\definecolor{webgreen}{rgb}{0,.5,0}
\definecolor{webbrown}{rgb}{.8,0,0}
\definecolor{emphcolor}{rgb}{0.5,0.95,0.95}
\ifpdf \hypersetup{pdftex,
	pdfstartview=FitH, 
	bookmarksopen=true,
	bookmarksnumbered=true
} \else \hypersetup{dvips} \fi
\newcommand{\red}{\textcolor[rgb]{1.00,0.00,0.00}}
\newcommand{\green}{\textcolor[rgb]{0.00,0.70,0.00}}
\newtheorem{theo}{Theorem}
\newtheorem{mydef}{Definition}
\newtheorem{lemma}{Lemma}
\newtheorem{prop}{Proposition}
\newtheorem{remark}{Remark}
\newtheorem{assum}{Assumption}
\newcommand{\E}{\mathbb{E}}
\newcommand{\Eq}{\mathcal{E}}
\newcommand{\D}{\mathcal{D}}
\newcommand{\R}{\mathbb{R}}
\newcommand{\Prob}{\mathbb{P}}
\newcommand{\T}{\mathcal{T}}
\newcommand{\V}{\mathcal{V}}
\newcommand{\Ind}{\mathbb{I}}
\newcommand{\lev}{L\'{e}vy }
\begin{document}
	\title[Effects of positive jumps of assets on endogenous bankruptcy and optimal capital structure]{Effects of positive jumps of assets on endogenous bankruptcy and optimal capital structure: Continuous- and periodic-observation models}
	
	\author[D. Mata]{Dante Mata L\'opez$^*$}

\author[J.L. P\'erez]{Jos\'e Luis P\'erez$^*$}\thanks{$^*$Department of Probability and Statistics, Centro de Investigaci\'on en Matem\'aticas, A.C. Calle Jalisco S/N
C.P. 36240, Guanajuato, Mexico,
email: dante.mata@cimat.mx, jluis.garmendia@cimat.mx}

\author[K. Yamazaki]{Kazutoshi Yamazaki$^\dagger$}\thanks{$^\dagger$Department of Mathematics,
Faculty of Engineering Science, Kansai University, 3-3-35 Yamate-cho, Suita-shi, Osaka 564-8680, Japan,
email: kyamazak@kansai-u.ac.jp}
\thanks{ K. Yamazaki is partially supported by MEXT KAKENHI grant no.\ 19H01791 and 	20K03758.}

\maketitle

\begin{abstract} 


In this paper, we study the optimal capital structure model with
endogenous bankruptcy when the firm's asset value follows an exponential \lev process with positive jumps. In the Leland-Toft framework \cite{LelandToft96}, we obtain the optimal bankruptcy barrier in the classical continuous-observation model and the periodic-observation model, recently studied by Palmowski et al.\ \cite{palmowski2019leland}. We further consider the two-stage optimization problem of obtaining the optimal capital structure.  Detailed numerical experiments are conducted to study the sensitivity of the firm's decision-making with respect to the observation frequency and positive jumps of the asset value.

\noindent \small{\noindent  {AMS 2020} Subject Classifications: 60G40,  60G51,  91G40 
	\\
JEL Classifications:  C61, G32, G33 
\\
\textbf{Keywords:} Credit risk, endogenous bankruptcy, optimal capital structure, spectrally positive \lev processes}
\end{abstract}


\section{Introduction}

The classical Leland model \cite{Leland94} studies the decision-making faced by a firm on the determination of the time of bankruptcy and capital structures. The model features the 
\emph{endogenous bankruptcy} determined to solve the trade-off between maximizing the tax benefits and minimizing the bankruptcy costs. The Leland-Toft model \cite{LelandToft96}  is an extension of the Leland model  \cite{Leland94}, that successfully avoids the use of perpetual bonds by considering a particular debt profile.  It is regarded as one of the most important models in the fields of both corporate finance and credit risk. 

%

In this paper, we focus on the  effects of positive jumps of the firm's asset value  by considering a spectrally positive \lev process (
a \lev process with only positive jumps) in the Leland-Toft model.
In the past, the classical geometric Brownian motion model has been generalized successfully to several exponential \lev models. 
With the motivation of avoiding an undesirable conclusion drawn in the geometric Brownian motion model that the credit spread approaches zero as the maturity decreases, a majority of papers have focused on analyzing the effects of negative jumps of the firm's asset price process (see Hilberink and Rogers \cite{HilbRog02}, Kyprianou and Surya \cite{Kyprianou2007}, and Surya and Yamazaki \cite{surya2014optimal}). On the other hand, as discussed by Chen and Kou \cite{ChenKou09}, who considered the double jump diffusion (with i.i.d.\ exponential jumps in both directions), positive jumps in the asset value also have significant effects on the optimal capital structure and the credit spread. In this paper, we revisit the study of analyzing the influence of positive jumps by incorporating several features that are not considered in \cite{ChenKou09} and other related studies.
%
%

We consider the classical continuous-observation and periodic-observation models.
 In the continuous-observation model, the bankruptcy time is modeled by the first time the firm's asset value
$(V_t)_{t \geq 0}$, 
 modeled by an exponential spectrally positive \lev process, 
 goes below a barrier $V_B$:
\begin{align}
\inf \{ t > 0: V_{t} < V_B \}. \label{our_default_continuous}
\end{align}
The periodic-observation model, recently studied by Palmowski et al.\ \cite{palmowski2019leland} considers the scenario in which the asset value information is updated only at epochs 
$\mathcal{T} = (T_n^\lambda)_{n \geq 1}$, 
 given by the jump times of an independent Poisson process $N^\lambda = (N^\lambda_t)_{t \geq 0}$ with a fixed rate $\lambda$.  The bankruptcy time is modeled as the first observation time at which the asset value process is below $V_B$, namely, 
\begin{align*}
\inf \{ T^\lambda \in \mathcal{T}: V_{T^\lambda} < V_B \}. 
\end{align*}
As noted in \cite{palmowski2019leland}, this is also written as the classical bankruptcy time \eqref{our_default_continuous} with $(V_t)_{t \geq 0}$  replaced by  the asset value if it is only updated at 
$\mathcal{T}$, namely,
\begin{align}
V^\lambda_t := V_{T^\lambda_{N^\lambda_t}}, \quad t \geq 0, \label{V_lambda}
\end{align}
where $T^\lambda_{N^\lambda_t}$ is the last observation time before $t$.
In Figure \ref{plot_simulated}, we plot sample paths of $(V_t)_{t \geq 0}$,  $(V_t^\lambda)_{t \geq 0}$, $T^\lambda$ and the corresponding bankruptcy time to illustrate these. We refer the reader to \cite{palmowski2019leland} for the detailed discussions on this model, in particular, regarding the connections with the reduced-form model and other models such as those by Duffie and Lando \cite{duffie2001term} and Fran{\c{c}}ois and Morellec  \cite{franccois2004capital}. See also  \cite{avanzi2013periodic,avanzi2014optimal,dupuis2002optimal, noba2018optimal, perez2018optimal,perez2018american} for optimal stopping and other stochastic control problems under Poisson observations.

 \begin{figure}[htbp]
\begin{center}
\begin{minipage}{1.0\textwidth}
\centering
\begin{tabular}{c}
 \includegraphics[scale=0.5]{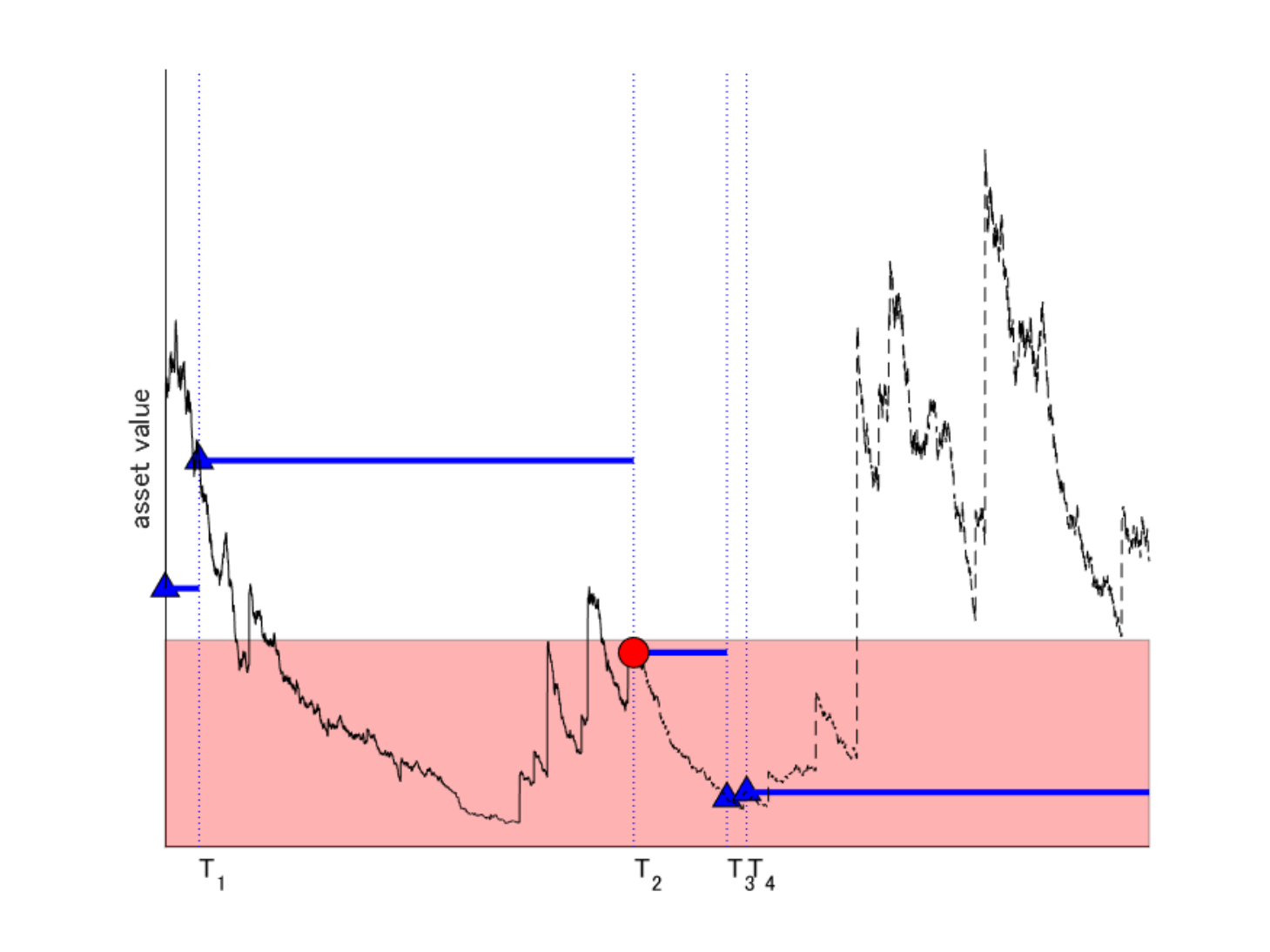}
\end{tabular}
\caption{Illustration of bankruptcy in the periodic-observation case, with sample paths of $(V_t)_{t \geq 0}$ (black lines) and $(V_t^\lambda)_{t \geq 0}$ as in \eqref{V_lambda}  (horizontal blue lines) together with the observation times $T^\lambda$  \red{$\mathcal{T}$?} (indicated by dotted vertical lines). The red rectangular region shows the bankruptcy zone $(0, V_B)$.
The asset value at  bankruptcy is indicated by the red circle and those at other observation times are indicated by blue triangles.  Here, bankruptcy occurs at $T_2^\lambda$, but the asset value has crossed $V_B$ before and then recovered back before $T_2^\lambda$.
It is also noted that the process $(V_t^\lambda)_{t \geq 0}$ jumps downward at $T_2^\lambda$ and $T_3^\lambda$ (while the original asset value $(V_t)_{t \geq 0}$ only contains positive jumps).
}  \label{plot_simulated}
\end{minipage}
\end{center}
\end{figure}

 In the periodic-observation model, as illustrated in Figure \ref{plot_simulated}  (see in particular the interval between $T_1^\lambda$ and $T_2^\lambda$), the asset value can stay below $V_B$ between the observation times. It is of  particular interest to study the event that the asset value recovers from the ``bankruptcy zone'' $(0,V_B)$ to the ``non-bankruptcy zone'' $[V_B, \infty)$  before the next observation time, and how the prospect of these events has effects on the firm's decision-making. In the case of  \cite{palmowski2019leland}, where they focus on the spectrally negative case, this event can happen only by continuous increments. Its probability is thus significantly underestimated due to the assumption of no positive jumps.  When positive jumps are allowed, the firm is expected to have a strong possibility for recovery. Thus, it is important to consider an asset value with positive jumps and analyze how these affect the firm's decision-making.

 

We consider a general \lev process with positive jumps of any arbitrary distribution. Chen and Kou \cite{ChenKou09}  studied the case with exponential jumps in both directions. However, the exponential random variable has distinctive features that other random variables do not have.
For example, as studied by  Kou and Wang \cite{kou2003first}, double exponential diffusion admits the \emph{memoryless} property that the distribution of the overshoot at up- or down-crossing times is conditionally the same as that of the original jumps.
It is also in the class of distributions with \emph{completely monotone} density, which are known to possess certain special properties.  In  \cite{palmowski2019leland}, in the two-stage optimization problem (described subsequently), a completely monotone assumption on negative jumps was imposed to show the convexity of the firm value with respect to the leverage.  Completely monotone assumptions of the \lev measure are often assumed for the optimality of a barrier strategy in other stochastic control problems (see, e.g., Loeffen \cite{loeffen2008optimality} for the optimal dividend problem). For these reasons, it is important to consider jumps with arbitrary distributions to analyze whether the same results hold as in \cite{ChenKou09}. Another feature we consider in this paper, which was not considered in \cite{ChenKou09},  is the tax-benefit threshold. To avoid the overestimation of the tax benefits, as suggested in  
Hilberink and Rogers \cite{HilbRog02}, we consider the cases in which tax benefits can be enjoyed only if the asset value is above a given threshold $V_T$.

%
%
%
%

To solve both continuous-observation and periodic-observation cases, we use the \emph{fluctuation theory} of \lev processes. For the periodic-observation case, we use the recent results by Albrecher et al.\ \cite{albrecher2016} and Landriault et al. \cite{LANDRIAULT2018152}.
The firm/debt/equity values are expressed in terms of the so-called \emph{scale 
function}, which is defined for a general spectrally one-sided \lev process.  
In the continuous-observation model, the bankruptcy level satisfies the \emph{smooth fit} condition. On the other hand, in the periodic-observation model, the bankruptcy level satisfies the \emph{continuous fit} condition. The optimal capital structure is obtained by solving the \emph{two-stage optimization problem} as proposed in \cite{LelandToft96}.  We show the uniqueness of the optimal leverage for a general spectrally positive \lev process (including the cases in which the jump size distribution does not have a completely monotone density). 

Using these analytical results, we conduct a series of numerical experiments using the case driven by a mixture of Brownian motion and i.i.d.\ positive (folded) normal distributed jumps, whose scale function is approximated by fitting phase-type \lev processes that admit an explicit form of the scale function. We verify the optimality of a selected bankruptcy level and then study the effects of the rate of observations and positive jumps.  For the former, we confirm the monotonicity of the optimal bankruptcy level $V_B^*$ and its convergence to that in the continuous-observation case.  For the latter, as conjectured above, positive jumps have significant effects on the optimal strategies.   Interestingly, our numerical results reveal that $V_B^*$ is \emph{not}  monotone in the parameter of the jumps.

%

While we focus on the one-sided jump case, as discussed in  \cite{palmowski2019leland}, the periodic-observation case can be considered to be a case driven by processes with two-sided jumps (due to the equivalence of the bankruptcy time with the classical bankruptcy time \eqref{our_default_continuous} of the process $(V_t^\lambda)_{t \geq 0}$; see also Figure \ref{plot_simulated}).
While only a few models featuring  asset value processes with two-sided jumps exist, we provide a new analytically tractable case for $(V_t^\lambda)_{t \geq 0}$, that contains two-sided jumps even when  $(V_t)_{t \geq 0}$ does not have positive jumps.  By suitably selecting the process $(V_t)_{t \geq 0}$ and $\lambda$, a wide range of stochastic processes with two-sided jumps can be realized.

The rest of the paper is organized as follows. In Section \ref{Sec:Prelim}, we introduce the relevant mathematical aspects of the driving stochastic process used in the models. We also introduce the general setting of the capital structure model for a firm, and the equity maximization problem. In Sections \ref{Sec:Opt:Cont} and \ref{Sec:Opt:Per}, we derive the fluctuation identities for spectrally positive \red{\lev} processes under continuous and periodic observations, respectively, and show the existence of the optimal bankruptcy barriers that solve the Leland-Toft problem both in the continuous and periodic cases. 
Section \ref{Sec:TwoStage} considers the two-stage problem to obtain the optimal capital structure, where we aim to maximize 
the value of the firm in terms of the total face value of the debt. 
Section \ref{Sec:Num} 
presents numerical examples that illustrate the results derived in the earlier sections. We conclude the paper in Section \ref{Sec:Final}. Some proofs and a review of the fluctuation identities for the spectrally negative \lev process are given in the Appendix.

\section{Preliminaries}\label{Sec:Prelim}
Let $(\Omega,\mathcal{F},\Prob)$ be a complete probability space hosting a \lev process $X = (X_t)_{t\geq 0}$ with $X_0=0$. To be more precise, $X = (X_t)_{t\geq 0}$ is a real-valued stochastic process with independent and stationary increments, and with càdlàg sample paths such that $X_0 = 0$. 
In this work, we will work with \textit{spectrally positive} \lev processes, i.e., L\'evy processes without negative jumps. These processes are characterized by their \textit{Laplace exponent} $\psi: [0,\infty) \rightarrow \R$
\[
\E \left[ e^{-\theta X_t} \right] = e^{t\psi(\theta)}, \quad t,\theta \geq 0,
\]
where, by the \textit{L\'evy-Khintchine formula},
\begin{equation}\label{Laplace:Exponent}
\psi(\theta) := \gamma \theta + \frac{\sigma^2}{2}\theta^2 + \int_{(0,\infty)}\left( e^{-\theta z} - 1 + \theta z \Ind_{\lbrace z < 1 \rbrace} \right) \Pi(dz),
\end{equation}
where $\gamma\in\R, \, \sigma \geq 0$, and $\Pi$ is a measure, called the \textit{L\'evy measure}, supported on $(0,\infty)$ that satisfies $\int_{(0,\infty)} (1 \wedge z^2) \Pi(dz) < \infty.$

It is known that $X$ has paths of bounded variation if and only if $\sigma=0$ and $\int_{(0,1)} z \Pi(d z) < \infty$; in this case we can write
\[
X_t = ct + S_t, \quad t\geq 0,
\]
where $c := - \big(\gamma + \int_{(0,1)} z  \Pi(d z) \big)$ and $S = (S_t)_{t\geq 0}$ is a driftless subordinator. In order to avoid having processes with monotonous paths, we set $c < 0$ (when $X$ is of bounded variation). 
\subsection{Formulation of the Leland-Toft model} 
The value of the firm's asset is assumed to evolve according to an exponential L\'evy process given by
\[
V_t := V e^{X_t}, \quad t\geq 0,
\]
where the initial value $V$ is strictly positive. Let $r > 0$ be the risk-free interest rate and $0\leq  \delta < r$ the total payout rate to the firm's investors. 

The firm is partly financed by debt, which is being constantly retired and reissued in the following way: for some given constants $p,m >0$, the firm issues new debt at a constant rate with maturity profile $\phi(s):= m e^{-ms}$. Consequently, the face value of the debt issued in the time interval $(t,t+dt)$ that matures in the time interval $(t+s,t+s+ds)$ is given by $p\phi(s)dtds$. Assuming an infinite past, the face value of the debt held at time 0 and maturing in the interval $(s,s+ds)$ is
\[
\left[ \int_{-\infty}^0 p \phi(s-u) du \right] ds = p e^{-ms}ds,
\]
and hence the \textit{total value of the debt} is a constant given by
\[
P := p \int_0^{\infty} e^{-ms} ds = \frac{p}{m}.
\]

A bankruptcy event is determined \textit{endogenously} by the firm's shareholders, in terms of the firm's assets and a given set of times $\T \subseteq [0,\infty)$. Given an asset value level $V_B\geq 0$, bankruptcy is triggered at the first time the asset value is \textit{observed} to fall below this threshold. In other words, we define the time of bankruptcy as
\[
\sigma_{V_B}^- := \inf \lbrace t\in \T : V_t < V_B \rbrace.
\]
Here and for the rest of the paper, we set $\inf \emptyset = \infty$. We will study the case of \textit{continuous-observations}, i.e., when $\T = [0,\infty)$; as well as the case of \textit{periodic observations}, where $\T$ is the set of jump times of a time-homogeneous Poisson process with rate $\lambda > 0$ independent of $X$.

Upon bankruptcy, a fraction $\eta \in (0,1)$ of the asset value is lost due to re-organization and operational costs, while the remaining asset value is paid out to bondholders. We assume that the debt is of equal seniority, so that a fraction $\frac{1}{P}$ of the remaining asset value is distributed equally among bondholders.

The debt pays out coupons at a constant rate $\rho > 0$, which are accumulated until maturity or bankruptcy, whichever comes first. Note that the remaining coupon payments are lost when bankruptcy occurs. In this setting, the value of the debt with unit face value and maturity date $t>0$ is
\begin{equation}\label{LT:Debt}
d(V;V_B,t) := \E\left[ \int_0^{t\wedge \sigma_{V_B}^-} \rho e^{-r s}ds \right] + \E\left[ e^{-rt} ; t < \sigma_{V_B}^- \right] + \frac{1}{P} \E\left[ (1-\eta) e^{- r \sigma_{V_B}^-} V_{\sigma_{V_B}^-} ; \sigma_{V_B}^- \leq t \right].
\end{equation}
By integrating expression \eqref{LT:Debt} and using Fubini's Theorem, the \textit{total value of the debt} becomes
\begin{align}
\D(V;V_B) &:= \int_0^{\infty} p e^{-mt} d(V;V_B,t) dt \nonumber\\
 &= \E\left[ \int_0^{\sigma_{V_B}^-} e^{-(r+m)t}P(\rho + m) dt \right] + (1-\eta) \E\left[ e^{-(r+m) \sigma_{V_B}^-} V_{\sigma_{V_B}^-} ; \sigma_{V_B}^-<\infty \right].\label{Total_Debt}
\end{align}
\noindent We follow the \textit{trade-off} theory and we assume that the \textit{value of the firm} is determined by tax rebates on coupon payments, and by bankruptcy costs. Regarding the tax rebates, we assume that there is \green{a} corporate tax rate $\kappa > 0$ and a cutoff level $V_T \geq 0$ that determines the effect of tax rebates on coupons. Whenever the asset value is above this level, the tax rebates are accrued continuously at rate 
$\kappa \rho P$, otherwise these rebates are equal to 0. On the other hand, we know that bankruptcy costs are given by $\eta V_{\sigma_{V_B}^-}$. Under these assumptions the value of the firm is
\begin{equation}\label{Market_Value_Firm}
\V(V;V_B) := V + \kappa \rho P \E\left[ \int_0^{\sigma_{V_B}^-} e^{-rt} \Ind_{\lbrace V_t \geq V_T \rbrace} dt \right] - \eta \E\left[ e^{-r \sigma_{V_B}^-} V_{\sigma_{V_B}^-} ; \sigma_{V_B}^- < \infty \right].
\end{equation}
Finally, the value of the firm's equity is 
\begin{equation}\label{Defn_Equity_Value}
\Eq(V;V_B) = \V(V;V_B) - \D(V;V_B).
\end{equation}
The problem is to find the \textit{optimal bankruptcy threshold} $V_B^* \geq 0$ that maximizes the equity value subject to the \textit{limited liability constraint}
\begin{align}
\Eq(V;V_B) \geq 0, \quad V \geq V_B. \label{limited_liability}
\end{align}
\section{Optimal bankruptcy level under continuous observations}\label{Sec:Opt:Cont}
Our first subject of study will be solving the optimal capital structure problem under the assumption that the firm's assets are observed continuously. In this model the bankruptcy decision is taken instantaneously, as soon as the asset value falls below the level $V_B$, 
and hence the bankruptcy time is defined as
\begin{align*} 
\sigma_{V_B}^- := \inf\lbrace t\geq 0 : V_t < V_B \rbrace. 
\end{align*}
We derive expressions for the debt, firm, and equity values 
by using results from the \textit{Fluctuation Theory} for spectrally one-sided L\'evy processes. From these expressions, we will be able to analyze and solve the Leland-Toft optimal capital structure problem, i.e., finding the bankruptcy-triggering level that maximizes the equity value under the limited liability constraint \eqref{limited_liability}.
\subsection{Fluctuation identities for spectrally positive \lev processes}
We present some fluctuation identities for spectrally positive  \lev  processes, 
which will be given in terms of the so-called scale functions. 
For $x\in\R$, we denote the expectation operator associated with the spectrally positive L\'evy process $X$ started at $x$ by $\E_x$.

Recall the Laplace exponent of the spectrally positive  \lev  process given in \eqref{Laplace:Exponent}. We define its right-continuous inverse 
by:
\[ \Phi(q) := \sup \lbrace p>0: \psi(p) = q \rbrace, \quad q \geq 0. \]
\begin{mydef}[Scale function] For each $q\geq0$, the $q$-scale function $W^{(q)}:\R \rightarrow [0,\infty)$ is the unique function such that $W^{(q)}(x)=0$ for all $x<0$, and, on $[0,\infty)$, it is a strictly increasing and continuous function whose Laplace transform is given by
	\[ \int_0^{\infty} e^{-\lambda x} W^{(q)}(x) dx = \frac{1}{\psi(\lambda) - q}, \quad \lambda > \Phi(q). \]
	We also define the second scale function:
	\begin{equation} \label{2nd:Scale:Fn}
	Z^{(q)}(x;\theta) := e^{\theta x} \left( 1 + (q - \psi(\theta)) \int_0^x e^{-\theta z} W^{(q)}(z) dz \right), \quad x\in\R, \; \theta \geq 0.
	\end{equation}
	In particular, we write $Z^{(q)}(x) = Z^{(q)}(x;0)$.
\end{mydef}

\begin{remark}[Smoothness of the scale function] \label{remark_smoothness_scale_function} For the case where $X$ is of unbounded variation, $W^{(q)}(0) = 0$. On the other hand, when $X$ is of bounded variation, $W^{(q)}(0) > 0$. For both cases, the right-hand derivative of $W^{(q)}$ exists for all $x \in (0,\infty)$. See, Chapter 8 of \cite{Kyprianou2007} for more details and related results.
\end{remark}

For the spectrally positive \lev process $X$ we denote the first passage time below and above $0$ by
\begin{equation}\label{fptc} \tau_0^{-} := \inf \lbrace t>0: X_t <0 \rbrace, \qquad \tau_0^{+} := \inf \lbrace t>0: X_t >0 \rbrace.
\end{equation}
Due to the absence of negative jumps, we have $\mathbb{P}_x$-a.s.\ for $x \geq 0$,
\begin{equation}\label{creeping} X_{\tau_0^{-}} = 0. \end{equation}

\begin{lemma}\label{Lem:1st:FI} For all $x\geq0$ and $q \geq 0$, the Laplace transform of $\tau_0^{-}$ is given by 
	\begin{equation}\label{1st:Fluc:Id}
	\gamma(x;q):=\E_{x} \left[1 - e^{-q\tau_0^{-}} ; \tau_0^{-} < \infty \right] = 1-e^{- \Phi(q) x}.
	\end{equation}
\end{lemma}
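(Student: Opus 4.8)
The plan is to exploit the absence of negative jumps, which forces continuity (``creeping'') of $X$ at its first passage below $0$, as recorded in \eqref{creeping}. Because $X_{\tau_0^-} = 0$ on $\{\tau_0^- < \infty\}$ when started at $x \geq 0$, the quantity $\E_x[e^{-q\tau_0^-}; \tau_0^- < \infty]$ is exactly the Laplace transform of the first passage time of $X$ below level $0$ with no overshoot to account for. First I would reduce to the standard exponential-martingale computation: for $\theta \geq 0$, the process $(e^{-\theta X_t - t\psi(\theta)})_{t \geq 0}$ is a $\mathbb{P}_x$-martingale (this is immediate from the definition of the Laplace exponent $\psi$ and stationary independent increments), so that $\E_x[e^{-\theta X_t - t \psi(\theta)}] = e^{-\theta x}$ for all $t$.

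The key step is to apply optional stopping to this martingale at the bounded stopping time $\tau_0^- \wedge t$ and then pass to the limit $t \to \infty$. Evaluating at $X_{\tau_0^-} = 0$ on $\{\tau_0^- < \infty\}$ gives, formally,
\[
e^{-\theta x} = \E_x\!\left[ e^{-\theta \cdot 0 - \tau_0^- \psi(\theta)} ; \tau_0^- < \infty \right] + (\text{tail term on } \{\tau_0^- = \infty\}),
\]
i.e. $e^{-\theta x} = \E_x[e^{-\psi(\theta) \tau_0^-}; \tau_0^- < \infty]$ once the tail term is shown to vanish. To convert this into the stated formula one substitutes $\theta = \Phi(q)$, which by definition of the right-continuous inverse satisfies $\psi(\Phi(q)) = q$ for $q \geq 0$; this yields $\E_x[e^{-q\tau_0^-}; \tau_0^- < \infty] = e^{-\Phi(q)x}$, and subtracting from $1$ gives exactly \eqref{1st:Fluc:Id}. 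An alternative, equally clean route is to quote the scale-function identity for the spectrally \emph{negative} process $-X$ (the first passage \emph{upward} for $-X$ creeps over a level), for which the Laplace transform of the first passage time is the classical $e^{-\Phi(q)x}$; this is the form collected in the Appendix review of spectrally negative fluctuation theory, so one could simply cite it.

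The main obstacle is the justification of the limiting/optional-stopping argument: the martingale $e^{-\theta X_{t \wedge \tau_0^-} - (t \wedge \tau_0^-)\psi(\theta)}$ need not be uniformly integrable, and one must handle the event $\{\tau_0^- = \infty\}$ carefully. For $q > 0$ (equivalently $\psi(\theta) > 0$, i.e. $\theta > \Phi(0)$) the integrand $e^{-(t \wedge \tau_0^-)\psi(\theta)} \to 0$ on $\{\tau_0^- = \infty\}$ and is bounded, so dominated convergence applies directly and no uniform integrability is needed; the case $q = 0$ then follows either by monotone continuity in $q$ (letting $q \downarrow 0$ and using that $\Phi$ is right-continuous at $0$) or by a separate argument that $\{\tau_0^- < \infty\}$ has probability $e^{-\Phi(0)x}$. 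I would present the $q>0$ case in detail via the bounded-convergence argument and dispatch $q=0$ by continuity, or—most economically—simply invoke the corresponding known identity for the spectrally negative dual process from the Appendix.
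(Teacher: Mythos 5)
Your proposal is correct. Note, however, that your ``alternative, equally clean route'' is precisely the proof the paper gives: it passes to the dual $Y=-X$, rewrites $\E_x[e^{-q\tau_0^-};\tau_0^-<\infty]$ as $\tilde{\E}[e^{-q\tilde{\tau}_x^+};\tilde{\tau}_x^+<\infty]$, and cites the classical spectrally negative first-passage identity \eqref{Thm:Pass:Time:Cont} collected in the appendix. Your primary route --- optional stopping of the exponential martingale $e^{-\theta X_t - t\psi(\theta)}$ at $\tau_0^-\wedge t$, using creeping \eqref{creeping} to evaluate $X_{\tau_0^-}=0$, substituting $\theta=\Phi(q)$ so that $\psi(\Phi(q))=q$, and killing the tail term on $\{\tau_0^->t\}$ via the bound $e^{-\theta X_t - t\psi(\theta)}\le e^{-t q}$ (valid since $X_t\ge 0$ there and $\theta\ge 0$) --- is exactly the standard derivation of that cited identity, so it is a self-contained unpacking of the same fact rather than a genuinely different argument. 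What it buys is independence from the external reference at the cost of the convergence bookkeeping you correctly flag (the non-uniform integrability of the stopped martingale and the $q=0$ case, which you dispatch by letting $q\downarrow 0$ and using continuity of $\Phi$); the paper's version buys brevity by delegating all of this to \cite{kyprianou2014fluctuations}. Either is acceptable.
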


\noindent Note that $\gamma(\cdot;q)$ is differentiable on $(0,\infty)$, with its derivative given by 
\begin{equation}\label{der_gamma} \frac{\partial \gamma}{\partial x}(x;q) = \Phi(q) e ^{- \Phi(q) x}, \  x>0. 
\end{equation}
Moreover, $\gamma$ is right-differentiable at $0$, and its right-hand derivative is given by
\begin{align}
\frac{\partial \gamma}{\partial x}(0+;q) = \Phi(q).  \label{gamma_der_zero}
\end{align}

\begin{remark}
	Using \eqref{creeping} we have that for any $\beta\in\R$  and $x \geq 0$,
	\begin{align*}
	\E_{x} \left[1 - e^{-q\tau_0^{-} + \beta X_{\tau_0^{-}}} ; \tau_0^{-} < \infty \right]=\E_{x} \left[ 1 - e^{-q\tau_0^{-}} ; \tau_0^{-} < \infty \right]=\gamma(x;q).
	\end{align*}
\end{remark}

\noindent We also define the function $g(x;q,a)$ as 
\[ g(x;q,a) := \E_{x}\left[ \int_0^{\tau_0^-} e^{-qt} \Ind_{ \lbrace X_t \geq a \rbrace } dt \right], \quad x \geq 0,\, q\geq0,\, a\in\R. \]
Observe that $g(x;q,a)=0$ for all $x < 0$ since $\lbrace X_0 = x \rbrace$ implies that $\tau_0^-=0$ almost surely.
The following result gives an explicit expression for the function $g$.
\begin{lemma}\label{2nd:Fluc:Id_lemma} For $x \geq0,\,q\geq 0,$ and $a \in\R$ we have 
	\begin{equation*} 
	g(x;q, a) = \frac{1}{q}\left( 1 - e^{-\Phi(q) x} \right) + \overline{W}^{(q)}(a-x) - e^{-\Phi(q)x} \overline{W}^{(q)}( a),
	\end{equation*}
	where
	\[ \overline{W}^{(q)}( x) := \int_0^{ x}W^{(q)}(u)du, \quad x \in\R. \]
\end{lemma}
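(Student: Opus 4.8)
The plan is to reduce $g(x;q,a)$ to an integral of the $q$-resolvent density of $X$ killed at its first passage below $0$, and then to evaluate that integral explicitly with the help of Lemma~\ref{Lem:1st:FI}. The key input is the standard resolvent identity for spectrally one-sided \lev processes: for $x,y\ge 0$ the potential measure $\E_x\big[\int_0^{\tau_0^-}e^{-qt}\Ind_{\{X_t\in dy\}}\,dt\big]$ is absolutely continuous with density
\[
u^{(q)}(x,y)=e^{-\Phi(q)x}W^{(q)}(y)-W^{(q)}(y-x),\qquad x,y\ge 0
\]
(see Chapter 8 of \cite{Kyprianou2007}; alternatively this follows by passing to the spectrally negative dual $-X$, for which $\tau_0^-$ becomes the first passage strictly above $0$, and applying the classical one-sided exit resolvent formula). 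Since $X_t\ge 0$ for every $t<\tau_0^-$ (indeed $X_{\tau_0^-}=0$ by \eqref{creeping}), Tonelli's theorem gives $g(x;q,a)=\int_{[0,\infty)\cap[a,\infty)}u^{(q)}(x,y)\,dy$.

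Next comes the explicit computation. When $a\le 0$ the domain of integration is all of $[0,\infty)$, so $g(x;q,a)=\int_0^\infty u^{(q)}(x,y)\,dy=\E_x\big[\int_0^{\tau_0^-}e^{-qt}\,dt\big]=\frac1q\,\E_x[1-e^{-q\tau_0^-}]=\frac1q\big(1-e^{-\Phi(q)x}\big)$ by Lemma~\ref{Lem:1st:FI}, and this matches the claimed expression because $\overline{W}^{(q)}(a-x)=\overline{W}^{(q)}(a)=0$ for $a\le 0\le x$. When $a>0$ I would write $\int_a^\infty u^{(q)}(x,y)\,dy=\int_0^\infty u^{(q)}(x,y)\,dy-\int_0^a u^{(q)}(x,y)\,dy$; the first term is again $\frac1q(1-e^{-\Phi(q)x})$, and, using $\int_0^a W^{(q)}(y)\,dy=\overline{W}^{(q)}(a)$ together with the substitution $u=y-x$ and the fact that $W^{(q)}\equiv 0$ on $(-\infty,0)$,
\[
\int_0^a u^{(q)}(x,y)\,dy=e^{-\Phi(q)x}\,\overline{W}^{(q)}(a)-\int_{-x}^{a-x}W^{(q)}(u)\,du=e^{-\Phi(q)x}\,\overline{W}^{(q)}(a)-\overline{W}^{(q)}(a-x),
\]
the last equality holding whether $a>x$ or $a\le x$ (in the latter case both sides vanish). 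Subtracting the two pieces yields exactly the asserted formula, which is moreover consistent with the case $a\le 0$ in the limit $a\downarrow 0$.

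The only genuinely nontrivial ingredient is the resolvent density in the first step; everything afterwards is bookkeeping with the sign of $a-x$ and the convention $W^{(q)}=0$ on the negative half-line. If one prefers to avoid quoting the resolvent density, the identity can instead be derived by a strong Markov argument applied to $k(x):=g(x;q,0)-g(x;q,a)$, the expected discounted time spent in $[0,a)$ before $\tau_0^-$: for $x\ge a$ the process must creep down to $a$ first, giving $k(x)=e^{-\Phi(q)(x-a)}k(a)$ via Lemma~\ref{Lem:1st:FI}, while for $0\le x<a$ one decomposes at the first exit time of $[0,a)$ (either $\tau_0^-$, reached by creeping, or an upcrossing of $a$); this route, however, is longer because of the overshoot above $a$, so I would favour the resolvent computation.
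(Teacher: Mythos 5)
Your proof is correct and follows essentially the same route as the paper's: both compute $g$ by writing the total discounted time $\tfrac1q(1-e^{-\Phi(q)x})$ via Lemma~\ref{Lem:1st:FI} and subtracting the discounted occupation of the complementary set, evaluated with the one-sided exit resolvent density (which the paper quotes for the spectrally negative dual and you quote directly for $X$ killed at $\tau_0^-$ — the same identity). The only differences are cosmetic: you make the case split in $a$ and the sign of $a-x$ explicit, whereas the paper absorbs it into the convention $W^{(q)}\equiv 0$ on $(-\infty,0)$.
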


\noindent Note that this function $g(\cdot;q, a)$ is also differentiable on $(0,\infty) \backslash \{a\}$ 
\begin{equation}\label{der_g} 
 \frac{\partial g}{\partial x}(x;q, a) = e^{-\Phi(q) x} \frac{\Phi(q)}{q} +  \Phi(q)e^{-\Phi(q) x} \overline{W}^{(q)}(a) - W^{(q)}(a- x). 
\end{equation}
It is also right-differentiable at $0$ with right-hand derivative given by
\begin{align} 
\frac{\partial g}{\partial  x}(0+;q, a) = \frac{\Phi(q)}{q}  +  \left(\Phi(q) \overline{W}^{(q)}(a) -  W^{(q)}(a)\right)\Ind_{\{a > 0\}}. \label{g_derivative_zero}
\end{align}
\subsection{Determining the optimal bankruptcy level}\label{SubSec:Opt:Cont}
Recall that in the context of the Leland-Toft model, 
we are seeking to maximize the expected equity value upon the event of bankruptcy. 

Suppose $V_B > 0$ and $V_T > 0$.
When $V \leq V_B,$ 
we have that $\sigma_{V_B}^- = 0$, and hence $\D(V;V_B) = \V(V;V_B) = (1 -\eta) V$ implying $\mathcal{E}(V;V_B) = 0$. Hence, 
throughout the rest of the section we assume $V \geq V_B$.
Based on the calculations in Lemmas \ref{Lem:1st:FI} and \ref{2nd:Fluc:Id_lemma}, and using the fact that $V_{\sigma_{V_B}^-}=V_B$ on $\{\sigma_{V_B}^- < \infty \}$ since $(V_t)_{t \geq 0}$ has no negative jumps, we can express the value of the firm and the total value of the debt as 
\begin{align}
\mathcal{V}(V;V_B) &= V + \kappa \rho P g \left( \log \frac{V}{V_B} ;r, \log \frac{V_T}{V_B} \right) - \eta V_B \left[ 1-\gamma \left( \log \frac{V}{V_B} ;r \right) \right], \label{Mkt:Val}\\
\mathcal{D}(V;V_B) &= \frac{(\rho + m)P}{r+m} \gamma \left( \log \frac{V}{V_B} ;r+m \right) +  (1-\eta)V_B \left[1 - \gamma \left( \log \frac{V}{V_B} ;r+m \right)\right].\label{Debt:Val}
\end{align}
 Hence, the equity value is given by 
\begin{equation}\label{Eq:Val}
\begin{split}
\Eq(V; V_B) &= V - V_B + V_B \left(\eta \gamma \left( \log \frac{V}{V_B} ;r \right) + (1-\eta)\gamma \left( \log \frac{V}{V_B} ;r+m \right) \right) \\
& \qquad + \left[ \kappa \rho  g \left( \log \frac{V}{V_B} ;r, \log \frac{V_T}{V_B} \right) - \frac{(\rho + m)}{r+m} \gamma \left( \log \frac{V}{V_B} ;r+m \right) \right] P.
\end{split}
\end{equation}
In particular, when $V_T=0$ we have $g\left( \log \frac{V}{V_B} ;r, \log \frac{V_T}{V_B}\right) = \frac{1}{r} \gamma\left( \log \frac{V}{V_B} ;r\right)$, and the equity value is given by 
\begin{equation}\label{Eq:Val:VT_0}
\begin{split}
\Eq(V;V_B) &= V - V_B + V_B \left( \eta \gamma\left( \log \frac{V}{V_B} ;r \right) + (1-\eta)\gamma\left( \log \frac{V}{V_B} ;r+m \right)\right) \\
&\qquad + \left[ \frac{\kappa \rho }{r} \gamma\left( \log \frac{V}{V_B} ;r \right) - \frac{(\rho + m)}{r+m} \gamma\left( \log \frac{V}{V_B} ;r+m\right) \right] P.
\end{split}
\end{equation} 
For the case $V_B = 0$, the equity value is given later in \eqref{Eq:Val:VB_0} (as we see immediately below, when the optimal barrier is zero, necessarily $V_T = 0$).

\begin{remark} \label{remark_E_V_B_zero} Since 
	 $\gamma$ and $g$ satisfy 
$\gamma(0+;q) =0$ and $g(0+;q,a) =0$, 
we have that $\Eq(V_B+; V_B) =0$ for all $V_B > 0$.
\end{remark}
\noindent Our aim is to show that $V_B^*$ satisfying 
\begin{equation}\label{Smt:Pst:Cont}
V_B^* = \inf \left\{ V_B > 0:  \frac{\partial \Eq}{\partial V}(V_B+; V_B) > 0 \right\},
\end{equation}
is optimal if such level exists. 
First, by \eqref{Eq:Val} and \eqref{Eq:Val:VT_0}, together with \eqref{gamma_der_zero} and \eqref{g_derivative_zero},
\begin{equation}\label{Aux:Deriv}
\frac{\partial \Eq}{\partial V}(V_B+; V_B) = \frac{
C
}{V_B} f(V_B),
\end{equation}
where 
\begin{align}
C &:= 1 + \eta\Phi(r) + (1-\eta) \Phi(r+m),  \nonumber\\	
\label{Aux:FP}
f(v) &:= v - \left(  K + \Ind_{\{V_T > 0\}} \kappa\rho \big(W^{(r)}(\log(V_T/v)) - \Phi(r)\overline{W}^{(r)}(\log(V_T/v)) \big)\right) \frac{P}{C}, \quad v > 0,
\end{align}
with
\begin{equation}\label{LT:Cont:K} 
K := \frac{\rho +m}{r+m}\Phi(r+m) - \frac{\kappa\rho }{r}\Phi(r).
\end{equation}
Using that $\eta \in [0,1]$ and $\Phi(r), \Phi(r+m) > 0$ we note that $C > 0$.


In other words, condition \eqref{Smt:Pst:Cont} is equivalent to
\begin{equation}\label{Cont:Equiv:FP}
V_B^*=\inf\{V_B>0:f(V_B) >0\}.
\end{equation}
\begin{remark}\label{Remark_f}
By Remark \ref{remark_smoothness_scale_function} we have that the function $f$ given in \eqref{Aux:FP}, is continuous on $(0,\infty)$ for the case $V_T=0$, or when the process $X$ is of unbounded variation. In these cases, condition \eqref{Cont:Equiv:FP} is equivalent to finding $V_B^*$ such that $f(V_B^*)=0$ or equivalently $\frac{\partial \Eq}{\partial V}(V_B^*+; V_B^*)=0$ (smooth fit). In general, when $V_B^* \neq V_T$, $V_B^*$  satisfies the smooth fit condition. 
\end{remark}


We will find conditions under which the level satisfying \eqref{Cont:Equiv:FP} exists and is unique. 
\begin{lemma}\label{lemma_f} The function $f$ is strictly increasing on $(0,\infty)$ with
\begin{align*}
\lim_{v\downarrow 0}f(v)&=
\begin{cases} - \frac{\rho +m}{r+m}\frac{\Phi(r+m)}{C} P &\mbox{if } V_T>0, \\
 -\frac{ K}{C}P & \mbox{if } V_T=0,
\end{cases}\\
\lim_{v\uparrow\infty} f(v) &= \infty.
\end{align*}
\end{lemma}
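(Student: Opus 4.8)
The plan is to analyze the function $f$ in \eqref{Aux:FP} directly by splitting into the two cases $V_T = 0$ and $V_T > 0$. In the case $V_T = 0$, the bracketed indicator term vanishes, so $f(v) = v - KP/C$ is manifestly strictly increasing (slope $1$), with $\lim_{v\downarrow 0} f(v) = -KP/C$ and $\lim_{v\uparrow\infty} f(v) = \infty$. This disposes of that case immediately; no fluctuation theory is needed beyond the definitions of $C$ and $K$. The substance of the lemma therefore lies entirely in the case $V_T > 0$.

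For $V_T > 0$, write $f(v) = v - (K + \kappa\rho\, h(v))\tfrac{P}{C}$ where $h(v) := W^{(r)}(\log(V_T/v)) - \Phi(r)\overline{W}^{(r)}(\log(V_T/v))$. Since $W^{(r)}$ vanishes on $(-\infty,0)$, when $v \geq V_T$ we have $h(v) = 0$ and $f(v) = v - KP/C$ is strictly increasing there. The real work is to show $f$ is strictly increasing on $(0, V_T)$ and continuous across $v = V_T$ (where $W^{(r)}$ may jump if $X$ is of bounded variation — note $W^{(r)}(0+) > 0$ in that case, so $h$ has a one-sided limit $-\,W^{(r)}(0+)$ as $v \uparrow V_T$, hence $f$ itself is continuous since $\overline W^{(r)}(0) = 0$ but the $W^{(r)}$ term does not vanish; I should double-check whether the claim of strict monotonicity on all of $(0,\infty)$ survives this jump, or whether $f$ is merely increasing with a possible upward jump at $V_T$ — in any event the sign-change analysis only needs strict monotonicity on each of $(0,V_T)$ and $[V_T,\infty)$ plus the one-sided limits). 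On $(0, V_T)$, substitute $x = \log(V_T/v) \in (0,\infty)$, so $v = V_T e^{-x}$ and $\frac{dv}{dx} = -v < 0$; it suffices to show $\frac{d}{dx}\big[ V_T e^{-x} - \kappa\rho\tfrac{P}{C}\big(W^{(r)}(x) - \Phi(r)\overline W^{(r)}(x)\big)\big] < 0$. Using $(\overline W^{(r)})'(x) = W^{(r)}(x)$ and the right-derivative of $W^{(r)}$, this derivative equals $-V_T e^{-x} - \kappa\rho\tfrac{P}{C}\big(W^{(r)\prime}(x) - \Phi(r) W^{(r)}(x)\big)$, so I need $W^{(r)\prime}(x) \geq \Phi(r) W^{(r)}(x)$ for $x > 0$. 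This is the crux.

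The key inequality $W^{(r)\prime}(x) \geq \Phi(r)\,W^{(r)}(x)$ for all $x > 0$ is a standard fact about scale functions: the function $x \mapsto e^{-\Phi(r)x} W^{(r)}(x)$ is non-decreasing on $(0,\infty)$. I would cite this (it follows, e.g., from the representation $W^{(r)}(x) = e^{\Phi(r)x} W_{\Phi(r)}(x)$ where $W_{\Phi(r)}$ is the $0$-scale function under the Esscher-transformed measure $\mathbb{P}^{\Phi(r)}$, and $0$-scale functions are non-decreasing; see Chapter 8 of \cite{Kyprianou2007}). Granting this, the $x$-derivative above is strictly negative (strictly, because of the $-V_T e^{-x}$ term, which never vanishes), so $f$ is strictly increasing in $v$ on $(0,V_T)$. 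The two limits then follow: as $v \downarrow 0$, $x = \log(V_T/v) \uparrow \infty$, and one needs $\kappa\rho\, h(v) \to -K + \tfrac{\rho+m}{r+m}\Phi(r+m)\cdot\tfrac{C}{P}\cdot(\text{something})$ — more precisely one computes $\lim_{x\to\infty}\big(W^{(r)}(x) - \Phi(r)\overline W^{(r)}(x)\big)$; since $W^{(r)}(x) \sim C_\Phi e^{\Phi(r)x}$ and $\overline W^{(r)}(x) \sim C_\Phi e^{\Phi(r)x}/\Phi(r)$ with the same constant $C_\Phi = 1/\psi'(\Phi(r))$, the leading exponential terms cancel and the limit is finite; chasing the lower-order term and combining with $K$ and the definition \eqref{LT:Cont:K} yields exactly $\lim_{v\downarrow 0} f(v) = -\tfrac{\rho+m}{r+m}\tfrac{\Phi(r+m)}{C}P$. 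As $v \uparrow \infty$, $h(v) = 0$ eventually and $f(v) = v - KP/C \to \infty$.

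**The main obstacle** I anticipate is handling the asymptotics $\lim_{x\to\infty}\big(W^{(r)}(x) - \Phi(r)\overline W^{(r)}(x)\big)$ cleanly: the two terms are each exponentially large and cancel at leading order, so one must extract the next-order behaviour of $W^{(r)}$, which requires either a second-order asymptotic expansion of the scale function or, more elegantly, rewriting $W^{(r)}(x) - \Phi(r)\overline W^{(r)}(x) = e^{\Phi(r)x}\frac{d}{dx}\big(e^{-\Phi(r)x}\overline W^{(r)}(x)\big)$ and using $\int_0^\infty e^{-\Phi(r)x}W^{(r)}(x)\,dx$-type Laplace-transform identities (integration by parts against the transform $\tfrac{1}{\psi(\lambda)-r}$ evaluated near $\lambda = \Phi(r)$). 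A secondary technical point is the behaviour at $v = V_T$ when $X$ is of bounded variation: strictly speaking one should state monotonicity piecewise and verify the one-sided limits match up (they do, since the $\overline W^{(r)}$ term is continuous and the $-V_T e^{-x}$/linear term is continuous, while the $W^{(r)}$ term only makes $f$ jump \emph{downward} as $v$ increases through $V_T$... — I would handle this carefully or, following Remark~\ref{Remark_f}, simply restrict the strict-monotonicity claim appropriately, which suffices for the subsequent existence/uniqueness argument).
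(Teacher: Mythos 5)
Your overall strategy matches the paper's: differentiate $f$ away from $V_T$, reduce strict monotonicity to the inequality $W^{(r)\prime}(x)\geq \Phi(r)W^{(r)}(x)$ (your Esscher-transform justification via $x\mapsto e^{-\Phi(r)x}W^{(r)}(x)=W_{\Phi(r)}(x)$ being non-decreasing is a valid substitute for the paper's observation that $W^{(r)\prime}-\Phi(r)W^{(r)}$ is the resolvent density of the descending ladder height process of $-X$), and then treat the point $v=V_T$ and the limit $v\downarrow 0$ separately. However, both of the points you flag as uncertain are genuine gaps as written.

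First, the behaviour at $v=V_T$: you assert that $h(V_T-)=-W^{(r)}(0+)$, that $f$ is continuous there, and that the jump is \emph{downward}. All three are wrong. Since $\overline{W}^{(r)}(0)=0$, one has $h(V_T-)=+W^{(r)}(0)\geq 0$, so
\[
f(V_T-)=V_T-\bigl(K+\kappa\rho W^{(r)}(0)\bigr)\tfrac{P}{C}\;\leq\; V_T-K\tfrac{P}{C}=f(V_T+),
\]
i.e.\ $f$ jumps \emph{upward} at $V_T$ (strictly, in the bounded-variation case). This sign is not a technicality you can defer: an upward jump is exactly what is needed to combine the piecewise strict monotonicity into strict monotonicity on all of $(0,\infty)$, which the lemma claims and which the later root/infimum characterization of $V_B^*$ relies on. A downward jump would falsify the statement. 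The paper verifies this one-line inequality explicitly; your proposal leaves it unresolved and tentatively points the wrong way.

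Second, the limit $\lim_{x\to\infty}\bigl(W^{(r)}(x)-\Phi(r)\overline{W}^{(r)}(x)\bigr)$, which you correctly identify as the main obstacle, is not actually computed: you observe that the leading exponentials cancel and assert that "chasing the lower-order term" gives the answer, but neither of your two suggested routes is carried out, and the second-order asymptotics of $W^{(r)}$ are not free. The paper avoids this entirely via the identity
\[
W^{(r)}(x)-\Phi(r)\overline{W}^{(r)}(x)=\frac{\Phi(r)}{r}\Bigl(1-\E_{-x}\bigl[e^{-r\tau_0^+};\tau_0^+<\infty\bigr]\Bigr),
\]
from which the limit $\Phi(r)/r$ as $x\to\infty$ follows by monotonicity of $z\mapsto\E_z[e^{-r\tau_0^+};\tau_0^+<\infty]$, and substitution into $f$ gives the stated constant $-\tfrac{\rho+m}{r+m}\tfrac{\Phi(r+m)}{C}P$. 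If you prefer a transform argument, note that the Laplace transform of $W^{(r)}-\Phi(r)\overline{W}^{(r)}$ is $\tfrac{\lambda-\Phi(r)}{\lambda(\psi(\lambda)-r)}$, and $\lambda$ times this tends to $\Phi(r)/r$ as $\lambda\downarrow 0$; combined with the monotonicity you already established, this identifies the limit. Either way, this step must be completed rather than sketched.
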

\begin{proof}
(i)
First, for $V_T > 0$, by differentiating \eqref{Aux:FP},
\[ f'(v) = 1+ \left(\frac{\kappa\rho P}{v}\right) \left( \frac{A(v)}{
C
} \right), \quad v \in (0,\infty) \backslash \{ V_T \}, \]
with
\begin{equation*} 
A(v) := W^{(r)'}(\log(V_T/v)) - \Phi(r) W^{(r)}(\log(V_T/v)),
\end{equation*}
where the term $W^{(r)\prime}$ is understood as the right-hand derivative of $W^{(r)}$, if it is not differentiable (see Remark \ref{remark_smoothness_scale_function}). 
The function $A(v)$ is non-negative because it is the $r$-resolvent density of the descending ladder height process of $-X$ 
(see Section 2.4 in Pistorius \cite{Pistorius2007}). 
It follows that
\begin{equation}\label{increasing_f_1} f'(v) > 0, \quad v \in (0,\infty) \backslash \{ V_T \}, \end{equation}
On the other hand, using \eqref{Aux:FP} we note that 
	\begin{equation}\label{increasing_f_2}
	f(V_T-)=V_T- \left(  K + 
	\kappa\rho W^{(r)}(0)
	 \right) \frac{P}{C} <V_T-\frac{KP}{
	C
	}=f(V_T+). 
	\end{equation}
Hence, identities \eqref{increasing_f_1} and \eqref{increasing_f_2} imply that $f$ is strictly increasing on $(0,\infty)$.

Following equation (42) in \cite{Kuznetsov2013}, we have 
\begin{align*}
\E_{\log(v/V_T)} \left[ e^{-r \tau_0^{+}} ; \tau_0^{+} < \infty \right] &= Z^{(r)}(\log(V_T/v)) - \frac{r}{\Phi(r)}W^{(r)}(\log(V_T/v)), 
\end{align*}
and hence  we have
\begin{align}
W^{(r)}(\log(V_T/v)) - \Phi(r)\overline{W}^{(r)}(\log(V_T/v)) &= \frac{\Phi(r)}{r} \left( 1 - \E_{\log(v/V_T)} \left[ e^{-r \tau_0^{+}} ; \tau_0^{+} < \infty \right] \right).\label{FI:1st:PT:Cont}
\end{align}
This allows us to simplify the expression for $f$ as 
\begin{equation}\label{FP:Alt}
f(v) = v- \left( \frac{\rho +m}{r+m}\Phi(r+m) - \kappa\rho \frac{\Phi(r)}{r} \E_{\log(v/V_T)} \left[ e^{-r \tau_0^{+}} ; \tau_0^{+} < \infty \right]\right) \frac{P}{C}.
\end{equation}

As $v \downarrow 0$, we have $\log(v/V_T) \downarrow -\infty$. On the other hand, we have that the mapping $z \mapsto \E_{z} \left[ e^{-r \tau_0^{+}} ; \tau_0^{+} < \infty \right]$ is non-negative, and monotone increasing  on $(-\infty,0)$, hence the limit as $z \downarrow -\infty$ exists, and it is equal to 0. 
Thus, by substituting 
this limit into the alternate form of $f$ given in \eqref{FP:Alt}, we get
\begin{align*} 
\lim_{v\downarrow 0}f(v) &= - \Phi(r+m)\left( \frac{\rho +m}{r+m}
\right) \frac{P}{C}.
\end{align*}
On the other hand, by noting that $ \log(V_T/v) \downarrow -\infty$ as $v \uparrow \infty$ and the fact that
\[ W^{(r)}(\log(V_T/v)) - \Phi(r)\overline{W}^{(r)}(\log(V_T/v)) =0,\qquad\text{for $V_T/v < 1$,} \]
we have
\[
\lim _{v\uparrow \infty} f(v) = \infty.
\]

(ii) Now, when $V_T = 0$  we have that $f$ is linear with unit slope on $(0,\infty)$, hence it is strictly increasing and $\lim _{v\uparrow \infty} f(v) = \infty$. 
Additionally, by \eqref{Aux:FP} we have 
\[
\lim_{v\downarrow 0}f(v)   =- \left(\frac{ KP}{
C
}\right).
\]
\end{proof}
In view of  Lemma \ref{lemma_f}, we propose our candidate for $V_B^*$ as follows.
\begin{enumerate}
	\item For the case $V_T>0$ or $V_T=0$ with $K> 0$ 
	we set $V_B^*>0$ such that $V_B^*=\inf\{V_B>0:f(V_B) >0\}$, 
	which exists by Lemma \ref{lemma_f}.
	\item For the case $V_T=0$ with $K \leq 0$  
	we set $V_B^*=0$.
\end{enumerate}
The equity value for the case $V_B^*>0$ can be computed using identity \eqref{Eq:Val:VT_0}. For the case 
where $V_T=0$ with $K \leq 0$, 
the equity value is given by 
	\begin{equation}\label{Eq:Val:VB_0}
	\Eq(V;0) = V + \Big( \frac{\kappa \rho }{r} - \frac{\rho + m}{r+m} \Big) P.
	\end{equation}
\subsection{Optimality of $V_B^*$}

In order to show that $V_B^*$ is the optimal barrier, it suffices to verify the following.
\begin{enumerate}
\item Any level $V_B$ below $V_B^*$ violates the limited liability condition  \eqref{limited_liability}.
\item $V_B^*$ achieves a higher equity value than any level $V_B > V_B^*$ does.
\item $V_B^*$ fulfills the limited liability condition  \eqref{limited_liability}.
\end{enumerate}
These are confirmed in the following three propositions.
\begin{prop}\label{Prop:NLLC:Cont} Suppose $V_B^* > 0$. For $V_B < V_B^*$, the limited liability constraint \eqref{limited_liability} is not satisfied.
\end{prop}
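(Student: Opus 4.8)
The plan is to show that for $V_B<V_B^*$ the equity value $\Eq(V;V_B)$ becomes negative for some $V\geq V_B$, thereby violating \eqref{limited_liability}. The natural place to look is near the boundary: by Remark \ref{remark_E_V_B_zero} we have $\Eq(V_B+;V_B)=0$, so the sign of $\Eq$ just above $V_B$ is governed by $\frac{\partial \Eq}{\partial V}(V_B+;V_B)$. By the identity \eqref{Aux:Deriv}, this derivative equals $\frac{C}{V_B}f(V_B)$ with $C>0$, so its sign is exactly that of $f(V_B)$. Since $f$ is strictly increasing on $(0,\infty)$ by Lemma \ref{lemma_f} and $V_B<V_B^*=\inf\{v>0:f(v)>0\}$, we get $f(V_B)\leq 0$, hence $\frac{\partial \Eq}{\partial V}(V_B+;V_B)\leq 0$.

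First I would treat the strict case $f(V_B)<0$: then $\frac{\partial \Eq}{\partial V}(V_B+;V_B)<0$, and combined with $\Eq(V_B+;V_B)=0$ this yields $\Eq(V;V_B)<0$ for $V$ slightly larger than $V_B$, immediately contradicting \eqref{limited_liability}. The remaining, more delicate case is $f(V_B)=0$, where the first-order information at the boundary is inconclusive and one must look further. Here I would use the explicit formula \eqref{Eq:Val:VT_0} (noting that $V_B^*>0$ forces the relevant regime, and if $V_T>0$ one argues on the interval $(V_B,V_T)$ where the $g$-term reduces appropriately, or one works directly with \eqref{Eq:Val}) to compute the second derivative $\frac{\partial^2 \Eq}{\partial V^2}(V_B+;V_B)$, or more robustly, to compare $\Eq(\cdot;V_B)$ with $\Eq(\cdot;V_B')$ for $V_B<V_B'\le V_B^*$ via a monotonicity-in-$V_B$ argument. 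A cleaner route: differentiate $\Eq(V;V_B)$ in $V$ using \eqref{der_gamma} and \eqref{der_g} to show that on a right-neighborhood of $V_B$ the map $V\mapsto \frac{\partial \Eq}{\partial V}(V;V_B)$ is itself increasing through the value $f$ evaluated at shifting arguments, so that $f(V_B)=0$ still forces $\Eq$ to dip below zero just above $V_B$ because the derivative is strictly negative on $(V_B,V_B+\varepsilon)$; indeed $\frac{\partial \Eq}{\partial V}(V;V_B)$ has the form (constant)$\times\frac{1}{V}\,\tilde f(V,V_B)$ where $\tilde f(V_B,V_B)=f(V_B)$ but $\tilde f$ decreases as $V$ increases slightly above $V_B$ when $V_B<V_B^*$, by the same scale-function positivity used in Lemma \ref{lemma_f}.

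The main obstacle I anticipate is precisely the borderline case $f(V_B)=0$ with $V_B<V_B^*$: since $f$ is strictly increasing, this can only occur if $f$ has a jump discontinuity at $V_B$, which by \eqref{increasing_f_2} happens exactly when $V_B=V_T$ and $X$ is of bounded variation (so $W^{(r)}(0)>0$). In that situation $f(V_T-)<0=f(V_T+)$ is impossible to combine with $V_B^*=\inf\{v:f(v)>0\}$ having $f(V_B^*)=0$; one must check carefully whether $V_B^*=V_T$ is even attained, and handle the left-continuity of $\Eq$ in $V_B$ at that point. I would dispatch this by observing that at $V_B=V_T$ the bankruptcy-zone behavior of the tax term changes, and a direct computation of $\Eq(V;V_T)$ for $V\in(V_T,V_T+\varepsilon)$ using \eqref{Eq:Val} shows the leading-order term is still negative because the contribution of the $g$-term derivative at $0+$ is dominated. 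In all cases the conclusion is that $\Eq(V;V_B)<0$ for some $V>V_B$, violating \eqref{limited_liability}; the argument is essentially a Taylor expansion at the boundary plus the strict monotonicity of $f$ established in Lemma \ref{lemma_f}.
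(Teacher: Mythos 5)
Your argument is correct and is essentially the paper's proof: $\Eq(V_B+;V_B)=0$, $\frac{\partial \Eq}{\partial V}(V_B+;V_B)=\frac{C}{V_B}f(V_B)<0$, and continuity of $\Eq(\cdot;V_B)$ together force $\Eq(V;V_B)<0$ for some $V>V_B$. The borderline case $f(V_B)=0$ that occupies most of your write-up is vacuous: since $f$ is \emph{strictly} increasing (Lemma \ref{lemma_f}), $f(V_B)\geq 0$ would give $f(v)>0$ for every $v>V_B$ and hence $V_B^*=\inf\{v>0:f(v)>0\}\leq V_B$, contradicting $V_B<V_B^*$; so $f(V_B)<0$ always holds strictly and no second-order or jump-discontinuity analysis is needed.
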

\begin{proof}
Let $V_B < V_B^*$. 
Combining \eqref{Aux:Deriv} and \eqref{Aux:FP} with the fact that $f$ is strictly increasing as shown in Lemma \ref{lemma_f}, and the fact that $C > 0$, we get
\[ \frac{\partial \Eq}{\partial V}(V_B+; V_B) < 0, \]
which, combined with the fact that $\Eq(V_B+; V_B) =0$ as in Remark  \ref{remark_E_V_B_zero} and the fact that $\Eq(\cdot;V_B)$ is continuous, leads to the conclusion that there exists $V > V_B$ such that $\Eq(V;V_B) <0$. 
\end{proof}
\begin{prop}\label{Prop:Optim:Cont} 
	For all $V > V_B > V_B^*$, we have $\mathcal{E}(V; V_B^*) \geq  \mathcal{E}(V; V_B)$. 
\end{prop}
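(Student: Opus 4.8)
The plan is to prove the stronger statement that, for each fixed $V>0$, the map $V_B\mapsto\Eq(V;V_B)$ is strictly decreasing on $(V_B^*,V)$. Granting this, since $\Eq(V;\cdot)$ is continuous on $(0,V]$ (the closed forms \eqref{Eq:Val}, \eqref{Eq:Val:VT_0} and the boundary value \eqref{Eq:Val:VB_0} are continuous in $V_B$), letting $v\downarrow V_B^*$ gives $\Eq(V;V_B^*)=\lim_{v\downarrow V_B^*}\Eq(V;v)\ge\Eq(V;V_B)$ for every $V_B\in(V_B^*,V)$, which is the assertion (in fact with strict inequality).

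To establish the monotonicity I would differentiate \eqref{Eq:Val} (or \eqref{Eq:Val:VT_0} when $V_T=0$) in $V_B$. Writing $x=\log(V/V_B)$ and $a=\log(V_T/V_B)$ (so $\partial x/\partial V_B=\partial a/\partial V_B=-1/V_B$ and $a-x=\log(V_T/V)$ is constant in $V_B$), and using \eqref{der_gamma}, \eqref{der_g} together with $\tfrac{\partial g}{\partial a}(x;r,a)=W^{(r)}(a-x)-e^{-\Phi(r)x}W^{(r)}(a)$ (whence the $W^{(r)}(a-x)$ terms cancel in $\tfrac{\partial g}{\partial x}+\tfrac{\partial g}{\partial a}$), the parts of $\partial\Eq/\partial V_B$ not proportional to $(V_B/V)^{\Phi(r)}$ or $(V_B/V)^{\Phi(r+m)}$ cancel ($-1+\eta+(1-\eta)=0$); collecting the rest and invoking the definitions of $C$, $K$, $f$ in \eqref{Aux:FP}--\eqref{LT:Cont:K}, I expect to obtain
$$
V_B\,\frac{\partial\Eq}{\partial V_B}(V;V_B)= -\,C\,f(V_B)\Big(\frac{V_B}{V}\Big)^{\Phi(r+m)} +G(V_B)\left[\Big(\frac{V_B}{V}\Big)^{\Phi(r+m)}-\Big(\frac{V_B}{V}\Big)^{\Phi(r)}\right],
$$
where
$$
G(V_B):=\eta\big(1+\Phi(r)\big)V_B+P\kappa\rho\Big(\frac{\Phi(r)}{r}+\Phi(r)\overline{W}^{(r)}(\log(V_T/V_B))-W^{(r)}(\log(V_T/V_B))\Big),
$$
which, by the simplification \eqref{FI:1st:PT:Cont}, equals $\eta(1+\Phi(r))V_B+\tfrac{P\kappa\rho\Phi(r)}{r}\,\E_{\log(V_B/V_T)}[e^{-r\tau_0^{+}};\tau_0^{+}<\infty]\ge 0$. (As a check, letting $V_B\uparrow V$ in the display recovers $\tfrac{\partial\Eq}{\partial V_B}(V;V^{-})=-\tfrac{C}{V}f(V)$, matching \eqref{Aux:Deriv} up to sign.)

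The conclusion is then immediate. Fix $V_B\in(V_B^*,V)$. Since $f$ is strictly increasing (Lemma \ref{lemma_f}) and $V_B^*=\inf\{v>0:f(v)>0\}$ (or $V_B^*=0$, in which case $K\le 0$ and $f>0$ on all of $(0,\infty)$), we have $f(V_B)>0$; with $C>0$ and $(V_B/V)^{\Phi(r+m)}>0$, the first term is strictly negative. In the second term $G(V_B)\ge 0$, while $0<V_B/V<1$ and $\Phi(r+m)>\Phi(r)>0$ (strict monotonicity of $\Phi$ and $r+m>r$) give $(V_B/V)^{\Phi(r+m)}<(V_B/V)^{\Phi(r)}$, so the bracket is negative and the term is $\le 0$. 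Hence $\tfrac{\partial\Eq}{\partial V_B}(V;V_B)<0$ at every point of $(V_B^*,V)$ where the derivative exists. By Remark \ref{remark_smoothness_scale_function}, $\Eq(V;\cdot)$ is $C^1$ on $(V_B^*,V)$ except possibly at $V_B=V_T$ (only when $X$ has bounded variation, through the jump of $W^{(r)}$ at $0$); there the same display, read with the one-sided values of $W^{(r)}(0)$, still gives two negative one-sided derivatives. A continuous function with negative derivative off a finite set is strictly decreasing, so $\Eq(V;\cdot)$ is strictly decreasing on $(V_B^*,V)$, completing the proof.

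The only genuine work is the algebra behind the displayed identity: one must verify that, after the cancellations, the coefficient of $(V_B/V)^{\Phi(r)}$ collapses to $-G(V_B)$ and that of $(V_B/V)^{\Phi(r+m)}$ to $-C f(V_B)+G(V_B)$, which is exactly where the combinations $C=1+\eta\Phi(r)+(1-\eta)\Phi(r+m)$, $K=\tfrac{\rho+m}{r+m}\Phi(r+m)-\tfrac{\kappa\rho}{r}\Phi(r)$ and the identity \eqref{FI:1st:PT:Cont} do their job; a secondary, purely technical point is the kink at $V_B=V_T$, handled via one-sided derivatives. (A less computational alternative applies the strong Markov property at the first passage of $(V_t)$ to $V_B$: by \eqref{creeping} one gets $\Eq(V;V_B^*)-\Eq(V;V_B)=\beta\,\Eq(V_B;V_B^*)+(\alpha-\beta)\big(\V(V_B;V_B^*)-(1-\eta)V_B\big)$ with $\alpha=(V_B/V)^{\Phi(r)}>\beta=(V_B/V)^{\Phi(r+m)}>0$; here $\V(V_B;V_B^*)-(1-\eta)V_B\ge 0$ is read off from \eqref{Mkt:Val} using $g\ge 0$, but $\Eq(V_B;V_B^*)\ge 0$ is the limited-liability property of $V_B^*$, so that route relies on the next proposition.)
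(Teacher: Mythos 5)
Your proposal is correct and follows essentially the same route as the paper: differentiating $\Eq(V;\cdot)$ in $V_B$, your displayed identity is exactly the paper's rearranged derivative (its equation \eqref{Opt:Aux3}, with the $\eta(1+\Phi(r))V_B$ and $\kappa\rho P\,B(V_B,V_T)$ terms absorbed into your single factor $G(V_B)$), and the sign analysis via Lemma \ref{lemma_f}, $C>0$, the identity \eqref{FI:1st:PT:Cont}, and the ordering $(V_B/V)^{\Phi(r+m)}<(V_B/V)^{\Phi(r)}$ is identical. The treatment of the kink at $V_B=V_T$ by one-sided derivatives plus continuity also matches the paper's argument.
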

\begin{proof}
Fix $V > V_B^*$. 
We will show that the mapping $V_B \mapsto \Eq(V;V_B)$ is monotonically decreasing on $[V_B^*,V]$.
By differentiating with respect to $V_B$, we obtain
\begin{equation}\label{Opt:Aux1}
\frac{\partial \gamma}{\partial V_B}\left( \log \frac{V}{V_B} ;q\right) = -\left(\frac{\Phi(q)}{V_B}\right) \left(\frac{V_B}{V}\right)^{\Phi(q)}, 
\end{equation}
and, for $V_B\not=V_T$, 
\begin{equation}\label{Opt:Aux2}
\frac{\partial g}{\partial V_B}\left( \log \frac{V}{V_B} ;q,\log \frac{V_T}{V_B} \right) = -\left( \frac{1}{V_B} \right) \left[ \frac{\Phi(q)}{q} -  \Ind_{\lbrace V_T > V_B \rbrace} \left( W^{(q)}\left( \log \frac{V_T}{V_B} \right) - \Phi(q)\overline{W}^{( q )}\left( \log \frac{V_T}{V_B}\right)  \right) \right]\left(\frac{V_B}{V}\right)^{\Phi(q)}. 
\end{equation}

 By applying expressions \eqref{Opt:Aux1} and \eqref{Opt:Aux2} in \eqref{Eq:Val}, we get for $V_B\not=V_T$
\begin{align}\label{derequity_v_b}
\frac{\partial \Eq}{\partial V_B} (V; V_B) &= -1 + \eta \gamma\left( \log \frac{V}{V_B} ;r\right) + (1-\eta) \gamma\left( \log \frac{V}{V_B} ;r+m\right)\notag\\&+ \left( \frac{\Phi(r+m)}{V_B} \right) \left( \frac{(\rho +m)P}{r+m} \right) \left(\frac{V_B}{V}\right)^{\Phi(r+m)}\notag\\
 &- \left( \eta \Phi(r) \left(\frac{V_B}{V}\right)^{\Phi(r)} + (1-\eta) \Phi(r+m)  \left(\frac{V_B}{V}\right)^{\Phi(r+m)} \right) -  \left( \frac{\kappa \rho P}{V_B} \right) B\left(V_B,V_T\right)
\left(\frac{V_B}{V}\right)^{\Phi(r)}, 
\end{align}
where
\[
B\left(V_B,V_T\right) := \frac{\Phi(r)}{r} - \Ind_{\{V_T>V_B\}}\left[W^{(r)}\left(\log \frac{V_T}{V_B} \right) - \Phi(r)\overline{W}^{(r)}\left( \log \frac{V_T}{V_B} \right) \right].
\]
After rearranging terms, we can rewrite $\frac{\partial \Eq}{\partial V_B} (V; V_B)$ as
\begin{align}\label{Opt:Aux3}
\frac{\partial \Eq}{\partial V_B} (V;V_B) &= 
- \eta (\Phi(r) + 1)\left[\left(\frac{V_B}{V}\right)^{\Phi(r)} - \left(\frac{V_B}{V}\right)^{\Phi(r+m)} \right] \notag\\
  &- \left(\frac{V_B}{V}\right)^{\Phi(r+m)} \left( \frac{ 
  C
  }{V_B} \right) f(V_B) - \left(\frac{\kappa \rho P}{V_B} \right) B\left(V_B,V_T\right)
\left[ \left(\frac{V_B}{V}\right)^{\Phi(r)}- \left(\frac{V_B}{V}\right)^{\Phi(r+m)} \right],\quad V_B\not=V_T.
\end{align}
Note that the first and second terms in the right-hand side of \eqref{Opt:Aux3} are non-positive since $V\geq V_B$, $C > 0$,
$\Phi$ is a non-negative and strictly increasing 
function, and because $f(V_B) \geq 0$ for $V_B \in (V_B^*, V)$. For the last term, if $V_B\geq V_T$, $B\left(V_B,V_T\right) = \Phi(r) /r > 0$ and otherwise we use the fluctuation identity \eqref{FI:1st:PT:Cont} 
 to write
\[
B\left(V_B,V_T\right)
= \frac{\Phi(r)}{r} \E_{- \log \frac{V_T}{V_B}}  \left[ e^{-r \tau_0^+} ; \tau_0^+ < \infty \right] > 0. \]
Hence the last term in \eqref{Opt:Aux3} is non-positive as well.
Thus, for all 
$V_B \in (V_B^*, V) \backslash\{V_T\}$, 
\begin{align}
\frac{\partial \Eq}{\partial V_B} (V;V_B) < 0. \label{E_V_B_der_negative}
\end{align}
Hence, the proof follows from the continuity of the mapping $V_B\mapsto \Eq(V;V_B)$ together with \eqref{E_V_B_der_negative}.
\end{proof}
\begin{prop}\label{Prop:LLC:Cont}The level $V_B^*$ satisfies the limited liability constraint  \eqref{limited_liability}.
\end{prop}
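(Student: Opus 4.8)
The plan is to verify $\Eq(V;V_B^*) \ge 0$ for every $V \ge V_B^*$, treating separately the two regimes that define $V_B^*$.

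First I would dispose of the degenerate case $V_B^* = 0$. By construction this occurs only when $V_T = 0$ and $K \le 0$, so the equity value is the explicit affine function in \eqref{Eq:Val:VB_0}, namely $\Eq(V;0) = V + \big( \frac{\kappa \rho}{r} - \frac{\rho + m}{r+m} \big) P$. It then suffices to show the constant term is nonnegative. Since $\Phi$ is nonnegative and nondecreasing we have $\Phi(r+m) \ge \Phi(r) > 0$, while $K \le 0$ reads $\frac{\rho+m}{r+m} \Phi(r+m) \le \frac{\kappa \rho}{r} \Phi(r)$; chaining these gives $\frac{\rho+m}{r+m} \Phi(r) \le \frac{\kappa \rho}{r} \Phi(r)$, i.e.\ $\frac{\kappa \rho}{r} - \frac{\rho+m}{r+m} \ge 0$, and therefore $\Eq(V;0) \ge 0$ for all $V \ge 0$.

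The substantive case is $V_B^* > 0$. Here I would lean directly on Proposition \ref{Prop:Optim:Cont}, or rather on its proof, which establishes that for any fixed $V > V_B^*$ the map $V_B \mapsto \Eq(V;V_B)$ is continuous and monotonically decreasing on the closed interval $[V_B^*, V]$. Evaluating this map at the right endpoint $V_B = V$ gives $\Eq(V;V) = 0$: when the initial asset value coincides with the bankruptcy level, $\sigma_{V_B}^- = 0$ almost surely, so $\D(V;V) = \V(V;V) = (1-\eta) V$ and the equity value vanishes (this is exactly the boundary observation recalled at the beginning of Subsection \ref{SubSec:Opt:Cont}). Monotonicity then yields $\Eq(V;V_B^*) \ge \Eq(V;V) = 0$ for every $V > V_B^*$. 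For the remaining point $V = V_B^*$, Remark \ref{remark_E_V_B_zero} gives $\Eq(V_B^* +; V_B^*) = 0$, so \eqref{limited_liability} holds with equality at the boundary.

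I do not anticipate a genuine obstacle here: once one observes that the ``smallest admissible competitor'' $V_B = V$ produces zero equity, the proposition is essentially a corollary of Proposition \ref{Prop:Optim:Cont}. The only points that need a little care are (i) confirming that the monotonicity of $V_B \mapsto \Eq(V;V_B)$ obtained in the proof of Proposition \ref{Prop:Optim:Cont} extends up to and including the endpoint $V_B = V$, which it does because the single exceptional point $V_B = V_T$ is removable by continuity of $\Eq(V;\cdot)$; and (ii) the sign manipulation of $K$ in the case $V_B^* = 0$. A more computational alternative would be to differentiate \eqref{Eq:Val:VT_0} in $V$ and show $\frac{\partial \Eq}{\partial V}(V;V_B^*) \ge 0$ on $(V_B^*,\infty)$ via the identity \eqref{FI:1st:PT:Cont}, but the monotonicity-in-$V_B$ route is shorter and recycles work already done.
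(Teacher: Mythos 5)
Your proof is correct, but for the substantive case $V_B^* > 0$ it takes a genuinely different route from the paper. The paper differentiates the equity value in the \emph{asset} variable: it computes $\frac{\partial \Eq}{\partial V}(V;V_B^*)$, rearranges it via \eqref{derequity_v_b} into the form \eqref{aux_1} (a sum of $1$, terms controlled by $V\ge V_B^*$, the nonpositive quantity $\frac{\partial\Eq}{\partial V_B}(V;V_B^*)$, and a resolvent-density term), concludes the derivative is positive on $(V_B^*,\infty)\setminus\{V_T\}$, and integrates up from the anchor $\Eq(V_B^*+;V_B^*)=0$ of Remark \ref{remark_E_V_B_zero}. You instead exploit monotonicity in the \emph{barrier} variable, which Proposition \ref{Prop:Optim:Cont} already supplies on $(V_B^*,V)$, and anchor the argument at the opposite corner of the domain, $\Eq(V;V)=0$; this boundary value is the same regularity fact ($\sigma_{V_B}^-=0$ when the process starts at the barrier, guaranteed by the standing assumption $c<0$ in the bounded-variation case), and your passage to the endpoint $V_B=V$ is justified because \eqref{Eq:Val} is continuous in $V_B$ up to $V_B=V$ with $\gamma(0+;q)=g(0+;q,a)=0$. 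Your route is shorter and recycles work already done; what the paper's extra computation buys is the stronger and financially meaningful conclusion that $V\mapsto\Eq(V;V_B^*)$ is increasing on $(V_B^*,\infty)$, which your argument does not deliver. The degenerate case $V_B^*=0$ is handled by the same sign manipulation of $K$ as in the paper.
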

\begin{proof}

(i) Suppose $V_B^* > 0$. For $V \geq V_B^*,$ by differentiating the equity value with respect to $V$ and using \eqref{der_gamma} together with \eqref{der_g} we get for $V\not=V_T$ 
\begin{align*}
\frac{\partial \Eq}{\partial V}(V;V_B^*) &= 1 + \frac{V_B^*}{V} \left[ \eta \Phi(r) \left(\frac{V_B^*}{V}\right)^{\Phi(r)} + (1-\eta) \Phi(r+m) \left(\frac{V_B^*}{V}\right)^{\Phi(r+m)} \right] \\
&+ \frac{P\kappa\rho}{V}  \left[\frac{\Phi(r)}{r} \left(\frac{V_B^*}{V}\right)^{\Phi(r)} +  \left( \Phi(r)\left(\frac{V_B^*}{V}\right)^{\Phi(r)} \overline{W}^{(r)}\left( \log\frac{V_T}{V_B^*}\right) \Ind_{\lbrace V_T > V_B^* \rbrace} - W^{(r)}\left( \log \frac{V_T}{V} \right) \Ind_{\lbrace V_T > V\rbrace}\right) \right] \\
&- \frac{1}{V} \left(\frac{P(\rho+m)}{r+m}\right)\Phi(r+m) \left(\frac{V_B^*}{V}\right)^{\Phi(r+m)}.
\end{align*}
 After rearranging terms and using \eqref{derequity_v_b}, we obtain for $V\geq V_B^*$ and $V\not=V_T$
\begin{align}\label{aux_1}
\frac{\partial \Eq}{\partial V}(V;V_B^*) &= 1 - \left( \frac{V_B^*}{V} \right) \left[ \eta \left(\frac{V_B^*}{V}\right)^{\Phi(r)} + (1-\eta) \left(\frac{V_B^*}{V}\right)^{\Phi(r+m)} \right] -  \left( \frac{V_B^*}{V} \right) \frac{\partial \Eq}{\partial V_B}(V;V_B^*)\notag\\
& \quad +  \frac{P\kappa\rho}{V}\left[ \Ind_{\{V_T >V_B^*\}}W^{(r)}\left( \log \frac{V_T}{V_B^*} \right) \left(\frac{V_B^*}{V}\right)^{\Phi(r)} - \Ind_{\{V_T >V\}}W^{(r)}\left( \log \frac{V_T}{V} \right)  \right],
\end{align}
where we understand $\frac{\partial \Eq}{\partial V_B}(V;V_B^*)$ as the right-hand side of \eqref{derequity_v_b} for the case $V_B^*=V_T$.

The right hand-side of \eqref{aux_1} is positive.
Indeed, the sum of the first two terms in the right hand side of \eqref{aux_1} is non-negative, since $V \geq V_B^*$, $\eta \in [0,1]$, and $\Phi$ is strictly increasing; the third term is non-negative in light of Proposition \ref{Prop:Optim:Cont} by  \eqref{E_V_B_der_negative}; 
using that $V\geq V_B^*$ together with  \eqref{Thm:Res:Dens:Cont} (in the appendix) we obtain that the fourth term is non-negative as well.
Hence using \eqref{aux_1} together with the fact that the mapping $V\mapsto\Eq(V;V_B^*)$ is continuous and Remark \ref{remark_E_V_B_zero} we obtain that  $\Eq(V;V_B^*)\geq 0$ for $V\geq V_B^*$. 

(ii) 
Finally, for $V_B^* = 0$, where necessarily $V_T=0$ and $K \leq 0$ and hence
	\begin{align*}   \frac{\rho +m}{r+m}- \frac{\kappa\rho }{r} \leq \frac{\rho +m}{r+m}\frac {\Phi(r+m) } {\Phi(r)}- \frac{\kappa\rho }{r}=\frac{K}{\Phi(r)}\leq 0,
	\end{align*}
	it follows from identity \eqref{Eq:Val:VB_0} that $\mathcal{E}(V;0) \geq 0$.

\end{proof}
\noindent Finally, by Propositions \ref{Prop:NLLC:Cont}, \ref{Prop:Optim:Cont} and \ref{Prop:LLC:Cont}, we state our main result.
\begin{theo} \label{theorem_main_classical}
The optimal bankruptcy level is given by $V_B^*$.
\end{theo}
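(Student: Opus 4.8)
The plan is to assemble Theorem~\ref{theorem_main_classical} directly from the three propositions that precede it, which together verify that $V_B^*$ meets all the requirements of the optimization problem stated around~\eqref{limited_liability}. Recall that the problem is to maximize the equity value $\Eq(V;V_B)$ over choices of the bankruptcy threshold $V_B \geq 0$, subject to the limited liability constraint $\Eq(V;V_B) \geq 0$ for all $V \geq V_B$. I would organize the argument as: first dispose of thresholds strictly below $V_B^*$, then compare $V_B^*$ against thresholds strictly above it, then confirm $V_B^*$ itself is feasible.

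First I would note that by Proposition~\ref{Prop:NLLC:Cont}, whenever $V_B^* > 0$, any candidate $V_B$ with $0 < V_B < V_B^*$ fails the limited liability constraint, since there exists some $V > V_B$ with $\Eq(V;V_B) < 0$; such $V_B$ are therefore not admissible and need not be considered. (When $V_B^* = 0$ this case is vacuous.) Next, for admissible thresholds $V_B > V_B^*$, Proposition~\ref{Prop:Optim:Cont} shows $\Eq(V;V_B^*) \geq \Eq(V;V_B)$ for all $V > V_B$, and more precisely the monotonicity of $V_B \mapsto \Eq(V;V_B)$ on $[V_B^*, V]$ gives the comparison for all $V > V_B^*$; hence no threshold above $V_B^*$ can outperform it. Finally, Proposition~\ref{Prop:LLC:Cont} shows that $V_B^*$ itself satisfies $\Eq(V;V_B^*) \geq 0$ for all $V \geq V_B^*$, so $V_B^*$ is admissible. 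Combining these three facts: $V_B^*$ is feasible, beats every feasible threshold strictly above it, and every threshold strictly below it is infeasible — hence $V_B^*$ is optimal, which is the assertion of the theorem.

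There is essentially no additional obstacle here, since the theorem is a pure synthesis statement; the substantive work has already been carried out in the three propositions (which in turn rest on Lemma~\ref{lemma_f} and the fluctuation identities of Lemmas~\ref{Lem:1st:FI} and~\ref{2nd:Fluc:Id_lemma}). The one point that deserves a sentence of care is the boundary behavior when $V = V_B^*$: one uses Remark~\ref{remark_E_V_B_zero} (that $\Eq(V_B^*+;V_B^*) = 0$) together with continuity of $V \mapsto \Eq(V;V_B^*)$ to close the inequality $\Eq(V;V_B^*) \geq 0$ down to $V = V_B^*$, and likewise in the $V_B^* = 0$ case one falls back on the closed-form expression~\eqref{Eq:Val:VB_0}. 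I would also remark explicitly that the $V_B^* = 0$ case only arises when $V_T = 0$ and $K \leq 0$, so that the piecewise definition of the candidate is exhaustive and the three propositions between them cover every situation. The proof is then a two-line citation of Propositions~\ref{Prop:NLLC:Cont}, \ref{Prop:Optim:Cont}, and~\ref{Prop:LLC:Cont}.
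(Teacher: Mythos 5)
Your proposal matches the paper exactly: the theorem is stated as an immediate consequence of Propositions~\ref{Prop:NLLC:Cont}, \ref{Prop:Optim:Cont}, and \ref{Prop:LLC:Cont}, which is precisely the three-step synthesis you describe. Your additional remarks on the boundary case $V=V_B^*$ and the $V_B^*=0$ case are consistent with where the paper handles them (inside Proposition~\ref{Prop:LLC:Cont} and Remark~\ref{remark_E_V_B_zero}), so nothing is missing.
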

\section{Optimal bankruptcy level under periodic observations}\label{Sec:Opt:Per}
\noindent We turn our attention to the problem of determining the optimal bankruptcy-triggering level when the asset value process, $(V_t)_{t\geq 0}$, is not observed continuously, but at a set of discrete times given by the arrival times of an independent Poisson process with intensity $\lambda > 0$. We denote the collection of these times by $\mathcal{T} =(T_{i}^{\lambda})_{ i \geq 1}$. This assumption makes the model more realistic in the sense that shareholders do not observe the evolution of the firm's asset 
continuously and instead they observe it 
at discrete times, and hence the decision to declare bankruptcy is taken at these observation dates. 

For such a collection of times and a bankruptcy barrier $V_B \geq 0$, we define the \textit{first observed bankruptcy time} as the stopping time
\begin{equation}\label{Bkrpt_Time_Periodic}
\sigma_{V_B}^{-,\lambda} := \inf\lbrace  T^{\lambda} \in \mathcal{T}  : V_{T^{\lambda}} < V_B \rbrace.
\end{equation}
It is more realistic to assume that  the bankruptcy decision can be made at time zero so that the bankruptcy time becomes
$\overline{\sigma}_{V_B}^{-,\lambda} := \inf\lbrace T^{\lambda} \in \mathcal{T}\cup \{0\}: V_{T^{\lambda}} < V_B \rbrace = \sigma_{V_B}^{-,\lambda} \Ind_{\lbrace V \geq V_B \rbrace}$. 
However, similar to the continuous observation case, 
we obtain zero equity value if the initial value $V < V_B$ and therefore we 
focus on the case $V \geq V_B$ and hence $ \overline{\sigma}_{V_B}^{-,\lambda} = \sigma_{V_B}^{-,\lambda}$ in the rest of this section.

In this setting, we are interested in proving the existence of  an \textit{optimal bankruptcy level}, denoted by $V_B^{*,\lambda}$, which maximizes the equity value \eqref{Defn_Equity_Value} and satisfies the \textit{limited liability constraint} \eqref{limited_liability}. 
\begin{remark} \label{remark_convergence} Intuitively we expect that the \textit{first observed bankruptcy time} will converge 
to the \textit{classical bankruptcy time} as $\lambda \rightarrow \infty$. We also expect that the expressions and results derived in this Section will converge to the ones shown in Subsection \ref{SubSec:Opt:Cont} as $\lambda \rightarrow \infty$. These convergence results are confirmed numerically in Section \ref{Sec:Num}.
\end{remark}
\noindent We will develop expressions for the first observed passage time \eqref{Bkrpt_Time_Periodic}, the deficit 
and the resolvent measure under this setting of discrete time observations; in turn, these expressions will allow us to express the total value of debt \eqref{Total_Debt}, the value of the firm \eqref{Market_Value_Firm},  and the equity value \eqref{Defn_Equity_Value} in terms of the corresponding fluctuation identities.
\subsection{Fluctuation identities for spectrally positive processes under periodic observations}\label{fluc_iden_periodic}
In this setting, we are interested in the first \textit{observed} time below $0$ of the \lev process $X$, which is defined as 
\begin{equation} \label{Sp_Pos_Poisson_Passage} 
T_0^{-} := \inf \lbrace T^{\lambda} \in \mathcal{T}: X_{T^{\lambda}} < 0 \rbrace,
\end{equation}
as well as in the value of 
$X_{T_0^{-}}$. 
In our setting, for a spectrally positive L\'evy process started at $\log (V/V_B)$ the bankruptcy time given in \eqref{Bkrpt_Time_Periodic} is equal 
to $T_0^{-}$; similarly, the asset value at bankruptcy $V_{\sigma_{V_B}^{-,\lambda}}$ is given by $V_B \exp (X_{T_0^-})$. The following lemma  gives the joint Laplace transform of $(T_0^-, X_{T_0^{-}})$. This is a direct consequence of  Theorem 3.1 in  \cite{albrecher2016}; the proof is deferred to Appendix \ref{proof_1st:FI:Per_lemma}. 
\begin{lemma}\label{1st:FI:Per_lemma} For $x,\beta \geq 0$ and $q >0$, the joint Laplace transform of $(T_0^-, X_{T_0^{-}})$ is given by
	\begin{equation}\label{1st:FI:Per}
	J^{(q,\lambda)}(x;\beta) := \E_x \left[ e^{-q T_0^- + \beta X_{T_0^{-}} } ; T_0^{-} < \infty \right] = \frac{\Phi(\lambda + q) - \Phi(q)}{\beta + \Phi(\lambda+ q)} e^{-\Phi(q) x}.
	\end{equation}
\end{lemma}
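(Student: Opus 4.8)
The plan is to reduce the claim to a first-passage identity for the \emph{dual} spectrally negative \lev process, since the cited Theorem 3.1 in \cite{albrecher2016} (and the companion results in \cite{LANDRIAULT2018152}) are stated in the spectrally negative setting where Poissonian (``Parisian-type'') observation of first passage is well developed. Concretely, I would set $Y := -X$, which is a spectrally negative \lev process, and observe that $T_0^- = \inf\{T^\lambda \in \mathcal{T} : X_{T^\lambda} < 0\}$ becomes the first \emph{Poisson-observation} time at which $Y$ is strictly above level $0$, started from $Y_0 = -x \le 0$. Its Laplace exponent is $\psi_Y(\theta) = \psi(\theta)$ restricted appropriately (the spectrally negative convention), and crucially $\Phi$, the right-inverse of $\psi$, plays the role of the relevant inverse for $Y$ as well. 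The overshoot $X_{T_0^-}$ of $X$ below $0$ corresponds to $-Y_{T_0^-}$, the amount by which $Y$ sits above $0$ at that Poisson epoch; note this overshoot is genuinely random here (unlike the continuous-observation creeping \eqref{creeping}), because the process is sampled only at the epochs of $N^\lambda$.

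Next I would invoke Theorem 3.1 of \cite{albrecher2016}, which gives, for the spectrally negative process $Y$ started below a level, the joint transform of the first Poisson-observation up-crossing time and the position at that time, expressed through the scale functions of $Y$ and the inverse $\Phi(\lambda+q)$. The key structural input is that, conditionally on the Poisson clock, one performs a resolvent-type computation: between observations $Y$ evolves freely, and at each epoch one checks whether $Y > 0$; summing a geometric-type series over the number of epochs before success produces the factor $\Phi(\lambda+q) - \Phi(q)$ in the numerator and $\beta + \Phi(\lambda+q)$ in the denominator. Translating back via $X = -Y$ and $X_{T_0^-} = -Y_{T_0^-}$, and using that $X$ started at $x$ corresponds to $Y$ started at $-x$, yields exactly
\[
J^{(q,\lambda)}(x;\beta) = \frac{\Phi(\lambda+q) - \Phi(q)}{\beta + \Phi(\lambda+q)}\, e^{-\Phi(q)x}.
\]
The separation of the $x$-dependence into the single exponential factor $e^{-\Phi(q)x}$ reflects the strong Markov property at $\tau_0^-$ together with the spectrally positive creeping property \eqref{creeping}: before the first Poisson observation can register $X$ below $0$, the process $X$ must first cross $0$ continuously, contributing exactly the factor $\E_x[e^{-q\tau_0^-}] = e^{-\Phi(q)x}$ from Lemma \ref{Lem:1st:FI}, and from level $0$ the remaining transform is independent of the starting point.

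I expect the main obstacle to be purely bookkeeping: carefully matching the sign conventions and the roles of $q$, $\lambda+q$, and $\Phi$ between the spectrally negative formulation in \cite{albrecher2016} and the spectrally positive formulation used here, and verifying that the scale-function expressions collapse to the clean rational-times-exponential form stated. In particular one must check the degenerate/boundary behaviour (e.g. the value at $x=0$, giving $J^{(q,\lambda)}(0;\beta) = (\Phi(\lambda+q)-\Phi(q))/(\beta+\Phi(\lambda+q))$, and the limit $\lambda \to \infty$, which should recover the continuous-observation transform $\E_x[e^{-q\tau_0^-}] = e^{-\Phi(q)x}$ of Lemma \ref{Lem:1st:FI} uniformly in $\beta$, consistent with Remark \ref{remark_convergence}). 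Since the excerpt defers the detailed derivation to Appendix \ref{proof_1st:FI:Per_lemma}, I would carry out that translation there and keep the body statement as the clean formula \eqref{1st:FI:Per}.
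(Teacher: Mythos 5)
Your proposal is correct and matches the paper's argument: the appendix proof consists precisely of the duality step $\E_x[e^{-qT_0^- + \beta X_{T_0^-}};T_0^-<\infty] = \tilde{\E}_{-x}[e^{-q\tilde{T}_0^+ - \beta Y_{\tilde{T}_0^+}};\tilde{T}_0^+<\infty]$ followed by a direct application of Theorem 3.1 of Albrecher et al.\ (identity \eqref{Thm:Pass:Time:Per} with $a=0$, $\theta=\beta$). Your additional remarks on the geometric-series structure and the creeping interpretation of the factor $e^{-\Phi(q)x}$ are sound intuition but not needed beyond the citation.
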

\noindent On the other hand, using the resolvent 
measure for spectrally negative \lev processes observed at Poisson arrival times derived by Landriault et al. \cite{LANDRIAULT2018152}, we can obtain an expression for
\begin{equation}\label{fun_Lambda}
\Lambda^{(q,\lambda)} (x,z) := \E_x \left[ \int_0^{T_z^-} e^{-qt} \Ind_{\lbrace X_t \geq \log V_T \rbrace} dt \right].
\end{equation}
The proof of the following proposition is given in Appendix \ref{appendix_2nd:FI:Per_lemma}.
\begin{prop}\label{2nd:FI:Per_lemma} 
	Let $q > 0$, and $x, z\in\R$ be fixed with $x \geq z$. 
	We have 
\begin{align}
\Lambda^{(q,\lambda)} (x,z) &= \frac{1}{q}\left( 1- J^{(q,\lambda)}(x-z;0) \right) - \frac{\Phi(\lambda + q) - \Phi(q)}{\Phi(\lambda +q)} e^{\Phi(q)(z-x)} \overline{W}^{(q)}(\log V_T -z)\Ind_{\lbrace V_T > 0 \rbrace} \nonumber \\
&- \frac{\Phi(\lambda + q) - \Phi(q)}{\lambda \Phi(\lambda +q)} e^{\Phi(q)(z- x)} Z^{(q)}(\log V_T - z;\Phi(\lambda + q))\Ind_{\lbrace V_T > 0 \rbrace} \nonumber \\
& + \overline{W}^{(q)}(\log V_T - x) \Ind_{\lbrace V_T > 0 \rbrace}.
\label{Res:Dens:Tax}
\end{align}		
\end{prop}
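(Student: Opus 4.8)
The plan is to realize $\Lambda^{(q,\lambda)}(x,z)$ as the integral, over the half-line $[\log V_T,\infty)$, of the $q$-potential (resolvent) measure of $X$ killed at the periodically-observed down-passage time $T_z^-$, and then to invoke the resolvent identity of Landriault et al.\ \cite{LANDRIAULT2018152}. First I would dispose of the degenerate case: when $V_T=0$ the indicator in \eqref{fun_Lambda} is identically $1$, so $\Lambda^{(q,\lambda)}(x,z)=\E_x[\int_0^{T_z^-}e^{-qt}dt]=\frac1q(1-\E_x[e^{-qT_z^-};T_z^-<\infty])$; by spatial homogeneity of the \lev process $X$, the law of $T_z^-$ under $\Prob_x$ coincides with that of $T_0^-$ under $\Prob_{x-z}$, so Lemma \ref{1st:FI:Per_lemma} with $\beta=0$ gives $\Lambda^{(q,\lambda)}(x,z)=\frac1q(1-J^{(q,\lambda)}(x-z;0))$, which is exactly \eqref{Res:Dens:Tax} once the $\Ind_{\{V_T>0\}}$ terms are deleted. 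Hence from here on I would assume $V_T>0$, and, using the same homogeneity, reduce to $z=0$ (replacing $\log V_T$ by $\log V_T-z$ and $x$ by $x-z$, and restoring the shift at the very end).

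Writing $U^{(q,\lambda)}(y,dw):=\E_y[\int_0^{T_0^-}e^{-qt}\Ind_{\{X_t\in dw\}}dt]$ for the resolvent of $X$ killed at $T_0^-$, we have $\Lambda^{(q,\lambda)}(x,0)=\int_{[\log V_T,\infty)}U^{(q,\lambda)}(x,dw)$, or equivalently $\frac1q\bigl(1-J^{(q,\lambda)}(x;0)\bigr)-\int_{(-\infty,\log V_T)}U^{(q,\lambda)}(x,dw)$. Since $X$ is spectrally positive, $-X$ is spectrally negative and, with the normalization $\int_0^\infty e^{-\theta u}W^{(q)}(u)du=(\psi(\theta)-q)^{-1}$ adopted in the excerpt, the functions $W^{(q)}$, $Z^{(q)}(\cdot;\theta)$ and $\Phi$ are precisely the $q$-scale function, second scale function, and inverse Laplace exponent of $-X$; under this duality, $T_0^-$ for $X$ is the first periodically-observed up-passage of $0$ by $-X$ started from $-x\le0$, and $\{X_t\ge\log V_T\}=\{-X_t\le-\log V_T\}$. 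The resolvent measure of a spectrally negative \lev process killed at such a periodically-observed passage time, as derived by Landriault et al.\ \cite{LANDRIAULT2018152}, is explicit in terms of $W^{(q)}$, $Z^{(q)}(\cdot;\Phi(\lambda+q))$, $\Phi(q)$, and $\Phi(\lambda+q)$; I would substitute it for $U^{(q,\lambda)}(x,dw)$ and integrate $w$ over the relevant half-line, splitting if necessary according to the position of $\log V_T$ relative to $0$ and to $x$. This produces a finite sum of terms, each an antiderivative $\overline W^{(q)}$ of $W^{(q)}$, or a $Z^{(q)}(\cdot;\Phi(\lambda+q))$, evaluated at $\log V_T$ and multiplied by $e^{-\Phi(q)x}$ and by one of the prefactors $\tfrac{\Phi(\lambda+q)-\Phi(q)}{\Phi(\lambda+q)}$ or $\tfrac{\Phi(\lambda+q)-\Phi(q)}{\lambda\Phi(\lambda+q)}$. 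Collecting these, using the definition \eqref{2nd:Scale:Fn} of $Z^{(q)}(\cdot;\theta)$ to merge the $W^{(q)}$- and $\overline W^{(q)}$-contributions into the $Z^{(q)}(\log V_T-z;\Phi(\lambda+q))$ term of \eqref{Res:Dens:Tax}, and recognizing $\tfrac{\Phi(\lambda+q)-\Phi(q)}{\Phi(\lambda+q)}e^{-\Phi(q)x}$ (with the constant) as $1-J^{(q,\lambda)}(x;0)$ via \eqref{1st:FI:Per}, I would undo the shift $z\mapsto0$ to arrive at \eqref{Res:Dens:Tax}.

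The hard part will be the bookkeeping in this middle step: transcribing the Landriault et al.\ resolvent density into the present (spectrally positive, killed at periodically-observed down-passage) coordinates with the correct killing level and starting point, tracking the case distinctions in the position of $\log V_T$, and then carrying out the half-line integration and the algebraic regrouping that isolates the $Z^{(q)}(\cdot;\Phi(\lambda+q))$ combination; the remaining steps are routine. Two consistency checks would guide the computation: letting $V_T\downarrow0$ must collapse \eqref{Res:Dens:Tax} to $\frac1q(1-J^{(q,\lambda)}(x-z;0))$, and letting $\lambda\to\infty$ — so that $\Phi(\lambda+q)\to\infty$, $J^{(q,\lambda)}(\cdot;0)\to e^{-\Phi(q)\,\cdot}$ by \eqref{1st:FI:Per}, and the $1/\lambda$-term vanishes — must recover the continuous-observation occupation functional $g$ of Lemma \ref{2nd:Fluc:Id_lemma}.
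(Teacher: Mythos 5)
Your proposal is correct and follows essentially the same route as the paper: the complement decomposition $\Ind_{\{X_t\ge\log V_T\}}=1-\Ind_{\{X_t<\log V_T\}}$, Lemma \ref{1st:FI:Per_lemma} for the full occupation integral, and duality plus the Landriault et al.\ resolvent density \eqref{Res:Dens:PO} integrated over a half-line for the complementary piece, with the $Z^{(q)}(\cdot;\Phi(\lambda+q))$ term emerging from the antiderivative of the resolvent density exactly as in the paper's computation of $A_2$.
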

\begin{remark} \label{remark_Lambda}
	For $q > 0$ and $z \in \R$ we have
		\begin{align*}
	\Lambda^{(q,\lambda)}(z,z) &=   \frac{\Phi(q)}{\Phi(\lambda + q)}  \Big[ \frac{1}{q} + \Ind_{\lbrace V_T > 0 \rbrace}\overline{W}^{(q)}(\log V_T - z) \Big] \\ &-  \left( \frac{\Phi(\lambda + q) - \Phi(q)}{\lambda \Phi(\lambda + q)} \right)Z^{(q)}(\log V_T - z ; \Phi(\lambda + q))\Ind_{\lbrace V_T >0 \rbrace}. 
	\end{align*}
\end{remark}
\subsection{Expression for the firm/debt/equity values in terms of the scale functions}
We will use the expressions derived in Section \ref{fluc_iden_periodic} to write the firm, debt and equity values. 

Suppose $V_B > 0$. The value of the firm \eqref{Market_Value_Firm} for  $\sigma_{V_B}^- = \sigma_{V_B}^{-,\lambda}$ is given by 
\begin{align}
\mathcal{V}(V;V_B) &= V + \E\left[ \int_0^{\sigma_{V_B}^{-,\lambda}} e^{-rt}  \Ind_{\lbrace V_t \geq V_T \rbrace} P\kappa\rho dt \right] - \eta \E\left[ e^{-r\sigma_{V_B}^{-,\lambda}} V_{\sigma_{V_B}^{-,\lambda}} ; \sigma_{V_B}^{-,\lambda} < \infty \right] \nonumber \\
&= V + P\kappa\rho \Lambda^{(r,\lambda)}(\log V, \log V_B) - \eta V_B J^{(r,\lambda)}\left( \log \frac{V}{V_B};1 \right).\label{Val:Per}
\end{align}
On the other hand, 
the total value of the debt \eqref{Total_Debt} for  $\sigma_{V_B}^- = \sigma_{V_B}^{-,\lambda}$ is 
\begin{align}
\mathcal{D}(V;V_B) &= \E\left[ \int_0^{\sigma_{V_B}^{-,\lambda}} e^{-(r+m)t}(P\rho + p) dt \right] + (1-\eta)\E\left[ e^{-(r+m)\sigma_{V_B}^{-,\lambda}} V_{\sigma_{V_B}^{-,\lambda}} ; \sigma_{V_B}^{-,\lambda} < \infty \right] \nonumber \\
&= \frac{(\rho + m)P}{r+m}\left( 1- J^{(r+m,\lambda)}\left( \log \frac{V}{V_B} ;0 \right) \right) + (1-\eta) V_B J^{(r+m,\lambda)}\left( \log \frac{V}{V_B} ;1 \right).\label{Debt:Per}
\end{align}
By taking the difference between these, the equity value 
\eqref{Defn_Equity_Value} for  $\sigma_{V_B}^- = \sigma_{V_B}^{-,\lambda}$
is 
\begin{align}
\Eq(V;V_B) &= V + P\kappa\rho \Lambda^{(r,\lambda)}(\log V, \log V_B) - \eta V_B J^{(r,\lambda)}\left( \log \frac{V}{V_B};1 \right) \nonumber\\
 &\quad - \frac{(\rho + m)P}{r+m}\left( 1- J^{(r+m,\lambda)}\left( \log \frac{V}{V_B} ;0 \right) \right) - (1-\eta) V_B J^{(r+m,\lambda)}\left( \log \frac{V}{V_B} ;1 \right). \label{Eq:Per:Obs}
\end{align}
The case $V_B = 0$ is described later as we will see that when the optimal barrier is zero, we necessarily have
 $V_T = 0$.
\subsection{Determining the optimal bankruptcy level}\label{SubSec:Opt:Per}
Now we can move to finding the optimal bankruptcy threshold, denoted by $V_B^{*,\lambda}$. 
Our approach will be different from the calculations done in Section \ref{Sec:Opt:Cont}, since now we will find the optimal barrier through a \textit{continuous fit} approach. In other words, we will find the optimal barrier $V_B^{*,\lambda}$ as the solution of the equation
\begin{align}
\Eq(V_B^{*,\lambda}; V_B^{*,\lambda}) =0. \label{V_B_equality}
\end{align}
Our objective in this section is to show that there exists a unique solution to \eqref{V_B_equality} under  a certain condition, and that it is optimal in the sense of maximizing the equity value under the limited liability constraint \eqref{limited_liability}.

First, note that for $V_B > 0$
\begin{align}
\Eq(V_B;V_B) &= V_B C_\lambda
+ P\kappa \rho \Lambda^{(r,\lambda)}(\log V_B, \log V_B) - \frac{(\rho + m)P}{r +m }\frac{\Phi(r+m)}{\Phi(\lambda + r+m)}, \label{Eq:Per:Diag}
\end{align}
where
\begin{align} \label{C_lambda}
\begin{split}
C_\lambda &:=1 - \eta J^{(r,\lambda)}(0;1) - (1-\eta) J^{(r+m,\lambda)}(0;1) \\ &=  \eta\Big( 1- 
J^{(r,\lambda)}(0;1)
 \Big) + (1-\eta) \Big( 1- 
J^{(r+m,\lambda)}(0;1)
 \Big) =  \eta\Big( \frac{1+ \Phi(r)}{1 + \Phi(\lambda+ r)}  \Big) + (1-\eta) \Big(  \frac{1+ \Phi(r+m)}{1 + \Phi(\lambda+ r+m)} \Big).
 \end{split}
\end{align}
We need the following result in order to show the existence of $V_B^{*,\lambda}$. 
\begin{lemma}\label{Lem:Per:Aux1} The mapping $z \mapsto \Lambda^{(q,\lambda)}(z,z)$ is non-decreasing on $\R$ with the limit
\[
\lim_{z \downarrow -\infty} \Lambda^{(q,\lambda)}(z,z) = 
\begin{cases} 0 & \text{if } V_T > 0,\\
\frac{1}{q}\frac{\Phi(q)}{\Phi(\lambda + q)} & \text{if } V_T=0.
\end{cases}
\]
\end{lemma}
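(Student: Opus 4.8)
The plan is to argue directly from the probabilistic definition \eqref{fun_Lambda} of $\Lambda^{(q,\lambda)}$ rather than from the closed form in Remark \ref{remark_Lambda}. The first step is a change of starting point. Since $X$ has stationary independent increments and the observation clock $\mathcal{T}$ is independent of $X$, under $\Prob_z$ the pair $(X-z,\mathcal{T})$ has the same law as $(X,\mathcal{T})$ under $\Prob_0$; moreover $T_z^- := \inf\{T^\lambda \in \mathcal{T} : X_{T^\lambda} < z\}$ equals $\inf\{T^\lambda \in \mathcal{T} : (X-z)_{T^\lambda} < 0\}$, and $\{X_t \ge \log V_T\} = \{(X-z)_t \ge \log V_T - z\}$. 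Hence, for $V_T > 0$,
\[
\Lambda^{(q,\lambda)}(z,z) = \E_0\Big[\int_0^{T_0^-} e^{-qt}\,\Ind_{\{X_t \ge \log V_T - z\}}\,dt\Big] = \int_0^\infty e^{-qt}\,\Prob_0\big(t < T_0^-,\; X_t \ge \log V_T - z\big)\,dt,
\]
the second equality being Tonelli; when $V_T = 0$ the indicator is identically $1$.

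For the monotonicity, note that in this representation $T_0^-$ no longer depends on $z$, while the event $\{X_t \ge \log V_T - z\}$ grows as $z$ increases (the threshold $\log V_T - z$ decreases). Thus the integrand is non-decreasing in $z$ for every $t$, and so $z \mapsto \Lambda^{(q,\lambda)}(z,z)$ is non-decreasing; in the case $V_T = 0$ the map reduces to the $z$-independent constant $\int_0^\infty e^{-qt}\Prob_0(t < T_0^-)\,dt$.

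For the limits, first take $V_T = 0$: here $\Lambda^{(q,\lambda)}(z,z) = \E_0[\int_0^{T_0^-} e^{-qt}\,dt] = \tfrac1q(1 - \E_0[e^{-qT_0^-}; T_0^- < \infty]) = \tfrac1q(1 - J^{(q,\lambda)}(0;0))$, and substituting $J^{(q,\lambda)}(0;0) = (\Phi(\lambda+q) - \Phi(q))/\Phi(\lambda+q)$ from Lemma \ref{1st:FI:Per_lemma} gives the asserted constant $\tfrac1q\tfrac{\Phi(q)}{\Phi(\lambda+q)}$. Now take $V_T > 0$: as $z \downarrow -\infty$ the threshold $\log V_T - z \to \infty$, so for each fixed $t$ the indicator $\Ind_{\{X_t \ge \log V_T - z\}} \to 0$ $\Prob_0$-a.s.\ (because $X_t$ is a.s.\ finite); the integrand is dominated by $e^{-qt}$, which is $dt \otimes \Prob_0$-integrable since $q > 0$, so dominated convergence yields $\lim_{z \downarrow -\infty} \Lambda^{(q,\lambda)}(z,z) = 0$.

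The only point needing care is the change-of-starting-point step: shifting the initial level of $X$ moves the barrier but leaves the Poisson observation times $\mathcal{T}$ untouched, so the $T_0^-$ appearing in the representation is genuinely independent of $z$; once this is in place, the monotonicity and both limits are immediate. This route also avoids the large-argument asymptotics of $\overline{W}^{(q)}$ and $Z^{(q)}(\,\cdot\,;\Phi(\lambda+q))$ that a direct manipulation of the formula in Remark \ref{remark_Lambda} would require (there, monotonicity would reduce to the inequality $\tfrac{\Phi(\lambda+q)-\Phi(q)}{\lambda}Z^{(q)}(y;\Phi(\lambda+q)) \ge W^{(q)}(y)$ and the limits to precise tail estimates of the scale functions).
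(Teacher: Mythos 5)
Your proof is correct and follows essentially the same route as the paper's: both use the spatial homogeneity of $X$ (and independence of $\mathcal{T}$) to rewrite $\Lambda^{(q,\lambda)}(z,z)$ as an expectation under $\Prob_0$ with the threshold shifted to $\log V_T - z$, from which monotonicity is immediate and the limits follow by dominated/bounded convergence and the formula for $J^{(q,\lambda)}(0;0)$. Your additional care about why the observation clock is unaffected by the shift is a nice touch but not a departure from the paper's argument.
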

\begin{proof}
Suppose $V_T > 0$. By \eqref{fun_Lambda} together with the spatial homogeneity of the L\'evy process,
\begin{equation*}
\Lambda^{(q,\lambda)}(z,z) = \E_z \left[ \int_0^{T_z^-} e^{-qt} \Ind_{\lbrace X_t \geq \log V_T \rbrace} dt \right] = \E \left[ \int_0^{T_0^-} e^{-qt} \Ind_{\lbrace X_t \geq \log V_T - z \rbrace} dt \right].
\end{equation*}
As we can note from the previous identity, we have that the mapping $z\mapsto\Lambda^{(q,\lambda)}(z,z)$ is non-decreasing, and by bounded convergence $ \lim_{z\downarrow -\infty} \Lambda^{(q,\lambda)}(z,z) = 0 $.

On the other hand, for $V_T=0$ identity \eqref{fun_Lambda} implies that $\Lambda^{(q,\lambda)}(z,z) = \frac{1}{q} \left( 1 - J^{(q,\lambda)}(0;0) \right) = \frac{1}{q} \frac{\Phi(q)}{\Phi(\lambda + q)}$.
\end{proof}
\noindent This result leads to the following proposition, which describes the limiting behavior of $V_B \mapsto \Eq(V_B;V_B)$.
\begin{prop}\label{Prop:Per:Aux1} The mapping $V_B \mapsto \Eq(V_B;V_B)$ is strictly increasing on $(0,\infty)$ with the limits:
\begin{align*}
\lim_{V_B \downarrow 0} \Eq(V_B;V_B) &= 
\begin{cases}  - \frac{P(\rho + m)}{r +m }\frac{\Phi(r+m)}{\Phi(\lambda + r+m)}& \text{if } V_T > 0,\\
- P K_\lambda
& \text{if } V_T=0,
\end{cases}\\
\lim_{V_B \uparrow \infty} \Eq(V_B;V_B) &= \infty,
\end{align*}
	where
\begin{equation}\label{LT:Per:K}
K_\lambda :=  \frac{\rho + m}{r +m }\frac{\Phi(r+m)}{\Phi(\lambda + r+m)} - \frac{\kappa \rho}{r}\frac{\Phi(r)}{\Phi(\lambda + r)}.
\end{equation}
\end{prop}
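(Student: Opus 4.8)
The plan is to extract everything directly from the closed form \eqref{Eq:Per:Diag}, which exhibits $V_B \mapsto \Eq(V_B;V_B)$ as the sum of the linear term $V_B C_\lambda$, the term $P\kappa\rho\,\Lambda^{(r,\lambda)}(\log V_B,\log V_B)$, and the $V_B$-independent constant $\frac{(\rho+m)P}{r+m}\frac{\Phi(r+m)}{\Phi(\lambda+r+m)}$. The first step is to record that $C_\lambda > 0$: by \eqref{C_lambda} it is the convex combination, with weights $\eta$ and $1-\eta$, of $\frac{1+\Phi(r)}{1+\Phi(\lambda+r)}$ and $\frac{1+\Phi(r+m)}{1+\Phi(\lambda+r+m)}$, each of which is strictly positive since $\Phi$ takes non-negative values. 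Consequently $v \mapsto v C_\lambda$ is strictly increasing on $(0,\infty)$.

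Next I would invoke Lemma \ref{Lem:Per:Aux1} with $q = r$: the map $z \mapsto \Lambda^{(r,\lambda)}(z,z)$ is non-decreasing on $\R$, so precomposing with the increasing bijection $V_B \mapsto \log V_B$ shows that $V_B \mapsto \Lambda^{(r,\lambda)}(\log V_B,\log V_B)$ is non-decreasing on $(0,\infty)$. Adding a strictly increasing function, a non-decreasing function, and a constant yields a strictly increasing function, which gives the first assertion.

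For the boundary behaviour, as $V_B \downarrow 0$ we have $\log V_B \downarrow -\infty$, hence $V_B C_\lambda \to 0$, and the limit in Lemma \ref{Lem:Per:Aux1} gives $\Lambda^{(r,\lambda)}(\log V_B,\log V_B) \to 0$ when $V_T > 0$ and $\to \frac{1}{r}\frac{\Phi(r)}{\Phi(\lambda+r)}$ when $V_T = 0$; substituting these into \eqref{Eq:Per:Diag} and using the definition \eqref{LT:Per:K} of $K_\lambda$ produces the two stated cases. As $V_B \uparrow \infty$, the linear term $V_B C_\lambda$ diverges to $+\infty$ because $C_\lambda > 0$, while $\Lambda^{(r,\lambda)}(\log V_B,\log V_B) \geq 0$ and the remaining term is constant, so $\Eq(V_B;V_B) \to \infty$.

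There is essentially no obstacle here: the whole argument is assembled from \eqref{Eq:Per:Diag}, the positivity of $C_\lambda$, and Lemma \ref{Lem:Per:Aux1}, which already contains the monotonicity and the two limits of $z \mapsto \Lambda^{(q,\lambda)}(z,z)$. The only places demanding minor care are verifying $C_\lambda > 0$ from \eqref{C_lambda} and checking that the $V_T = 0$ boundary value is exactly $-PK_\lambda$.
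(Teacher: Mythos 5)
Your proposal is correct and follows exactly the paper's (much terser) argument: positivity of $C_\lambda$ from \eqref{C_lambda}, monotonicity and limits of $z\mapsto\Lambda^{(r,\lambda)}(z,z)$ from Lemma \ref{Lem:Per:Aux1}, and assembly via the decomposition \eqref{Eq:Per:Diag}. The only cosmetic difference is that you read $C_\lambda>0$ off the final convex-combination form in \eqref{C_lambda}, whereas the paper cites $J^{(q,\lambda)}(0;1)\in(0,1)$; these are equivalent.
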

\begin{proof}
Because  $J^{(q,\lambda)}(0;1) \in (0,1)$ for all $q\geq 0$, the term $C_\lambda$
is strictly positive. The proof follows by this fact, by Lemma \ref{Lem:Per:Aux1}, and the form of $\Eq(V_B;V_B)$ given by \eqref{Eq:Per:Diag}.
\end{proof}

\noindent Now by Proposition \ref{Prop:Per:Aux1}, 
we define our candidate optimal threshold $V_B^{*,\lambda}$ as follows.
\begin{enumerate}
\item If $V_T > 0$ or if $V_T=0$ with $K_\lambda > 0$, 
 we set $V_B^{*,\lambda}$ as the unique solution of \eqref{V_B_equality}.
\item Otherwise (i.e. $V_T=0$ with $K_\lambda  \leq 0$), 
we set $V_B^{*,\lambda} =0$.
\end{enumerate}
\subsection{Optimality of $V_B^{*\lambda}$}\label{subsec_optim_vb_lambda}
We now verify that the barrier $V_B^{*,\lambda}$ indeed maximizes the equity value subject to the limited liability constraint. To this end, we will verify the following.
\begin{enumerate}
\item Any level $V_B$ below $V_B^{*,\lambda}$ violates the limited liability condition  \eqref{limited_liability}.
\item $V_B^{*,\lambda}$ achieves a higher equity value than any level $V_B > V_B^{*,\lambda}$ does.
\item $V_B^{*,\lambda}$ fulfills the limited liability constraint  \eqref{limited_liability}.
\end{enumerate}
\begin{prop}\label{Prop:NLLC:Per} Suppose $V_B^{*,\lambda} > 0$. For $V_B < V_B^{*,\lambda}$ the limited liability constraint is not satisfied.
\end{prop}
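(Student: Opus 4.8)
The plan is to mirror the proof of Proposition \ref{Prop:NLLC:Cont} from the continuous-observation case, replacing the derivative-based (smooth-fit) argument with one adapted to the continuous-fit situation. The key structural fact is that, by Proposition \ref{Prop:Per:Aux1}, the diagonal mapping $V_B \mapsto \Eq(V_B;V_B)$ is strictly increasing on $(0,\infty)$, and $V_B^{*,\lambda}$ was defined precisely as the unique zero of this mapping (in the case $V_B^{*,\lambda}>0$). Therefore, for any $V_B < V_B^{*,\lambda}$ we immediately get $\Eq(V_B;V_B) < \Eq(V_B^{*,\lambda};V_B^{*,\lambda}) = 0$.

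Now recall that the statement of the limited liability constraint \eqref{limited_liability} requires $\Eq(V;V_B)\geq 0$ for \emph{all} $V \geq V_B$, in particular at $V = V_B$ itself. Since we have just shown $\Eq(V_B;V_B) < 0$, the constraint fails at $V=V_B$, so the proof is essentially one line once the monotonicity of the diagonal map is in hand. I would write it as: fix $V_B < V_B^{*,\lambda}$; by Proposition \ref{Prop:Per:Aux1} the map $v \mapsto \Eq(v;v)$ is strictly increasing, and by definition of $V_B^{*,\lambda}$ we have $\Eq(V_B^{*,\lambda};V_B^{*,\lambda})=0$; hence $\Eq(V_B;V_B) < 0$, so taking $V = V_B$ shows that \eqref{limited_liability} is violated.

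The one subtlety to check — and the only place where anything could go wrong — is whether $\Eq(V;V_B)$, as given by \eqref{Eq:Per:Obs}, is genuinely continuous in $V$ down to $V = V_B$ and agrees there with the diagonal expression \eqref{Eq:Per:Diag}; this is needed so that "$\Eq(V_B;V_B)<0$" is a legitimate evaluation of the equity function rather than a boundary artifact. Since the functions $J^{(q,\lambda)}(\cdot;\beta)$ and $\Lambda^{(r,\lambda)}(\cdot,\log V_B)$ are continuous (indeed, built from scale functions which are continuous on $[0,\infty)$ by Remark \ref{remark_smoothness_scale_function}), this is immediate, and \eqref{Eq:Per:Diag} is exactly the value of \eqref{Eq:Per:Obs} at $V = V_B$. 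So I do not anticipate a real obstacle here; unlike the continuous-observation case, no argument about the sign of a derivative near the boundary is needed, because the continuous-fit construction of $V_B^{*,\lambda}$ already encodes the relevant information directly at the diagonal. If one wanted to be thorough, one could additionally remark that the same strict monotonicity shows $\Eq(V_B;V_B)>0$ for $V_B>V_B^{*,\lambda}$, which foreshadows the next proposition, but this is not required for the present statement.
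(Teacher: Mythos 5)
Your proof is correct and is essentially identical to the paper's: both invoke the strict monotonicity of $V_B\mapsto\Eq(V_B;V_B)$ from Proposition \ref{Prop:Per:Aux1} together with $\Eq(V_B^{*,\lambda};V_B^{*,\lambda})=0$ to conclude $\Eq(V_B;V_B)<0$, violating \eqref{limited_liability} at $V=V_B$. Your extra remark on the continuity of the expressions down to the diagonal is a harmless elaboration the paper leaves implicit.
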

\begin{proof}
By the (strict) monotonicity as in Proposition \ref{Prop:Per:Aux1} and because  $\mathcal{E}(V_B^{*,\lambda}; V_B^{*,\lambda}) =0$ (given that $V_B^{*,\lambda} > 0$), we have $\mathcal{E}(V_B ;V_B) < 0$ for $V_B < V_B^{*,\lambda}$, and hence the constraint fails to hold.
\end{proof}
\noindent The following results describe the behavior of the mapping $V_B \mapsto \Eq(V;V_B)$ on $(V_B^{*,\lambda},V)$.
\begin{lemma}\label{Prop:EqDeriv:Per} For $V_B \in (0,V)$, we have
\[
\frac{\partial \Eq}{\partial V_B}(V;V_B) = -\frac{1}{V_B} \left( \Phi(\lambda + r +m) - \Phi(r +m) \right) \left( \frac{V_B}{V} \right)^{\Phi(r+m)} L(V, V_B),
\]
where, 
for $x \geq z >0$,
\begin{equation*}
\begin{split}
L(x, z) &:= \frac{\Phi(\lambda + r) - \Phi(r)}{\Phi(\lambda + r +m) - \Phi(r +m)} \left( \frac{x}{z} \right)^{\Phi(r+m)-\Phi(r) } \left[ P\kappa\rho \Lambda^{(r,\lambda)}(\log z, \log z) + \eta z \left( \frac{1 + \Phi(r)}{1 + \Phi( \lambda + r  )} \right) \right] \\
& \quad + (1 -\eta) z \left( \frac{1 + \Phi(r+m)}{ 1+ \Phi( \lambda + r + m )} \right) - \frac{P(\rho + m)}{r +m }\frac{\Phi(r+m)}{\Phi(\lambda + r+m)}.
\end{split}
\end{equation*}

\end{lemma}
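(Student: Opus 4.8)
The plan is to differentiate the diagonal-free expression for the equity value \eqref{Eq:Per:Obs} with respect to the barrier $V_B$, using the explicit form of $J^{(q,\lambda)}$ from Lemma \ref{1st:FI:Per_lemma} and of $\Lambda^{(r,\lambda)}$ from Proposition \ref{2nd:FI:Per_lemma}, and then to collect the resulting terms into the claimed product form. First I would record the partial derivatives of the building blocks. From \eqref{1st:FI:Per}, $J^{(q,\lambda)}\!\left(\log\frac{V}{V_B};\beta\right) = \frac{\Phi(\lambda+q)-\Phi(q)}{\beta+\Phi(\lambda+q)}\left(\frac{V_B}{V}\right)^{\Phi(q)}$, so differentiating in $V_B$ brings down a factor $\Phi(q)/V_B$ and leaves the same power $\left(V_B/V\right)^{\Phi(q)}$; in particular the $\beta=1$ terms contribute multiples of $\left(V_B/V\right)^{\Phi(r)}$ and $\left(V_B/V\right)^{\Phi(r+m)}$, and the $\beta=0$ term in the debt contributes a multiple of $\left(V_B/V\right)^{\Phi(r+m)}$. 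For the tax term I would use the identity $\Lambda^{(r,\lambda)}(\log V,\log V_B)$ from \eqref{Res:Dens:Tax} with $x=\log V$, $z=\log V_B$: the only $V_B$-dependence sits in the factors $J^{(r,\lambda)}(\log\frac{V}{V_B};0)$, $e^{\Phi(r)(\log V_B-\log V)}=\left(V_B/V\right)^{\Phi(r)}$, and the arguments $\overline W^{(r)}(\log V_T-\log V_B)$ and $Z^{(r)}(\log V_T-\log V_B;\Phi(\lambda+r))$, while the last term $\overline W^{(r)}(\log V_T-\log V)$ is $V_B$-free. Differentiating, the $\overline W^{(r)}{}'=W^{(r)}$ and $Z^{(r)}{}'$ contributions can be checked to cancel against each other up to the $\left(V_B/V\right)^{\Phi(r)}$ prefactor, exactly as in the passage leading to Remark \ref{remark_Lambda}, leaving $\frac{\partial}{\partial V_B}\Lambda^{(r,\lambda)}(\log V,\log V_B)$ proportional to $\frac{1}{V_B}\left(V_B/V\right)^{\Phi(r)}\Lambda^{(r,\lambda)}(\log V_B,\log V_B)$ plus a $\frac{1}{V_B}\left(V_B/V\right)^{\Phi(r)}$-multiple of the bracket in Remark \ref{remark_Lambda}.

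Next I would assemble these pieces. After differentiation every term in $\frac{\partial\Eq}{\partial V_B}(V;V_B)$ carries either a factor $\left(V_B/V\right)^{\Phi(r)}$ or a factor $\left(V_B/V\right)^{\Phi(r+m)}$, and a factor $1/V_B$. The coefficient of $\left(V_B/V\right)^{\Phi(r)}$ turns out to be $\frac{1}{V_B}(\Phi(\lambda+r)-\Phi(r))$ times $\bigl[P\kappa\rho\,\Lambda^{(r,\lambda)}(\log V_B,\log V_B)+\eta V_B\frac{1+\Phi(r)}{1+\Phi(\lambda+r)}\bigr]$ — here one uses that the $\Lambda$-derivative supplies the $P\kappa\rho$-term and the $\beta=1$, $q=r$ term in \eqref{Eq:Per:Obs} supplies the $\eta$-term, and that the factor $\Phi(\lambda+r)-\Phi(r)$ is common to both via \eqref{1st:FI:Per}. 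The coefficient of $\left(V_B/V\right)^{\Phi(r+m)}$ is $\frac{1}{V_B}(\Phi(\lambda+r+m)-\Phi(r+m))$ times $\bigl[(1-\eta)V_B\frac{1+\Phi(r+m)}{1+\Phi(\lambda+r+m)}-\frac{P(\rho+m)}{r+m}\frac{\Phi(r+m)}{\Phi(\lambda+r+m)}\bigr]$, where the two contributions come from the $\beta=1$, $q=r+m$ term and the $\beta=0$, $q=r+m$ term in \eqref{Eq:Per:Obs}. Finally I would factor out $-\frac{1}{V_B}(\Phi(\lambda+r+m)-\Phi(r+m))\left(V_B/V\right)^{\Phi(r+m)}$ from the whole expression; the leftover is precisely $L(V,V_B)$, with the $\left(V_B/V\right)^{\Phi(r)}$ terms becoming the $\left(\frac{x}{z}\right)^{\Phi(r+m)-\Phi(r)}$-weighted bracket after dividing by $\left(V_B/V\right)^{\Phi(r+m)}$ and writing $\left(V_B/V\right)^{\Phi(r)-\Phi(r+m)}=\left(V/V_B\right)^{\Phi(r+m)-\Phi(r)}$.

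The routine but delicate part — the step I expect to be the main obstacle — is verifying the cancellation inside $\frac{\partial}{\partial V_B}\Lambda^{(r,\lambda)}(\log V,\log V_B)$: one must show that the terms involving $W^{(r)}(\log V_T-\log V_B)$ (from differentiating $\overline W^{(r)}$) and the corresponding terms from differentiating $Z^{(r)}(\,\cdot\,;\Phi(\lambda+r))$ combine so that what survives is exactly the $\Lambda^{(r,\lambda)}(\log V_B,\log V_B)$ block identified in Remark \ref{remark_Lambda}, times the prefactor $\frac{1}{V_B}\left(V_B/V\right)^{\Phi(r)}$, with no residual scale-function terms. This uses the defining relation \eqref{2nd:Scale:Fn} for $Z^{(q)}$, namely $\frac{d}{du}Z^{(q)}(u;\theta)=\theta Z^{(q)}(u;\theta)+(q-\psi(\theta))W^{(q)}(u)$ evaluated at $\theta=\Phi(\lambda+q)$ so that $q-\psi(\theta)=-\lambda$, which is what makes the $W^{(r)}$-terms drop out. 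Once that cancellation is in hand, the remaining algebra is bookkeeping, and the stated formula for $L(x,z)$ follows by grouping the $\Phi(r)$- and $\Phi(r+m)$-homogeneous parts. The hypothesis $x\ge z>0$ is only used to ensure the logarithms and the barrier comparison are well defined and that $\Lambda^{(r,\lambda)}$ is evaluated in its stated regime.
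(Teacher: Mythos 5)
Your proposal matches the paper's proof in both structure and substance: you differentiate \eqref{Eq:Per:Obs} term by term, use the explicit form of $J^{(q,\lambda)}$, and for the tax term exploit the identity $Z^{(r)\prime}(x;\Phi(\lambda+r))=\Phi(\lambda+r)Z^{(r)}(x;\Phi(\lambda+r))-\lambda W^{(r)}(x)$ (the paper's \eqref{2nd:Scale:Deriv}) so that the $W^{(r)}$ contributions cancel and $\frac{\partial}{\partial z}\Lambda^{(r,\lambda)}(x,z)$ collapses to $-(\Phi(\lambda+r)-\Phi(r))e^{-\Phi(r)(x-z)}\Lambda^{(r,\lambda)}(z,z)$ via Remark \ref{remark_Lambda}, exactly as in the paper. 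The remaining regrouping into the $\Phi(r)$- and $\Phi(r+m)$-homogeneous parts and the factoring that yields $L(V,V_B)$ is the same bookkeeping the paper carries out, so the argument is correct and essentially identical.
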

\begin{proof}
First, for $x \geq 0$ 
 and $q,\beta \geq 0$, the derivative of \eqref{1st:FI:Per} is given by
\begin{equation}\label{Opt:Per:Aux0}
J^{(q,\lambda) \prime}(x;\beta) = -\frac{ \Phi(q) (\Phi(\lambda + q) - \Phi(q))}{\beta + \Phi(\lambda + q)} e^{-\Phi(q) x},
\end{equation}
and hence
\begin{equation}\label{Opt:Per:Aux1}
J^{(q,\lambda)}(x;\beta) - J^{(q,\lambda) \prime}(x;\beta) = (\Phi(\lambda + q) - \Phi(q)) e^{- \Phi(q) x} \left( \frac{ 1 + \Phi(q)}{\beta + \Phi(\lambda + q)} \right).
\end{equation}
From the definition of $Z^{(q)}(\cdot ; \theta)$ given in \eqref{2nd:Scale:Fn}, we get the identity
\begin{equation}\label{2nd:Scale:Deriv}
Z^{(r) \prime}(x;\Phi(\lambda+r)) = \Phi(\lambda+r) Z^{(r)}( x ;\Phi(\lambda+r)) -\lambda W^{(r)}( x ).
\end{equation}
Hence, by differentiating \eqref{Res:Dens:Tax}, 
substituting \eqref{2nd:Scale:Deriv}, and grouping terms, we get 
\begin{align}
\frac{\partial }{\partial z} \Lambda^{(r,\lambda)}(x,z) &= -(\Phi(\lambda + r) - \Phi(r))\frac{\Phi(r)}{\Phi(\lambda +r)} e^{-\Phi(r)(x-z)} \left( \frac{1}{r} + \overline{W}^{(r)}( \log V_T - z) \Ind_{\lbrace V_T > 0 \rbrace} \right) \nonumber\\
& +   \frac{1}{\lambda}\frac{(\Phi(\lambda + r) - \Phi(r))^2}{\Phi(\lambda + r)} e^{-\Phi(r)( x-z)} Z^{(r)}( \log V_T - z;\Phi(\lambda + r)) \Ind_{\lbrace V_T > 0 \rbrace} \nonumber\\
 &= - (\Phi(\lambda + r) - \Phi(r)) e^{-\Phi(r)( x-z)} \Lambda ^{(r,\lambda)}(z,z), \label{Opt:Per:Aux2}
\end{align}
where the last equality holds by Remark \ref{remark_Lambda}.
Now, from expression \eqref{Eq:Per:Obs}, and by the chain rule we have
\begin{align*}
\frac{\partial \Eq}{\partial V_B}(V;V_B) &= \frac{1}{V_B} P\kappa\rho \frac{\partial \Lambda^{(r,\lambda)}}{\partial z} (\log V,\log V_B) - \eta \left( J^{(r,\lambda)}\left(\log \frac{V}{V_B};1\right) - J^{(r,\lambda) \prime} \left( \log \frac{V}{V_B};1 \right) \right)\\
&\quad - (1-\eta) \left( J^{(r+m,\lambda)}\left(\log \frac{V}{V_B};1\right) - J^{(r+m,\lambda) \prime} \left( \log \frac{V}{V_B};1 \right) \right) \\
&\quad - \frac{1}{V_B} \frac{P(\rho + m)}{r+m} J^{(r+m,\lambda) \prime}\left(\log \frac{V}{V_B};0\right).
\end{align*}
The result now follows by substituting expressions \eqref{Opt:Per:Aux0}--\eqref{Opt:Per:Aux2} for $q = r, r+m$. 
\end{proof}
\noindent The next result shows that $V_B^{*,\lambda}$ attains a higher equity value than any other $V_B > V_B^{*,\lambda}$ does.
\begin{prop}\label{Prop:Deriv:Opt} 
	For $V_B \in (V_B^{*,\lambda},V)$, we have $\frac{\partial \Eq}{\partial V_B} (V;V_B)< 0$. Hence $\Eq (V;V_B) < \Eq(V;V_B^{*,\lambda})$ for all $V > V_B$. 
\end{prop}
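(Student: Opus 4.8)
The plan is to show that the sign of $\frac{\partial \Eq}{\partial V_B}(V;V_B)$ is controlled entirely by the sign of $L(V,V_B)$, and then to prove $L(V,V_B) > 0$ for all $V_B \in (V_B^{*,\lambda}, V)$. From Lemma \ref{Prop:EqDeriv:Per} we have
\[
\frac{\partial \Eq}{\partial V_B}(V;V_B) = -\frac{1}{V_B}\left(\Phi(\lambda+r+m) - \Phi(r+m)\right)\left(\frac{V_B}{V}\right)^{\Phi(r+m)} L(V,V_B),
\]
and since $\Phi$ is strictly increasing, $\lambda > 0$, and the prefactor $\frac{1}{V_B}\left(\frac{V_B}{V}\right)^{\Phi(r+m)}$ is strictly positive, the claim $\frac{\partial \Eq}{\partial V_B}(V;V_B) < 0$ is equivalent to $L(V,V_B) > 0$. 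So the whole proof reduces to establishing positivity of $L$ on the relevant range.

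First I would establish the monotonicity of $x \mapsto L(x,z)$ in its first argument for fixed $z$. Inspecting the definition of $L$, the only $x$-dependence is through the factor $\left(\frac{x}{z}\right)^{\Phi(r+m)-\Phi(r)}$ multiplying a bracket that is manifestly non-negative (it equals $P\kappa\rho\Lambda^{(r,\lambda)}(\log z,\log z) + \eta z\frac{1+\Phi(r)}{1+\Phi(\lambda+r)}$, with $\Lambda^{(r,\lambda)}(\log z, \log z) \geq 0$ by Remark \ref{remark_Lambda} together with Lemma \ref{Lem:Per:Aux1}). Since $\Phi(r+m) > \Phi(r)$ and $x \geq z$, that factor is non-decreasing in $x$ and at least $1$; hence $L(x,z) \geq L(z,z)$. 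Therefore it suffices to show $L(z,z) > 0$ for $z = V_B \in (V_B^{*,\lambda}, V)$, and in fact $L(z,z) \geq 0$ with equality exactly at the continuous-fit level.

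The key step is then to identify $L(z,z)$ with a constant multiple of $\Eq(z;z)$. Setting $x = z$ in $L$, the power factor becomes $1$, and $L(z,z)$ is an affine combination (with coefficients built from $\Phi$ at $r$ and $r+m$ and the relevant $\lambda$-shifts) of $P\kappa\rho\Lambda^{(r,\lambda)}(\log z,\log z)$, $\eta z$, $(1-\eta)z$, and $\frac{P(\rho+m)}{r+m}\frac{\Phi(r+m)}{\Phi(\lambda+r+m)}$. Comparing with formula \eqref{Eq:Per:Diag} for $\Eq(V_B;V_B)$ — which is exactly $V_B C_\lambda + P\kappa\rho\Lambda^{(r,\lambda)}(\log V_B,\log V_B) - \frac{(\rho+m)P}{r+m}\frac{\Phi(r+m)}{\Phi(\lambda+r+m)}$ with $C_\lambda$ as in \eqref{C_lambda} — I expect to verify, after collecting the $\Phi$-coefficients, that $L(z,z) = c\,\Eq(z;z)$ for some strictly positive constant $c$ independent of $z$ (heuristically $c = \frac{\Phi(\lambda+r) - \Phi(r)}{\Phi(\lambda+r+m)-\Phi(r+m)}$ or a close variant, the ratio arising because $\Lambda^{(r,\lambda)}(\log z,\log z)$ enters $L$ pre-multiplied by that factor but enters $\Eq(z;z)$ with coefficient $1$). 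Once this identity is in hand, Proposition \ref{Prop:Per:Aux1} (strict monotonicity of $V_B \mapsto \Eq(V_B;V_B)$) and the defining property $\Eq(V_B^{*,\lambda};V_B^{*,\lambda}) = 0$ give $\Eq(z;z) > 0$ for $z > V_B^{*,\lambda}$, hence $L(z,z) > 0$, hence $L(V,V_B) > 0$, hence $\frac{\partial \Eq}{\partial V_B}(V;V_B) < 0$ on $(V_B^{*,\lambda}, V)$. Integrating this strict inequality over $V_B$ from $V_B^{*,\lambda}$ (using continuity of $V_B \mapsto \Eq(V;V_B)$) yields $\Eq(V;V_B) < \Eq(V;V_B^{*,\lambda})$ for all $V > V_B > V_B^{*,\lambda}$.

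The main obstacle I anticipate is purely bookkeeping: verifying the proportionality $L(z,z) = c\,\Eq(z;z)$ requires carefully matching several terms with coefficients that are rational expressions in $\Phi$ evaluated at $r$, $r+m$, $\lambda+r$, $\lambda+r+m$, and confirming the constant $c$ is positive. A secondary subtlety is the case $V_B^{*,\lambda} = 0$ (when $V_T = 0$ and $K_\lambda \leq 0$): there $\Eq(\cdot;0)$ is given by the closed form analogous to \eqref{Eq:Val:VB_0} and the inequality for $V_B > 0 = V_B^{*,\lambda}$ follows from the same monotonicity argument, so no separate difficulty arises, but it should be mentioned. If the clean proportionality fails to hold exactly, the fallback is to bound $L(z,z)$ from below directly: drop the non-negative $\Lambda$-term entirely and show the remaining affine-in-$z$ expression is non-negative for $z \geq V_B^{*,\lambda}$ by comparing its root with the continuous-fit equation — but I expect the exact identity to go through.
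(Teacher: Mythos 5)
Your overall architecture matches the paper's: reduce to $L(V,V_B)>0$, use the non-negativity of the bracket and $(V/V_B)^{\Phi(r+m)-\Phi(r)}\geq 1$ to pass to the diagonal, and then compare with $\Eq(V_B;V_B)$ via Proposition \ref{Prop:Per:Aux1}. However, the key step you lean on — the exact proportionality $L(z,z)=c\,\Eq(z;z)$ — is false. Matching coefficients forces $c=\frac{\Phi(\lambda+r)-\Phi(r)}{\Phi(\lambda+r+m)-\Phi(r+m)}$ from the $\Lambda$-term but $c=1$ from the constant term $-\frac{P(\rho+m)}{r+m}\frac{\Phi(r+m)}{\Phi(\lambda+r+m)}$ and from the $(1-\eta)z$-term, which carry no such ratio; since that ratio is strictly greater than $1$, no single constant works (except in the degenerate case $\eta=1$ with no $\Lambda$-term). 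Your stated fallback also fails when $V_T>0$: dropping the non-negative $\Lambda$-term from $L(z,z)$ leaves an expression bounded below by $\Eq(z;z)-P\kappa\rho\Lambda^{(r,\lambda)}(\log z,\log z)$, which is strictly negative at $z=V_B^{*,\lambda}$ (where $\Eq(z;z)=0$) and hence, by continuity, for $z$ slightly above it; so the root of the affine remainder does not sit at the continuous-fit level.

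The repair is a one-line inequality rather than an identity, and it requires an ingredient you never invoke: the \emph{concavity} of $\Phi$. Concavity gives $\Phi(\lambda+r)-\Phi(r)\geq\Phi(\lambda+r+m)-\Phi(r+m)$, i.e.
\[
\frac{\Phi(\lambda + r) - \Phi(r)}{\Phi(\lambda + r +m) - \Phi(r +m)} \left( \frac{V}{V_B} \right)^{ \Phi(r+m)-\Phi(r)} > 1 ,
\]
so that replacing the entire prefactor of the non-negative bracket by $1$ only decreases $L$, yielding directly $L(V,V_B)>P\kappa\rho\Lambda^{(r,\lambda)}(\log V_B,\log V_B)+V_B C_\lambda-\frac{P(\rho+m)}{r+m}\frac{\Phi(r+m)}{\Phi(\lambda+r+m)}=\Eq(V_B;V_B)>0$ by \eqref{Eq:Per:Diag}, \eqref{C_lambda} and Proposition \ref{Prop:Per:Aux1}. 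This is exactly the paper's argument; with the false identity replaced by this inequality, the rest of your proposal (monotonicity in the first argument, integration over $V_B$, and the remark on $V_B^{*,\lambda}=0$) goes through.
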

\begin{proof}
In view of Lemma \ref{Prop:EqDeriv:Per} and since the term $\frac{1}{V_B} \left( \Phi(\lambda + r +m) - \Phi(r +m) \right) \left( \frac{V_B}{V} \right)^{\Phi(r+m)}$ is non-negative
it is sufficient to show that
\[
L( V,V_B) > 0.
\]
Because $\Phi$ is strictly increasing and concave (see, e.g., Appendix C.3 in \cite{palmowski2019leland}) and $V>V_B$ 
\[ \frac{\Phi(\lambda + r) - \Phi(r)}{\Phi(\lambda + r +m) - \Phi(r +m)} \left( \frac{V_B}{V} \right)^{ \Phi(r)-\Phi(r+m)} > 1, \]
and  hence, with \eqref{Eq:Per:Diag} and \eqref{C_lambda}, 
\begin{align*}
L( V,V_B) & > P\kappa\rho \Lambda^{(r,\lambda)}(\log V_B,\log V_B) - \frac{P(\rho + m)}{r +m }\frac{\Phi(r+m)}{\Phi(\lambda + r+m)} + V_B C_\lambda =  \Eq(V_B;V_B),
\end{align*}
which is positive by Proposition \ref{Prop:Per:Aux1}. This completes the proof.
\end{proof}
Now it is left to show that the optimal barrier $V_B^{*,\lambda}$ satisfies the \textit{limited liability condition}.
\begin{lemma}\label{Prop:Deriv:Feas} For $V > V_B > 0$ with $V \neq V_T$, we have
\begin{align*}
\frac{\partial \Eq}{\partial V}(V;V_B) &= 1 - \frac{V_B}{V} \left[ \frac{\partial \Eq}{\partial V_B}(V;V_B) + \eta J^{(r,\lambda)}\left( \log \frac{V}{V_B};1 \right) + (1 - \eta) J^{(r +m,\lambda)}\left( \log \frac{V}{V_B};1 \right) \right] \\
&\quad + \frac{1}{V} P\kappa\rho R^{(r,\lambda)}\left( \log \frac{V_B}{V}, \log\frac{V_B}{V_T} \right) \Ind_{\lbrace V_T > 0 \rbrace},
\end{align*}
where $R^{(r,\lambda)}$ is defined in 
Appendix \ref{app_fi}. 
\end{lemma}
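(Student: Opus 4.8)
The plan is to obtain both sides of the claimed identity by differentiating the closed form \eqref{Eq:Per:Obs} of $\Eq(V;V_B)$, and then to reconcile them using the expression for $\frac{\partial\Eq}{\partial V_B}$ already derived inside the proof of Lemma~\ref{Prop:EqDeriv:Per}. Writing $\frac{\partial\Lambda^{(r,\lambda)}}{\partial x}$ for the derivative of $\Lambda^{(r,\lambda)}(x,z)$ in its first argument, and using $\frac{\partial}{\partial V}\log V=\frac{\partial}{\partial V}\log\frac{V}{V_B}=\frac1V$ together with the derivative $J^{(q,\lambda)\prime}$ given by \eqref{Opt:Per:Aux0}, the chain rule applied to \eqref{Eq:Per:Obs} gives
\[
\frac{\partial\Eq}{\partial V}(V;V_B) = 1 + \frac{P\kappa\rho}{V}\frac{\partial\Lambda^{(r,\lambda)}}{\partial x}(\log V,\log V_B) - \frac{\eta V_B}{V}J^{(r,\lambda) \prime}\!\Big(\log\tfrac{V}{V_B};1\Big) + \frac{(\rho+m)P}{(r+m)V}J^{(r+m,\lambda) \prime}\!\Big(\log\tfrac{V}{V_B};0\Big) - \frac{(1-\eta)V_B}{V}J^{(r+m,\lambda) \prime}\!\Big(\log\tfrac{V}{V_B};1\Big),
\]
where $\frac{\partial\Lambda^{(r,\lambda)}}{\partial x}$ is obtained by differentiating \eqref{Res:Dens:Tax} and using $\overline{W}^{(q)\prime}=W^{(q)}$; the exclusion $V\neq V_T$ is needed precisely because the summand $\overline{W}^{(r)}(\log V_T-x)$ contributes $-W^{(r)}(\log V_T-x)$, which has a jump at $x=\log V_T$ when $X$ has bounded variation.

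Next I would recall from the proof of Lemma~\ref{Prop:EqDeriv:Per} that, since $\frac{\partial}{\partial V_B}\log\frac{V}{V_B}=-\frac1{V_B}$,
\[
\frac{\partial\Eq}{\partial V_B}(V;V_B) = \frac{P\kappa\rho}{V_B}\frac{\partial\Lambda^{(r,\lambda)}}{\partial z}(\log V,\log V_B) - \eta\big(J^{(r,\lambda)}-J^{(r,\lambda) \prime}\big)\!\Big(\log\tfrac{V}{V_B};1\Big) - (1-\eta)\big(J^{(r+m,\lambda)}-J^{(r+m,\lambda) \prime}\big)\!\Big(\log\tfrac{V}{V_B};1\Big) - \frac{(\rho+m)P}{(r+m)V_B}J^{(r+m,\lambda) \prime}\!\Big(\log\tfrac{V}{V_B};0\Big).
\]
Forming the combination $\frac{\partial\Eq}{\partial V}+\frac{V_B}{V}\frac{\partial\Eq}{\partial V_B}+\frac{V_B}{V}\big[\eta J^{(r,\lambda)}(\log\tfrac{V}{V_B};1)+(1-\eta)J^{(r+m,\lambda)}(\log\tfrac{V}{V_B};1)\big]$, the two $J^{(r+m,\lambda) \prime}(\cdot\,;0)$ terms cancel (the two chain rules produce opposite signs), and for each rate $q\in\{r,r+m\}$ the pieces $-\frac{V_B}{V}J^{(q,\lambda) \prime}(\cdot\,;1)$, $\frac{V_B}{V}\big(J^{(q,\lambda)}-J^{(q,\lambda) \prime}\big)(\cdot\,;1)$ and $-\frac{V_B}{V}J^{(q,\lambda)}(\cdot\,;1)$ add up to zero. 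Thus the combination reduces to $1+\frac{P\kappa\rho}{V}\big(\frac{\partial\Lambda^{(r,\lambda)}}{\partial x}+\frac{\partial\Lambda^{(r,\lambda)}}{\partial z}\big)(\log V,\log V_B)$, and the Lemma becomes equivalent to the single identity
\[
\Big(\frac{\partial\Lambda^{(r,\lambda)}}{\partial x}+\frac{\partial\Lambda^{(r,\lambda)}}{\partial z}\Big)(\log V,\log V_B) = R^{(r,\lambda)}\!\Big(\log\tfrac{V_B}{V},\log\tfrac{V_B}{V_T}\Big)\Ind_{\{V_T>0\}}.
\]

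Verifying this last identity is the step I expect to require the most care. I would combine $\frac{\partial\Lambda^{(r,\lambda)}}{\partial z}(x,z)=-(\Phi(\lambda+r)-\Phi(r))e^{-\Phi(r)(x-z)}\Lambda^{(r,\lambda)}(z,z)$ (established in the proof of Lemma~\ref{Prop:EqDeriv:Per}, with $\Lambda^{(r,\lambda)}(z,z)$ as in Remark~\ref{remark_Lambda}) with the explicit $\frac{\partial\Lambda^{(r,\lambda)}}{\partial x}$ found above; the $\tfrac1r$ term, the $\overline{W}^{(r)}(\log V_T-z)$ term, and one of the two $Z^{(r)}(\log V_T-z;\Phi(\lambda+r))$ terms cancel, while the remaining $Z^{(r)}$ contributions combine through $\Phi(r)+(\Phi(\lambda+r)-\Phi(r))=\Phi(\lambda+r)$ to leave
\[
\Big(\frac{\partial\Lambda^{(r,\lambda)}}{\partial x}+\frac{\partial\Lambda^{(r,\lambda)}}{\partial z}\Big)(x,z)=\Big(\frac{\Phi(\lambda+r)-\Phi(r)}{\lambda}\,e^{-\Phi(r)(x-z)}Z^{(r)}(\log V_T-z;\Phi(\lambda+r)) - W^{(r)}(\log V_T-x)\Big)\Ind_{\{V_T>0\}};
\]
substituting $x=\log V$, $z=\log V_B$ (so that $x-z=\log\tfrac{V}{V_B}$, $\log V_T-z=-\log\tfrac{V_B}{V_T}$ and $\log V_T-x=\log\tfrac{V_B}{V}-\log\tfrac{V_B}{V_T}$) turns the right-hand side into the function $R^{(r,\lambda)}$ defined in Appendix~\ref{app_fi}. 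The only real obstacle is the bookkeeping: handling the two rates $q=r,r+m$ in parallel, keeping the opposite chain-rule signs straight, and carrying the indicator $\Ind_{\{V_T>0\}}$ and the kink of $W^{(r)}$ at $0$ (whence $V\neq V_T$) correctly through the computation of $\partial_x\Lambda^{(r,\lambda)}$.
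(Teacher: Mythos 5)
Your proposal is correct and follows essentially the same route as the paper: the paper also splits $\Eq$ into a part built from the $J$-terms (whose $x$- and $z$-derivatives satisfy exactly the cancellation you exhibit) plus $P\kappa\rho\,\Lambda^{(r,\lambda)}$, and reduces the lemma to the identity $(\partial_x+\partial_z)\Lambda^{(r,\lambda)}(x,z)=R^{(r,\lambda)}(z-x,\,z-\log V_T)\Ind_{\{V_T>0\}}$, which you verify correctly. (One small wording slip: in the final cancellation neither $Z^{(r)}$ term vanishes — the two contributions with coefficients $\Phi(r)$ and $\Phi(\lambda+r)-\Phi(r)$ simply combine to $\Phi(\lambda+r)$ — but your displayed formula is the right one, so this does not affect the argument.)
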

\begin{proof}
We write the equity value \eqref{Eq:Per:Obs} as
\[
\Eq(V;V_B) = A(\log V,\log V_B) + P\kappa\rho \Lambda(\log V,\log V_B),
\]
where 
\[
A(x,z) := e^{x} - e^z  \big(\eta J^{(r,\lambda)}(x-z;1) + (1 -\eta ) J^{(r+m,\lambda)}(x-z;1)\big) - \frac{P(\rho + m)}{r +m }(1 - J^{(r+m,\lambda)}(x-z;0)).
\]
Here, the following relationship holds
\begin{equation}\label{Feas:Aux1}
\frac{\partial A}{\partial x}(x,z) = e^{x} - \frac{\partial A}{\partial z}(x,z) - e^z \left( \eta J^{(r,\lambda)}(x-z;1) + (1 -\eta )J^{(r+m,\lambda)}(x-z;1) \right).
\end{equation}
On the other hand, for $x > z$ with $x\not=\log V_T$, differentiating \eqref{Res:Dens:Tax} and by \eqref{Opt:Per:Aux2} and \eqref{Res:Dens:PO}, 
\begin{align}
\frac{\partial \Lambda^{(r,\lambda)}}{\partial x}(x,z) &= \frac{\Phi(r)}{\Phi(\lambda + r)}(\Phi(\lambda + r) - \Phi(r))e^{-\Phi(r)(x-z)} \left( \frac{1}{r} + \Ind_{\lbrace V_T > 0 \rbrace} \overline{W}^{(r)}(\log V_T -z) \right) \nonumber\\
&+ \frac{\Phi(r)}{\lambda\Phi(\lambda + r)}(\Phi(\lambda + r) - \Phi(r))e^{-\Phi(r)(x-z)} Z^{(r)}(\log V_T -z;\Phi(\lambda +r)) \Ind_{\lbrace V_T > 0 \rbrace} \nonumber \\
&-W^{(r)}(\log V_T -x)\Ind_{\lbrace V_T > 0 \rbrace} \nonumber\\
&= - \frac{\partial \Lambda^{(r,\lambda)}}{\partial z}(x,z) + R^{(r,\lambda)}(z-x,z-\log V_T ) \Ind_{\lbrace V_T > 0 \rbrace}. \label{Feas:Aux2}
\end{align}
Now the result follows by combining expressions \eqref{Feas:Aux1} and \eqref{Feas:Aux2}, followed by applying the chain rule. 
\end{proof}
\begin{prop}\label{Prop:Feas:Per} The barrier $V_B^{*,\lambda}$ satisfies the limited liability constraint.
\end{prop}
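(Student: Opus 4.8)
The plan is to mimic the three-step argument used for the continuous-observation model in Proposition~\ref{Prop:LLC:Cont}, distinguishing the two cases in the definition of the candidate $V_B^{*,\lambda}$. Suppose first $V_B^{*,\lambda}>0$. Then the continuous-fit identity \eqref{V_B_equality} gives $\Eq(V_B^{*,\lambda};V_B^{*,\lambda})=0$, and $V\mapsto\Eq(V;V_B^{*,\lambda})$ is continuous on $[V_B^{*,\lambda},\infty)$, being assembled from $\Lambda^{(r,\lambda)}$ and the functions $J^{(q,\lambda)}$. Hence it suffices to show that $\frac{\partial\Eq}{\partial V}(V;V_B^{*,\lambda})>0$ for every $V>V_B^{*,\lambda}$ with $V\neq V_T$: a continuous function with strictly positive derivative off the single point $V_T$ is non-decreasing, so starting from the value $0$ at $V_B^{*,\lambda}$ we obtain $\Eq(V;V_B^{*,\lambda})\geq 0$ for all $V\geq V_B^{*,\lambda}$, which is exactly \eqref{limited_liability}.

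To establish this positivity I would substitute into the decomposition of $\frac{\partial\Eq}{\partial V}(V;V_B)$ given by Lemma~\ref{Prop:Deriv:Feas} and sign its three pieces. (i) By \eqref{1st:FI:Per}, for $q\in\{r,r+m\}$ and $x:=\log(V/V_B^{*,\lambda})\geq 0$ one has $J^{(q,\lambda)}(x;1)=\frac{\Phi(\lambda+q)-\Phi(q)}{1+\Phi(\lambda+q)}\,e^{-\Phi(q)x}\in(0,1)$, so the convex combination $\eta J^{(r,\lambda)}(x;1)+(1-\eta)J^{(r+m,\lambda)}(x;1)$ is $<1$; since $V_B^{*,\lambda}/V<1$, the term $1-\frac{V_B^{*,\lambda}}{V}\big[\eta J^{(r,\lambda)}(x;1)+(1-\eta)J^{(r+m,\lambda)}(x;1)\big]$ is strictly positive. (ii) The term $-\frac{V_B^{*,\lambda}}{V}\frac{\partial\Eq}{\partial V_B}(V;V_B^{*,\lambda})$ is non-negative: by Lemma~\ref{Prop:EqDeriv:Per} the sign of $\frac{\partial\Eq}{\partial V_B}(V;V_B^{*,\lambda})$ is opposite to that of $L(V,V_B^{*,\lambda})$, and repeating the estimate from the proof of Proposition~\ref{Prop:Deriv:Opt} at $V_B=V_B^{*,\lambda}$ (which uses only $V>V_B^{*,\lambda}$ and the monotonicity and concavity of $\Phi$) yields $L(V,V_B^{*,\lambda})>\Eq(V_B^{*,\lambda};V_B^{*,\lambda})=0$, hence $\frac{\partial\Eq}{\partial V_B}(V;V_B^{*,\lambda})<0$. (iii) The remaining term $\frac{1}{V}P\kappa\rho\,R^{(r,\lambda)}\big(\log\frac{V_B^{*,\lambda}}{V},\log\frac{V_B^{*,\lambda}}{V_T}\big)\Ind_{\{V_T>0\}}$ is non-negative since $R^{(r,\lambda)}$ is a resolvent density (see Appendix~\ref{app_fi}), exactly as the $W^{(r)}$-term is treated in Proposition~\ref{Prop:LLC:Cont}. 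Adding (i)--(iii) gives $\frac{\partial\Eq}{\partial V}(V;V_B^{*,\lambda})>0$, as needed.

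Next suppose $V_B^{*,\lambda}=0$. By construction of the candidate this happens only when $V_T=0$ and $K_\lambda\leq 0$, with $K_\lambda$ as in \eqref{LT:Per:K}. When $V_B=0$ and $V_T=0$ bankruptcy never occurs, so $\Eq(V;0)=V+\big(\frac{\kappa\rho}{r}-\frac{\rho+m}{r+m}\big)P$, coinciding with \eqref{Eq:Val:VB_0}. It remains to check that the bracket is non-negative. Since $\Phi$ is positive, increasing and concave, the map $q\mapsto\Phi(q)/\Phi(\lambda+q)$ is non-decreasing, so $\frac{\Phi(r+m)}{\Phi(\lambda+r+m)}\geq\frac{\Phi(r)}{\Phi(\lambda+r)}$ and therefore $0\geq K_\lambda\geq\big(\frac{\rho+m}{r+m}-\frac{\kappa\rho}{r}\big)\frac{\Phi(r)}{\Phi(\lambda+r)}$; dividing by $\Phi(r)/\Phi(\lambda+r)>0$ gives $\frac{\kappa\rho}{r}\geq\frac{\rho+m}{r+m}$, whence $\Eq(V;0)\geq V>0$ and \eqref{limited_liability} holds trivially.

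The one genuinely delicate point is step (ii): one must re-run the estimate of Proposition~\ref{Prop:Deriv:Opt} at the endpoint $V_B=V_B^{*,\lambda}$ to fix the sign of $\frac{\partial\Eq}{\partial V_B}$ there (equivalently, obtain it from Proposition~\ref{Prop:Deriv:Opt} together with the continuity in $V_B$ of the explicit formula of Lemma~\ref{Prop:EqDeriv:Per}), and then argue with some care that strict positivity of $\frac{\partial\Eq}{\partial V}(\cdot;V_B^{*,\lambda})$ \emph{only away from} $V=V_T$ still forces $V\mapsto\Eq(V;V_B^{*,\lambda})$ to be non-decreasing on $[V_B^{*,\lambda},\infty)$. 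Everything else is bookkeeping of signs using results already established.
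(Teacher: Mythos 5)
Your proposal is correct and follows essentially the same route as the paper: for $V_B^{*,\lambda}>0$ it combines the decomposition of Lemma~\ref{Prop:Deriv:Feas} with the sign of $\frac{\partial\Eq}{\partial V_B}$ from Lemma~\ref{Prop:EqDeriv:Per}/Proposition~\ref{Prop:Deriv:Opt}, the bound $J^{(q,\lambda)}<1$, and nonnegativity of the resolvent density $R^{(r,\lambda)}$, then concludes by continuity and continuous fit; for $V_B^{*,\lambda}=0$ it uses the monotonicity of $q\mapsto\Phi(q)/\Phi(\lambda+q)$ and $K_\lambda\leq 0$ to get $\frac{\kappa\rho}{r}\geq\frac{\rho+m}{r+m}$. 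Your extra care about extending the estimate of Proposition~\ref{Prop:Deriv:Opt} to the endpoint $V_B=V_B^{*,\lambda}$ is a legitimate refinement of a point the paper passes over silently, not a departure from its argument.
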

\begin{proof}
(i) First, we suppose that $V_B^{*,\lambda} > 0$, and take $V > V_B^{*,\lambda}$. By combining Proposition \ref{Prop:Deriv:Opt} and Lemma \ref{Prop:Deriv:Feas} and using that $J^{(q)} \left( \log \frac{V}{V_B^{*,\lambda}};1 \right) < 1$ in view of \eqref{1st:FI:Per}, and that $R^{(r,\lambda)}$ is the resolvent density (which is nonnegative), we get for $V\not=V_T$
\[
\frac{\partial \Eq}{\partial V}(V;V_B^{*,\lambda}) \geq 1 - \frac{V_B^{*,\lambda}}{V} \geq 0.
\]
The claim follows by applying this, the continuity of the mapping $V\mapsto\Eq(V;V_B^{*,\lambda})$, and by using that $\Eq(V_B^{*,\lambda};V_B^{*,\lambda})=0$.

(ii)  On the other hand when $V_B^{*,\lambda} = 0$, which in turn implies that $V_T=0$, we have
\begin{align*}
\Eq(V;0) &=  V + P \Big( \frac{\kappa \rho}{r} - \frac{\rho + m}{r+m} \Big),\\
\frac{\partial \Eq}{\partial V}(V;0) &= 1 >0.
\end{align*}
By the convexity of $\psi$ on $[0,\infty)$, we have $\Phi(r+m) > \Phi(r) > 0$ and $\delta_1 := \Phi(\lambda + r+m) - \Phi(r+m) < \Phi(\lambda + r) - \Phi(r) =: \delta_2$. Hence
\[
\frac {\Phi(\lambda + r+m)} {\Phi(r+m)} = 1 + \frac {\delta_1} {\Phi(r+m)} < 1 + \frac {\delta_2} {\Phi(r)}  = \frac {\Phi(\lambda + r)} {\Phi(r)},
\]
and therefore $\frac  {\Phi(r+m)} {\Phi(\lambda + r+m)} > \frac {\Phi(r)} {\Phi(\lambda + r)}$.
Now
\[
0 \leq \frac{\kappa \rho}{r}\frac{\Phi(r)}{\Phi(\lambda + r)}  - \frac{\rho + m}{r +m }\frac{\Phi(r+m)}{\Phi(\lambda + r+m)} \leq \frac{\Phi(r)}{\Phi(\lambda + r)} \Big( \frac{\kappa \rho}{r} - \frac{\rho + m}{r +m } \Big),
\]
where the first inequality holds because $V_B^{*,\lambda} = 0$ also means $K_\lambda \leq 0$. This implies $ \frac{P \kappa \rho}{r} - \frac{P(\rho + m)}{r+m} \geq 0$, and hence $\Eq(V;0)$ is nonnegative. 
\end{proof}
\noindent Our main result follows from Propositions \ref{Prop:NLLC:Per}, \ref{Prop:Deriv:Opt},   and \ref{Prop:Feas:Per}. 
\begin{theo} \label{theorem_main_periodic}
The optimal bankruptcy level in the periodic case is given by $V_B^{*,\lambda}$.
\end{theo}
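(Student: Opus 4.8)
The plan is simply to assemble the three propositions just established. Recall that the problem is, for the firm's current asset value $V$, to choose a bankruptcy barrier $V_B\ge 0$ maximizing the equity value $\Eq(V;V_B)$ of \eqref{Eq:Per:Obs} subject to the limited liability constraint \eqref{limited_liability}, i.e.\ $\Eq(V;V_B)\ge 0$ for all $V\ge V_B$. I would establish optimality of $V_B^{*,\lambda}$ in three steps: (a) $V_B^{*,\lambda}$ is admissible; (b) no admissible barrier lies strictly below $V_B^{*,\lambda}$; (c) $V_B^{*,\lambda}$ weakly dominates, in equity value, every barrier above it. Together (a)--(c) say exactly that $V_B^{*,\lambda}$ is admissible and achieves an equity value no smaller than that of any other admissible barrier, which is the assertion of the theorem.

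For \emph{admissibility}, Proposition \ref{Prop:Feas:Per} is precisely the statement that $V_B^{*,\lambda}$ satisfies \eqref{limited_liability}, in both the case $V_B^{*,\lambda}>0$ and the degenerate case $V_B^{*,\lambda}=0$. For step (b), when $V_B^{*,\lambda}>0$, Proposition \ref{Prop:NLLC:Per} rules out every $V_B\in(0,V_B^{*,\lambda})$, so it suffices to compare $V_B^{*,\lambda}$ against barriers $V_B\ge V_B^{*,\lambda}$. For step (c), fix $V_B>V_B^{*,\lambda}$ and an initial value $V$. If $V>V_B$, then Proposition \ref{Prop:Deriv:Opt} gives $\Eq(V;V_B)<\Eq(V;V_B^{*,\lambda})$; letting $V\downarrow V_B$ and using continuity of $V\mapsto\Eq(V;V_B)$ and $V\mapsto\Eq(V;V_B^{*,\lambda})$ (together with the matching of \eqref{Eq:Per:Obs} with the diagonal value \eqref{Eq:Per:Diag} as $V\downarrow V_B$) yields $\Eq(V_B;V_B)\le\Eq(V_B;V_B^{*,\lambda})$ as well; and if $V<V_B$ then $\sigma_{V_B}^{-,\lambda}=0$, so $\Eq(V;V_B)=0$, while $\Eq(V;V_B^{*,\lambda})\ge 0$ either because $V\ge V_B^{*,\lambda}$ (admissibility) or because $V<V_B^{*,\lambda}$ forces $\Eq(V;V_B^{*,\lambda})=0$ too. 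In every case $\Eq(V;V_B^{*,\lambda})\ge\Eq(V;V_B)$, using at $V=V_B^{*,\lambda}$ that $\Eq(V_B^{*,\lambda};V_B^{*,\lambda})=0$ by the defining equation \eqref{V_B_equality}. The argument for $V_B^{*,\lambda}=0$ is the corresponding specialization.

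The genuine difficulty has already been absorbed into Propositions \ref{Prop:NLLC:Per}, \ref{Prop:Deriv:Opt} and \ref{Prop:Feas:Per} --- resting in turn on the joint Laplace transform and resolvent identities of Subsection \ref{fluc_iden_periodic}, the strict monotonicity of $V_B\mapsto\Eq(V_B;V_B)$ in Proposition \ref{Prop:Per:Aux1}, and the derivative formula of Lemma \ref{Prop:EqDeriv:Per} --- so at the level of the theorem there is essentially nothing left but bookkeeping. The one point requiring attention is making the equity comparison uniform over all initial asset values $V$: in particular handling the boundary regimes $V=V_B$ and $V<V_B$, where the equity formula degenerates, and treating $V_B^{*,\lambda}=0$ consistently with the case $V_B^{*,\lambda}>0$.
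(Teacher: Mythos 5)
Your proposal is correct and follows exactly the paper's route: the theorem is obtained by combining Propositions \ref{Prop:NLLC:Per}, \ref{Prop:Deriv:Opt} and \ref{Prop:Feas:Per}, which is precisely your three-step assembly. The extra bookkeeping you supply for the boundary regimes $V \le V_B$ and the degenerate case $V_B^{*,\lambda}=0$ is consistent with the paper's treatment and does not change the argument.
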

\section{Two-stage problem}\label{Sec:TwoStage}

We turn our attention to the problem of determining an optimal value of the face value of debt $P$, say $P^*$, such that the 
firm value is maximized. As pointed out by Chen and Kou \cite{ChenKou09}, this problem is entangled with the problem of determining the optimal bankruptcy-triggering level. 
 We can see this 
in the expressions derived in Subsections \ref{SubSec:Opt:Cont} and \ref{SubSec:Opt:Per}, where $V_B^*$ and $V_B^{*,\lambda}$ can be seen as functions of $P$. 
Here we will find $V_B^*$, $V_B^{*,\lambda}$ and $P^*$ 
according to a two-stage optimization problem similar to those in  Leland \cite{Leland94} and Leland and Toft \cite{LelandToft96}. In our case, we will solve the two-stage optimization problem both in the continuous- and in the periodic-observation cases. Recently, Palmowski et al. \cite{palmowski2019leland} solved it for the spectrally negative case under periodic observations.

The first-stage optimization is the problem of determining the optimal bankruptcy-triggering level which maximizes the equity value, subject to the limited liability constraint. 
From the calculations up to this point, we note that such level depends on the debt level $P$, and hence we will write $V_B^*(P)$ and $V_B^{*,\lambda}(P)$, respectively. Once this bankruptcy level is determined for each $P$, the firm will conduct a second-stage optimization in order to maximize its firm value in terms of $P$. This second-stage optimization is formulated as
\begin{align}
\max_{P} \V(V;V_B^*(P),P), \label{V_maximum}
\end{align}
where we modify the notation of $\V$ to emphasize the dependency on $P$.
\begin{assum} 
	In order to obtain semi-explicit solutions for the two-stage problem, we assume throughout the rest of this section that $V_T=0$, as assumed in \cite{ChenKou09} and \cite{palmowski2019leland}.
\end{assum}

\subsection{Continuous-observation case}
We first consider the continuous-observation case. Here, recall  $K$ defined in \eqref{LT:Cont:K}. By the following theorem, there exists an optimal value of $P^*$ that attains \eqref{V_maximum}.

\begin{theo}[Continuous Observations] \label{theorem_two_stage_continuous}
 In the case of continuous observations, we have the following.
\begin{itemize}
\item[(i)] \textbf{First-stage optimization}. (a) If $K
\leq 0$,  then $V_B^*(P) = 0$ for all $P > 0$.  (b)  Otherwise, define  $\epsilon := \frac{K}{C
}$. Given a debt level $P$, the optimal bankruptcy barrier is given by $V_B^*(P) = \epsilon P$.
\item[(ii)] \textbf{Second-stage optimization}.  (a) If $
K
 \leq 0$, then $\V(V;V_B^*(P),P) = V + \frac{P\kappa\rho}{r}$, which is linear (and hence convex) in $P$.   (b)  Otherwise, for any $V>0$, the firm value $\V(V;V_B^*(P),P)$ is strictly concave in $P$ on $[0, \frac V {\epsilon}] = \{ P \geq 0: V_B^{*}(P) \leq V \}$.
\end{itemize}
\end{theo}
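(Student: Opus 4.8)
The plan is to handle the two stages in turn, leaning on the standing assumption $V_T=0$. For the first-stage claim (i), I would specialize Lemma \ref{lemma_f}(ii): with $V_T=0$ the fixed-point function in \eqref{Aux:FP} collapses to the affine map $f(v)=v-KP/C$ of unit slope. Hence $\{v>0:f(v)>0\}$ equals $(KP/C,\infty)$ when $K>0$ and equals all of $(0,\infty)$ when $K\le 0$. In the first case \eqref{Cont:Equiv:FP} gives $V_B^*(P)=KP/C=\epsilon P$; in the second case we fall into case (2) of the candidate construction, so $V_B^*(P)=0$. This is immediate from the analysis already carried out.

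For the second-stage claim when $K\le 0$ (part (ii)(a)), we have $V_B^*(P)=0$, and I would obtain $\V(V;0,P)$ by letting $V_B\downarrow 0$ in the $V_T=0$ form of \eqref{Mkt:Val}, namely $\V(V;V_B,P)=V+\frac{\kappa\rho P}{r}\bigl(1-(V_B/V)^{\Phi(r)}\bigr)-\eta V_B(V_B/V)^{\Phi(r)}$, using $(V_B/V)^{\Phi(r)}\to 0$ and $V_B(V_B/V)^{\Phi(r)}\to 0$. This yields $\V(V;0,P)=V+P\kappa\rho/r$, which is affine and hence convex in $P$.

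For $K>0$ (part (ii)(b)) I would substitute $V_B=V_B^*(P)=\epsilon P$ into the same expression, obtaining
\[
\V(V;\epsilon P,P)=V+\frac{\kappa\rho}{r}\,P-\Bigl(\frac{\kappa\rho}{r}+\eta\epsilon\Bigr)\frac{\epsilon^{\Phi(r)}}{V^{\Phi(r)}}\,P^{1+\Phi(r)}.
\]
Since $\epsilon=K/C>0$, $\eta\ge 0$, and $\kappa,\rho,r>0$, the coefficient of $P^{1+\Phi(r)}$ is strictly negative, so differentiating twice in $P$ gives
\[
\frac{d^2}{dP^2}\,\V(V;\epsilon P,P)=-\Bigl(\frac{\kappa\rho}{r}+\eta\epsilon\Bigr)\frac{\epsilon^{\Phi(r)}}{V^{\Phi(r)}}\,\Phi(r)\bigl(1+\Phi(r)\bigr)P^{\Phi(r)-1}<0,\qquad P>0,
\]
because $\Phi(r)>0$ (as $r>0$); strict concavity on $[0,V/\epsilon]$ follows by continuity at the endpoints. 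The interval $[0,V/\epsilon]=\{P\ge 0:V_B^*(P)\le V\}$ is exactly the range of $P$ over which \eqref{Mkt:Val} is valid, since for larger $P$ the firm starts inside the bankruptcy region and $\V(V;V_B^*(P),P)=(1-\eta)V$. The computations are elementary and I anticipate no genuine obstacle; the only point demanding attention is the sign bookkeeping for the leading coefficient together with the observation that $1+\Phi(r)$ strictly exceeds $1$ — this is what makes the concavity strict rather than merely weak, and it is precisely where $\Phi(r)>0$ is used.
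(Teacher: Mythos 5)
Your proposal is correct and follows essentially the same route as the paper: the first stage reduces to the affine form of $f$ when $V_T=0$, and the second stage substitutes $V_B^*(P)=\epsilon P$ into the explicit firm value and checks concavity (the paper stops at the first derivative and observes it is strictly decreasing, which is the same computation as your explicit second derivative).
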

\begin{proof}
(a) Suppose $K \leq 0$. Recalling the assumption that $V_T=0$, we know from Section \ref{SubSec:Opt:Cont} that for this case $V_B^*(P)=0$ for all $P > 0$, showing (i). Moreover, substituting $V_B^*(P)=0$ into the value of the firm given by \eqref{Market_Value_Firm}, yields that $\V(V;0,P) = V + \frac{P\kappa\rho}{r}$.
 This shows (ii).

(b)  Now, suppose $K > 0$. We have that $V_B^*(P)$ is strictly positive and by Remark \ref{Remark_f}, it satisfies the condition $V_B^*(P)=\frac{\partial}{\partial V}\Eq(V_B^*(P)+;V_B^*(P), P) =0$. Hence, from $f(V_B^*(P)) = 0$
we get
\begin{align}
V_B^*(P) = \epsilon P, \label{V_B_epsilon}
\end{align}
proving (i).

For (ii), by \eqref{V_B_epsilon}, $\{ P \geq 0: V_B^{*}(P) \leq V \} = [0, \frac V {\epsilon}]$. Differentiating \eqref{Mkt:Val} with respect to $P$ yields
\begin{align*}
\frac{\partial}{\partial P}\V(V;V_B^*(P),P) &= \frac{\kappa\rho}{r} \left( 1- \exp\left( -\Phi(r)\log\frac{V}{\epsilon P} \right) \right) - \frac{\kappa\rho \Phi(r)}{r} \exp\left( -\Phi(r)\log\frac{V}{\epsilon P} \right)\\
&\quad - \eta \epsilon (1+\Phi(r)) \exp\left( -\Phi(r)\log\frac{V}{\epsilon P} \right).
\end{align*}
This shows that $\frac{\partial}{\partial P}\V(V; V_B^*(P),P)$ is a strictly decreasing function of $P$, and hence $\V(V;V_B^*(P),P)$ is strictly concave for $P \in [0,\frac{V}{\epsilon}]$.
\end{proof}
\noindent 

\subsection{Periodic-observation case} 
The formulation of the solution to the two-stage problem for the periodic case is similar to Palmowski et al. \cite{palmowski2019leland}. However in our case we do not need to make any additional assumption on the jump measure to guarantee the existence of the solution.  Similar to Theorem  \ref{theorem_two_stage_continuous}, for the periodic-observation case, there exists an optimal value of $P^{*,\lambda}$ such that \eqref{V_maximum} is attained.

\begin{theo}[Periodic Observations] \label{theorem_two_stage_periodic}
 In the case of periodic observations, we have the following:
\begin{itemize}
\item[(i)] \textbf{First-stage optimization}.  (a) If $K_{\lambda}\leq 0$, where $K_\lambda$ is defined in \eqref{LT:Per:K}, we have $V_B^{*,\lambda}(P)=0$ for all $P > 0$.  (b) Otherwise, with $\epsilon_{\lambda} := \frac{K_{\lambda}}{
C_\lambda
}$ 
, then $V_B^{*,\lambda}(P) = \epsilon_{\lambda} P$ for all $P >0$.
\item[(ii)] \textbf{Second-stage optimization}.  (a) If $K_{\lambda}\leq 0$, then $\V(V;V_B^{*,\lambda}(P),P) = V + \frac{P\kappa\rho}{r}$, which is linear (and hence convex) in $P$.  (b) Otherwise, for any $V>0$, the firm value $\V(V;V_B^{*,\lambda}(P),P)$ is a strictly concave function in $P$ on $[0, \frac V {\epsilon_\lambda}] = \{ P \geq 0: V_B^{*,\lambda}(P) \leq V \}$.
\end{itemize}
\end{theo}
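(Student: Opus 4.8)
The plan is to mirror the proof of Theorem \ref{theorem_two_stage_continuous}, now using the periodic-observation expressions of Section \ref{Sec:Opt:Per} together with the standing assumption $V_T=0$.

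\emph{First-stage optimization (i).} For (a), if $K_\lambda\le 0$ then, by the very definition of the candidate barrier in Subsection \ref{SubSec:Opt:Per}, $V_B^{*,\lambda}(P)=0$ for every $P>0$. For (b), assume $K_\lambda>0$, so that $V_B^{*,\lambda}(P)>0$ is the unique root of $\Eq(V_B;V_B)=0$. I would start from \eqref{Eq:Per:Diag} and substitute $V_T=0$: by Lemma \ref{Lem:Per:Aux1} (equivalently Remark \ref{remark_Lambda}), when $V_T=0$ the quantity $\Lambda^{(r,\lambda)}(z,z)=\frac{1}{r}\frac{\Phi(r)}{\Phi(\lambda+r)}$ is independent of $z$, so \eqref{Eq:Per:Diag} collapses to
\[
\Eq(V_B;V_B)=V_B C_\lambda+\frac{P\kappa\rho}{r}\frac{\Phi(r)}{\Phi(\lambda+r)}-\frac{P(\rho+m)}{r+m}\frac{\Phi(r+m)}{\Phi(\lambda+r+m)}=V_B C_\lambda-PK_\lambda,
\]
using \eqref{LT:Per:K}. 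Since $C_\lambda>0$ (as in Proposition \ref{Prop:Per:Aux1}), setting the right-hand side to zero gives $V_B^{*,\lambda}(P)=(K_\lambda/C_\lambda)P=\epsilon_\lambda P$.

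\emph{Second-stage optimization (ii).} For (a), substitute $V_B^{*,\lambda}(P)=0$ into \eqref{Market_Value_Firm}. Because $V_t=Ve^{X_t}>0$ for all $t$, we have $\sigma_0^{-,\lambda}=\infty$, and, with $V_T=0$, the tax term equals $\kappa\rho P\int_0^\infty e^{-rt}\,dt=\kappa\rho P/r$ while the bankruptcy-cost term vanishes, so $\V(V;0,P)=V+P\kappa\rho/r$, affine (hence convex) in $P$. For (b), assume $K_\lambda>0$ and restrict to $P\in[0,V/\epsilon_\lambda]$, i.e.\ $V_B^{*,\lambda}(P)=\epsilon_\lambda P\le V$. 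Plugging $V_T=0$ into \eqref{Val:Per} and using the closed forms $\Lambda^{(r,\lambda)}(\log V,\log V_B)=\frac{1}{r}\bigl(1-J^{(r,\lambda)}(\log(V/V_B);0)\bigr)$ and $J^{(r,\lambda)}(x;\beta)=\frac{\Phi(\lambda+r)-\Phi(r)}{\beta+\Phi(\lambda+r)}e^{-\Phi(r)x}$ from \eqref{1st:FI:Per}, one obtains
\[
\V(V;V_B,P)=V+\frac{P\kappa\rho}{r}\Bigl(1-\frac{\Phi(\lambda+r)-\Phi(r)}{\Phi(\lambda+r)}\Bigl(\frac{V_B}{V}\Bigr)^{\Phi(r)}\Bigr)-\eta V_B\frac{\Phi(\lambda+r)-\Phi(r)}{1+\Phi(\lambda+r)}\Bigl(\frac{V_B}{V}\Bigr)^{\Phi(r)}.
\]
Setting $V_B=\epsilon_\lambda P$, the first two terms are affine in $P$ and the remaining two combine into $-c\,P^{1+\Phi(r)}$ with an explicit constant $c>0$ (positive because $\epsilon_\lambda>0$ and $\Phi(\lambda+r)>\Phi(r)>0$ by convexity of $\psi$). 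Since $1+\Phi(r)>1$, the map $P\mapsto P^{1+\Phi(r)}$ is strictly convex on $[0,\infty)$, so $P\mapsto\V(V;V_B^{*,\lambda}(P),P)$ is strictly concave on $[0,V/\epsilon_\lambda]$; alternatively one checks $\partial_P^2\V<0$ directly, as in the continuous case.

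\emph{Main obstacle.} There is no real obstacle here: once $V_T=0$ is imposed, the argument is essentially bookkeeping. The single point needing care is establishing that $\Lambda^{(r,\lambda)}(z,z)$ is constant in $z$ when $V_T=0$ (via Lemma \ref{Lem:Per:Aux1}/Remark \ref{remark_Lambda}); this is precisely what makes $\Eq(V_B;V_B)=0$ linear in $V_B$ and yields the clean proportionality $V_B^{*,\lambda}(P)=\epsilon_\lambda P$, after which the concavity in (ii)(b) is automatic, since the only nonlinearity in $P$ enters through the single power $(\epsilon_\lambda P/V)^{\Phi(r)}$ multiplied by affine prefactors.
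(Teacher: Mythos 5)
Your proposal is correct and follows essentially the same route as the paper: the continuous-fit condition $\Eq(V_B;V_B)=0$ with $V_T=0$ collapses to $V_B C_\lambda - PK_\lambda=0$, giving $V_B^{*,\lambda}(P)=\epsilon_\lambda P$, and the firm value then reduces to an affine function of $P$ minus a positive multiple of $P^{1+\Phi(r)}$. The only cosmetic difference is that you read off strict concavity from this power-law form directly, while the paper differentiates in $P$ and invokes the strict monotonicity of $y\mapsto J^{(r,\lambda)}(y;\beta)$; the two computations are interchangeable.
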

\begin{proof} 
(a) The fact that $V_B^{*,\lambda}(P)= 0$ for the case  $K_{\lambda}\leq 0$, follows by construction as in Section \ref{SubSec:Opt:Per}. Moreover, by substituting $V_B^*(P)=0$ into expression \eqref{Val:Per} we get
$\V(V;0,P) = V+ \frac{P \kappa\rho}{r}$.

 (b) Now we consider the case  $K_{\lambda}> 0$.  Recall that $V_B^{*,\lambda}(P)$ satisfies the \textit{continuous fit} condition \[
 \Eq(V_B^{*,\lambda}(P);V_B^{*,\lambda}(P),P) = 0.
 \] Hence, by setting \eqref{Eq:Per:Diag} equal to 0 and using \eqref{Res:Dens:Tax}, we obtain
\begin{equation*}
0 = V_B^{*,\lambda}(P)
C_\lambda
  + \frac{P\kappa \rho}{r} \left( 1 - J^{(r,\lambda)}\left( 0 ;0 \right) \right) - \frac{P(\rho + m)}{r +m }\frac{\Phi(r+m)}{\Phi(\lambda + r+m)},
\end{equation*}
which can be re-arranged as
\begin{align}
V_B^{*,\lambda}(P) = \epsilon_\lambda P, \label{V_B_epsilon_lambda}
\end{align}
proving (i). By \eqref{V_B_epsilon_lambda}, $\{ P \geq 0: V_B^{*,\lambda}(P) \leq V \} = [0, \frac V {\epsilon_\lambda}]$. 

For (ii),
we obtain from expression \eqref{Val:Per} together with \eqref{Res:Dens:Tax} and \eqref{V_B_epsilon_lambda}, that the firm value is given by 
\[
\V(V; V_B^{*,\lambda}(P),P) = V + P \frac{\kappa\rho}{r} \left( 1 - J^{(r,\lambda)}\left( \log\frac{V}{ \epsilon_\lambda P} ;0 \right) \right) - \eta  \epsilon_\lambda P J^{(r,\lambda)}\left( \log\frac{V}{ \epsilon_\lambda P} ;1 \right).
\]
 By differentiating with respect to $P$ we get 
\begin{align*}
\frac{\partial}{\partial P} \V(V;V_B^{*,\lambda}(P),P) &= \frac{\kappa\rho}{r} \left( 1 - J^{(r,\lambda)}\left( \log\frac{V}{ \epsilon_\lambda P} ;0 \right) \right) + \frac{\kappa \rho}{r} J^{(r,\lambda)\prime} \left( \log\frac{V}{ \epsilon_\lambda P} ;0 \right) \\
&\quad - \left( \eta \epsilon_{\lambda} J^{(r,\lambda)}\left( \log\frac{V}{ \epsilon_\lambda P} ;1 \right)- \eta \epsilon_{\lambda}  J^{(r,\lambda)\prime} \left( \log\frac{V}{ \epsilon_\lambda P} ;1 \right) \right).
\end{align*}
Note that 
\[
J^{(r,\lambda)\prime}(y;\beta) = -\Phi(r) J^{(r)}(y;\beta),\quad \text{$\beta\geq 0$, $y > 0$.}
\]
Hence, with this fact and by grouping terms, we can write
\begin{align*}
\frac{\partial}{\partial P} \V(V;V_B^{*,\lambda}(P),P) &= \frac{\kappa\rho}{r} \left( 1 - J^{(r,\lambda)}\left( \log\frac{V}{ \epsilon_\lambda P} ;0 \right) \right)  - \frac{\kappa\rho \Phi(r)}{r} J^{(r,\lambda)}\left( \log\frac{V}{ \epsilon_\lambda P};0 \right)\\
& \quad - \eta \epsilon_{\lambda}  (1 + \Phi(r)) J^{(r,\lambda)}\left( \log\frac{V}{ \epsilon_\lambda P} ;1 \right).
\end{align*}
Since the mapping $y \mapsto J^{(r,\lambda)}(y;\beta)$ is strictly decreasing for all $\beta \geq 0$, we conclude that $ \frac{\partial}{\partial P} \V(V;V_B^{*,\lambda}(P),P) $ is strictly decreasing in $P$. Hence the value of the firm is a strictly concave function of $P$ on the interval $[0,\frac{V}{\epsilon_\lambda}]$, which proves our claim. 
\end{proof}
\section{Numerical Examples}\label{Sec:Num}


We conclude this paper through a sequence of numerical results, focusing on the case where 
$X$ is given by a mixture of Brownian motion and a compound Poisson process with i.i.d.\ phase-type distributed jumps:
\begin{align}
X_t= c t +\sigma B_t +\sum_{i=1}^{N_t}U_i, \quad t \geq 0, \label{X_phase}
\end{align}
where $(B_t)_{t \geq 0}$ is a standard Brownian motion, $(N_t)_{t \geq 0}$ is a Poisson process with intensity $\gamma$ and $(U_i)_{i\geq 1}$ takes a phase-type random variable that approximates a (folded) normal random variable with mean zero and variance $1$ (whose parameters are given in \cite{EGAMI2014}).

We use the same parameter sets as those used in  \cite{palmowski2019leland} (who use the same parameters as those in \cite{ChenKou09, HilbRog02, Kyprianou2007, Leland94, LelandToft96}): we set  $r=7.5\%$, $\delta=7\%$, $\kappa = 35\%$, $\eta =50\%$, $\rho = 8.162\%$, $m=0.2$, and $V_T = P\rho/\delta$.
Because $V_T > 0$, we must have $V_B^* > 0$ and  $V_B^{*,\lambda} > 0$.

The scale function for the \lev process of the form \eqref{X_phase} admits an explicit expression written as a sum of  exponential functions (usually with complex-valued coefficients); see e.g.\ \cite{EGAMI2014, Kuznetsov2013}.
In particular, we consider the following parameters: $\sigma = 0.2$, $c =  -0.24767$, $\gamma = 0.5$ ($c$ is chosen so that the martingale property $-\psi(1)=r-\delta = 0.005$ is satisfied). 
 Unless stated otherwise,  we set $P=50$ and, for the periodic case, $\lambda = 4$ (on average four times per year).




 \subsection{Optimality}\label{subsec:optimal} We first confirm the optimality of the suggested barrier $V_B^*$ and $V_B^{*, \lambda}$  for both continuous- and periodic-observation cases, which are given by
 \eqref{Cont:Equiv:FP} and  \eqref{V_B_equality}, respectively. Because these functions are monotone, we can apply classical bisection methods.
%
 The corresponding firm/debt/equity values can be computed  by \eqref{Mkt:Val}, \eqref{Debt:Val}, and \eqref{Eq:Val} for the continuous-observation case and \eqref{Val:Per}, \eqref{Debt:Per}, and \eqref{Eq:Per:Obs} for the periodic-observation case.

At the top of Figure \ref{figure_value}, for both continuous and periodic cases, we plot  $V \mapsto \mathcal{E}(V; V_B^*)$ and $V \mapsto \mathcal{E}(V; V_B^{*, \lambda})$ along with $V \mapsto \mathcal{E}(V; V_B)$ for $V_B \neq V_B^*$ and $V_B \neq V_B^{*,\lambda}$, respectively. The optimality as in Theorems \ref{theorem_main_classical} and \ref{theorem_main_periodic} can be confirmed. For the continuous-observation case, the level $V_B^*$ satisfies the limited liability constraint \eqref{limited_liability}, and any level $V_B$ lower than $V_B^*$ violates \eqref{limited_liability}. The same can be observed for the periodic case. We also confirm the smooth fit  for the continuous-observation case and continuous fit for the periodic-observation case.

\begin{figure}[htbp]
\begin{center}
\begin{minipage}{1.0\textwidth}
\centering
\begin{tabular}{cc}
\includegraphics[scale=0.5]{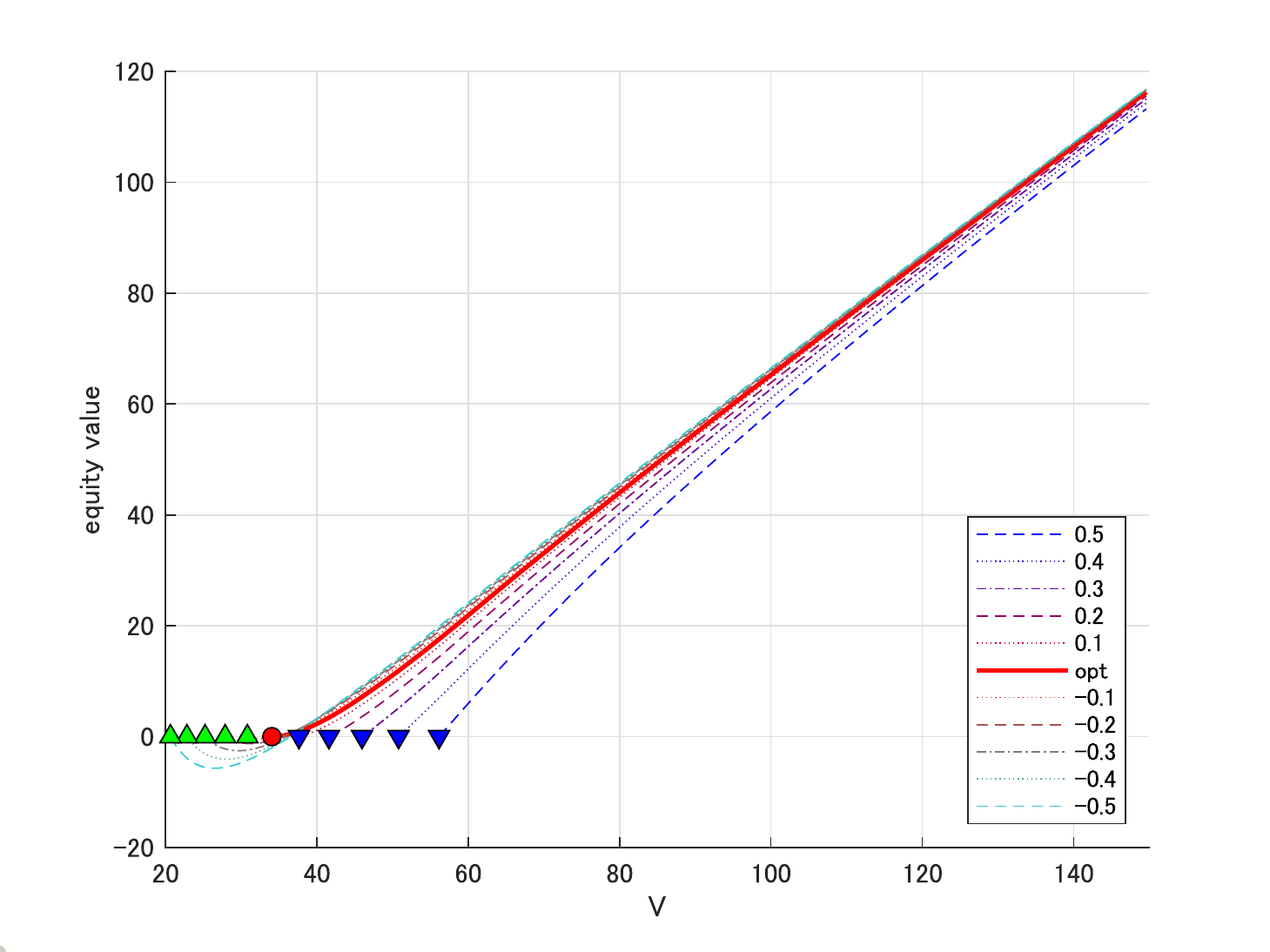} & \includegraphics[scale=0.5]{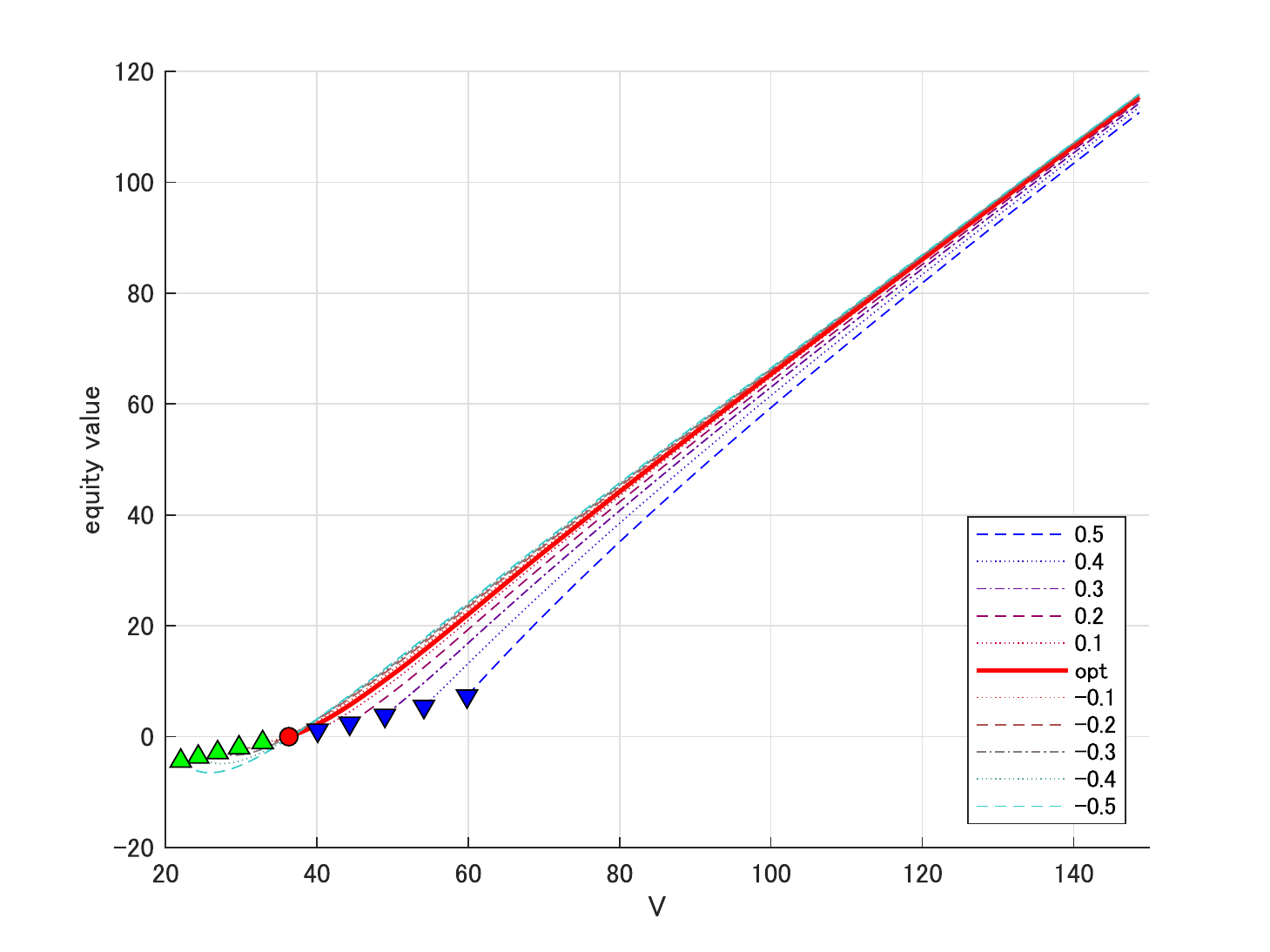} \\
\textbf{Continuous}: equity value $V \mapsto \mathcal{E}(V; V_B)$ & \textbf{Periodic}: equity value  $V \mapsto \mathcal{E}(V; V_B)$ \\
\includegraphics[scale=0.5]{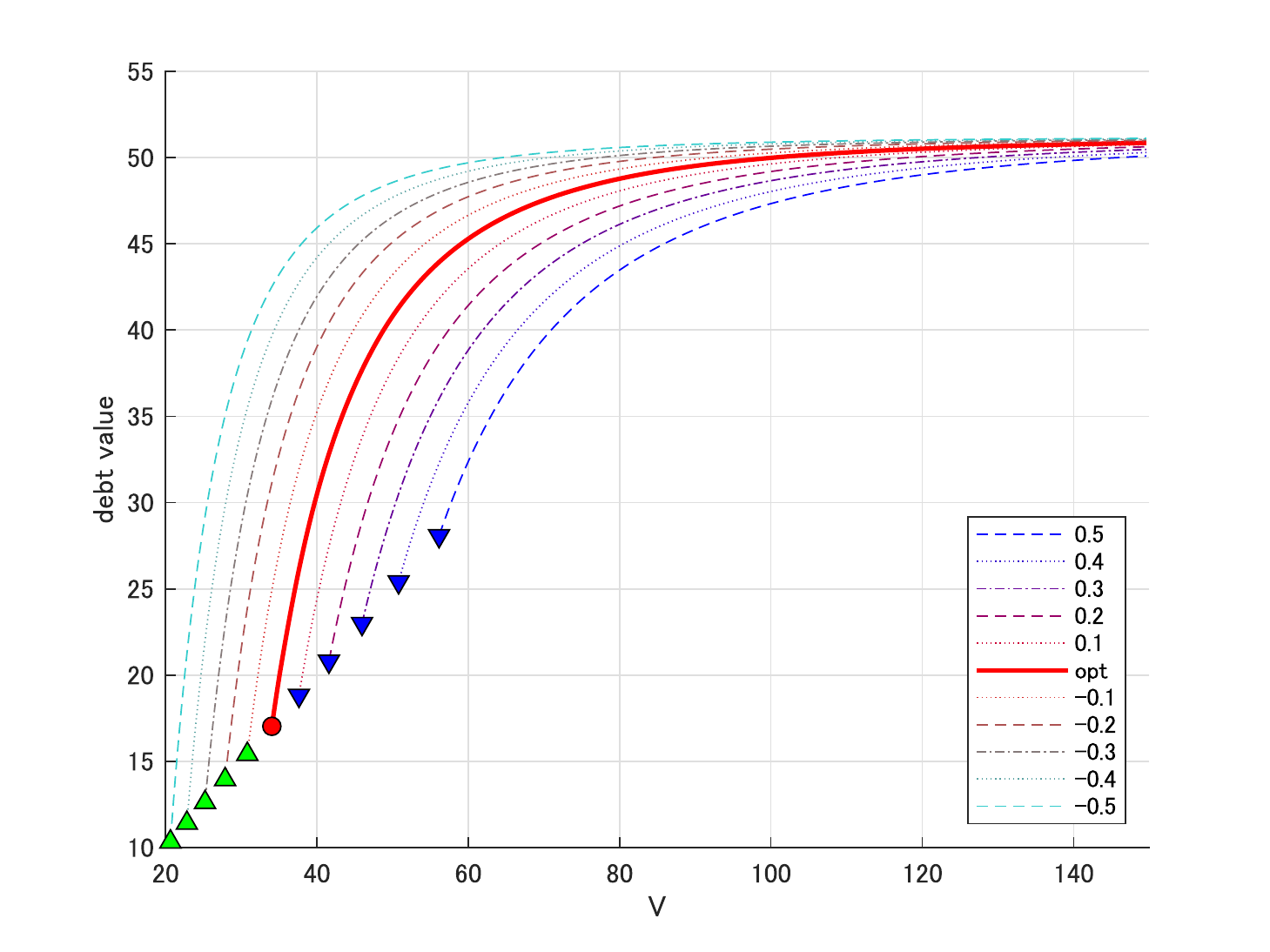} & \includegraphics[scale=0.5]{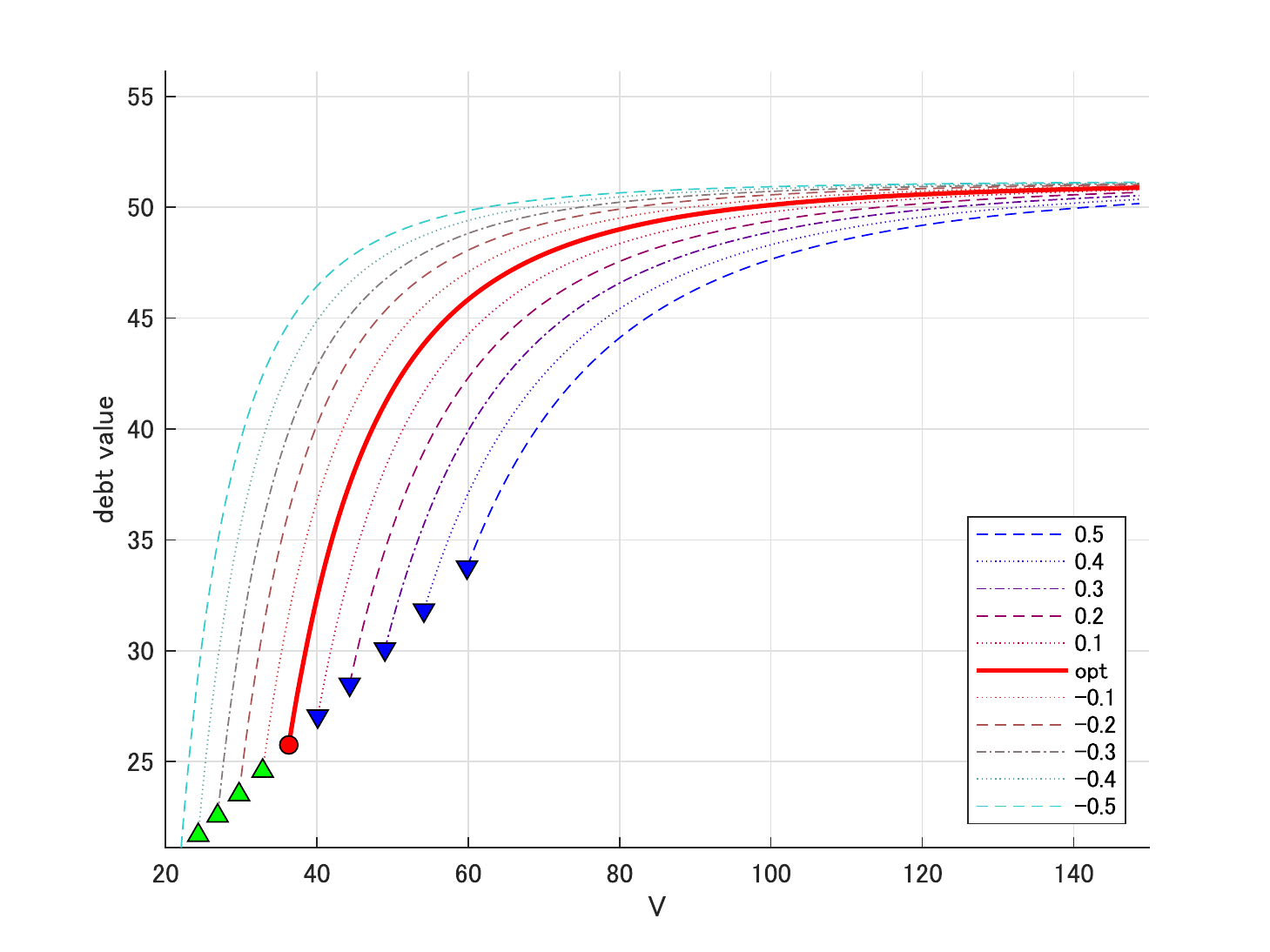} \\
\textbf{Continuous}: debt value  $V \mapsto \mathcal{D}(V; V_B)$ & \textbf{Periodic}: debt value  $V \mapsto \mathcal{D}(V; V_B)$ \\
\includegraphics[scale=0.5]{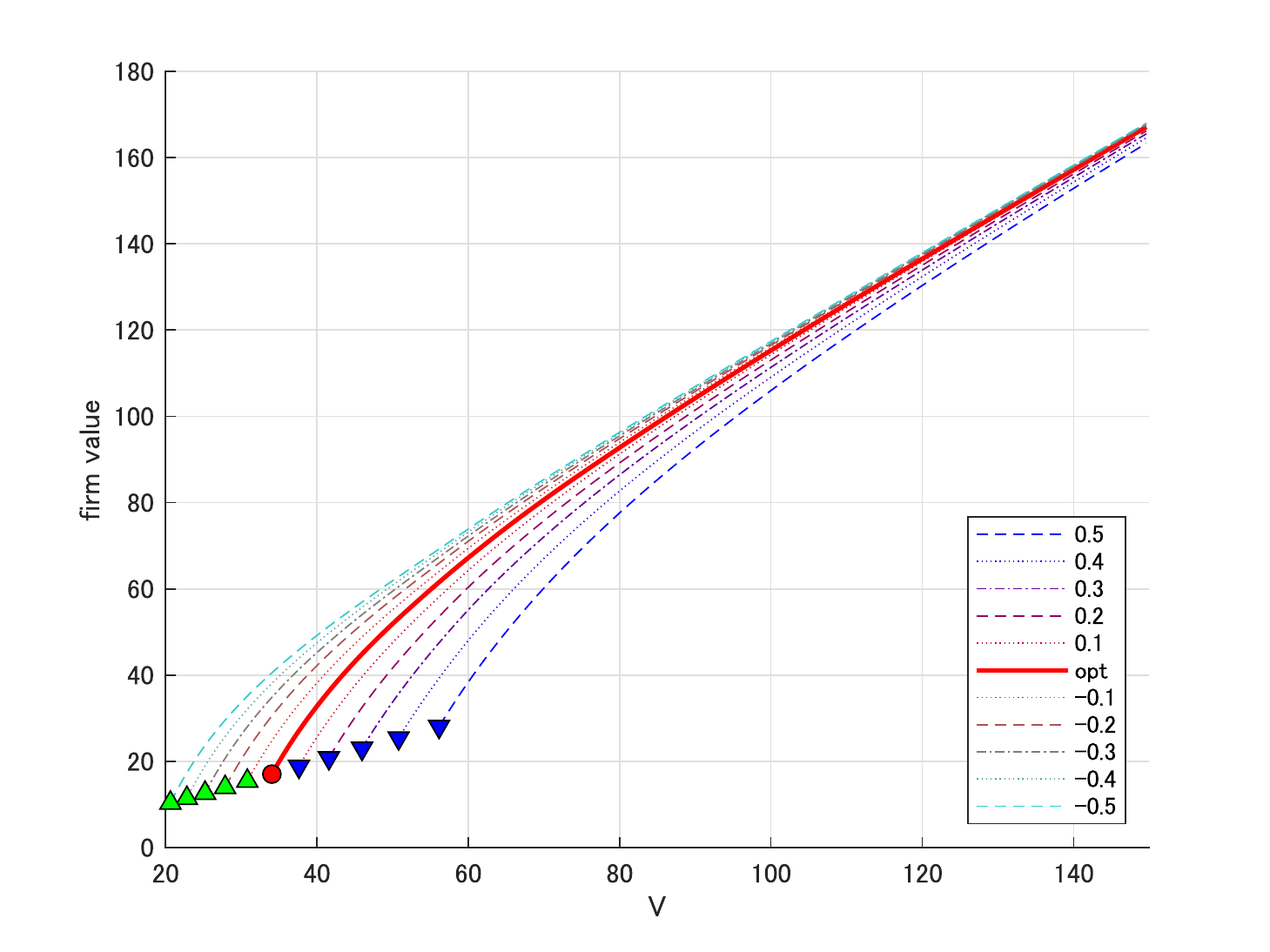} & \includegraphics[scale=0.5]{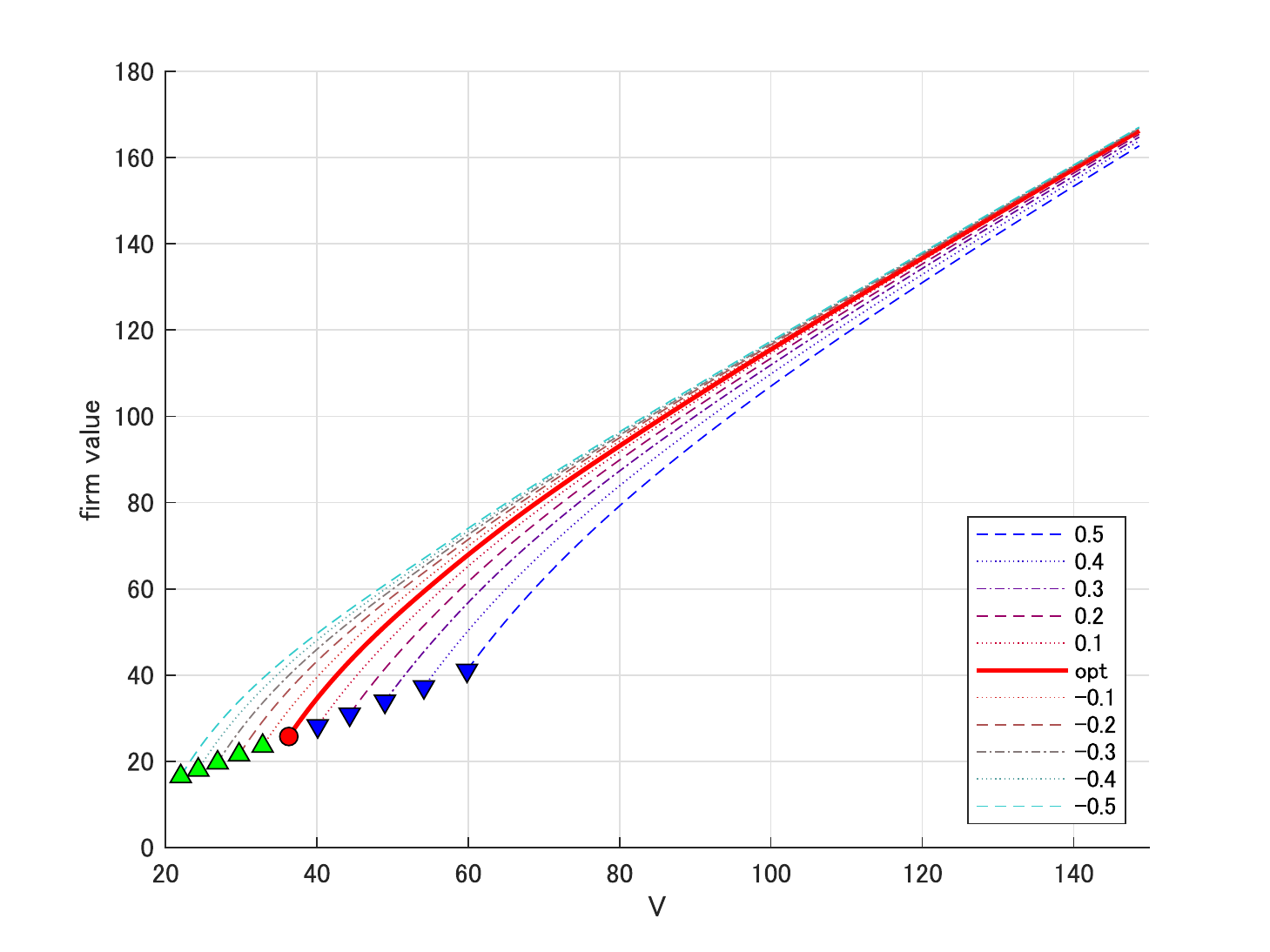} \\
\textbf{Continuous}: firm  value  $V \mapsto \mathcal{V}(V; V_B)$ & \textbf{Periodic}: firm value  $V \mapsto \mathcal{V}(V; V_B)$
\end{tabular}
\end{minipage}
\caption{(Left) The equity/debt/firm values as functions of $V$  on $(V_B, \infty)$ for $V_B=V_B^*$ (solid) along with $V_B = V_B^*  \exp(\epsilon)$ (dotted) for $\epsilon = -0.5,-0.4, \ldots, -0.1, 0.1, 0.2, \ldots, 0.5$. The values at $V = V_B$ are indicated by circles for $V_B = V_B^*$ and  those for  $V_B < V_B^*$ (resp.\ $V_B > V_B^*$) are indicated by up- (resp.\ down)-pointing triangles. (Right) Same results for the periodic case.} \label{figure_value}
\end{center}
\end{figure}


\subsection{Sensitivity with respect to $\lambda$ of the equity value} 
We next study the impacts of the rate of observation $\lambda$ on the optimal strategies. In Figure \ref{fig_r}, the equity value $\mathcal{E}(\cdot; V_B^{*,\lambda})$ is shown for various values of the observation frequency $\lambda$ along with those in the continuous-observation case. As $\lambda$ increases, the optimal barrier decreases and converges to that in the continuous observation case. The convergence of the equity value is also confirmed. These results confirm the discussions given in Remark \ref{remark_convergence}.


\begin{figure}[htbp]
\begin{center}
\begin{minipage}{1.0\textwidth}
\centering
\begin{tabular}{c}
\includegraphics[scale=0.5]{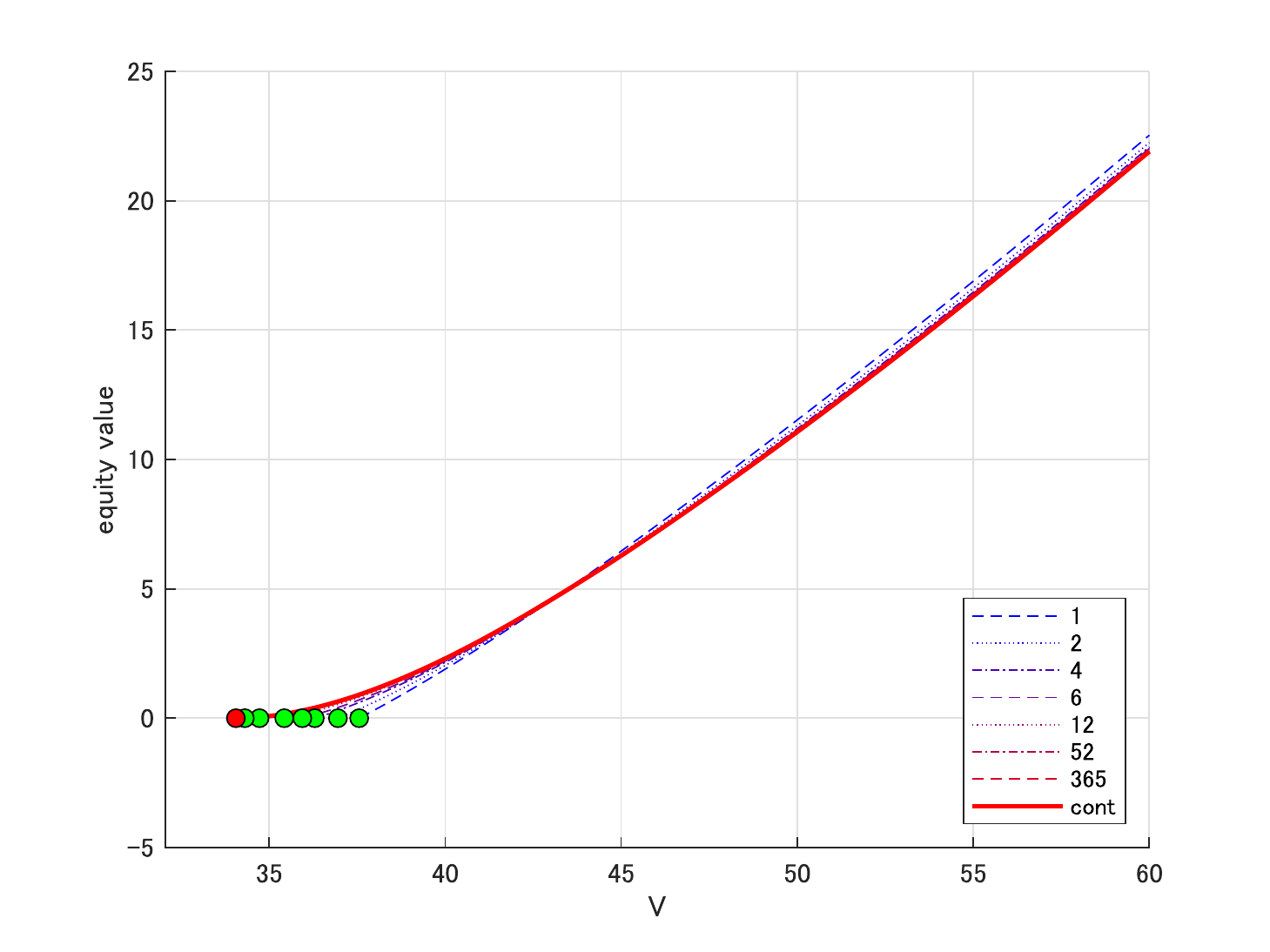} \\
\end{tabular}
\end{minipage}
\caption{\small{The equity values in the periodic-observation case $\mathcal{E}(V; V_B^{*,\lambda})$ (dotted) for $\lambda = 1,2,4,6,12,52,365$ along with the continuous-observation case   $\mathcal{E}(V; V_B^{*})$  (solid).  The corresponding values at $V = V_B^{*,\lambda}$ and $V = V_B^*$  are indicated by circles. 
}} \label{fig_r}
\end{center}
\end{figure}

\subsection{Two-stage problem}

\begin{figure}[htbp]
\begin{center}
\begin{minipage}{1.0\textwidth}
\centering
\begin{tabular}{cc}
\includegraphics[scale=0.5]{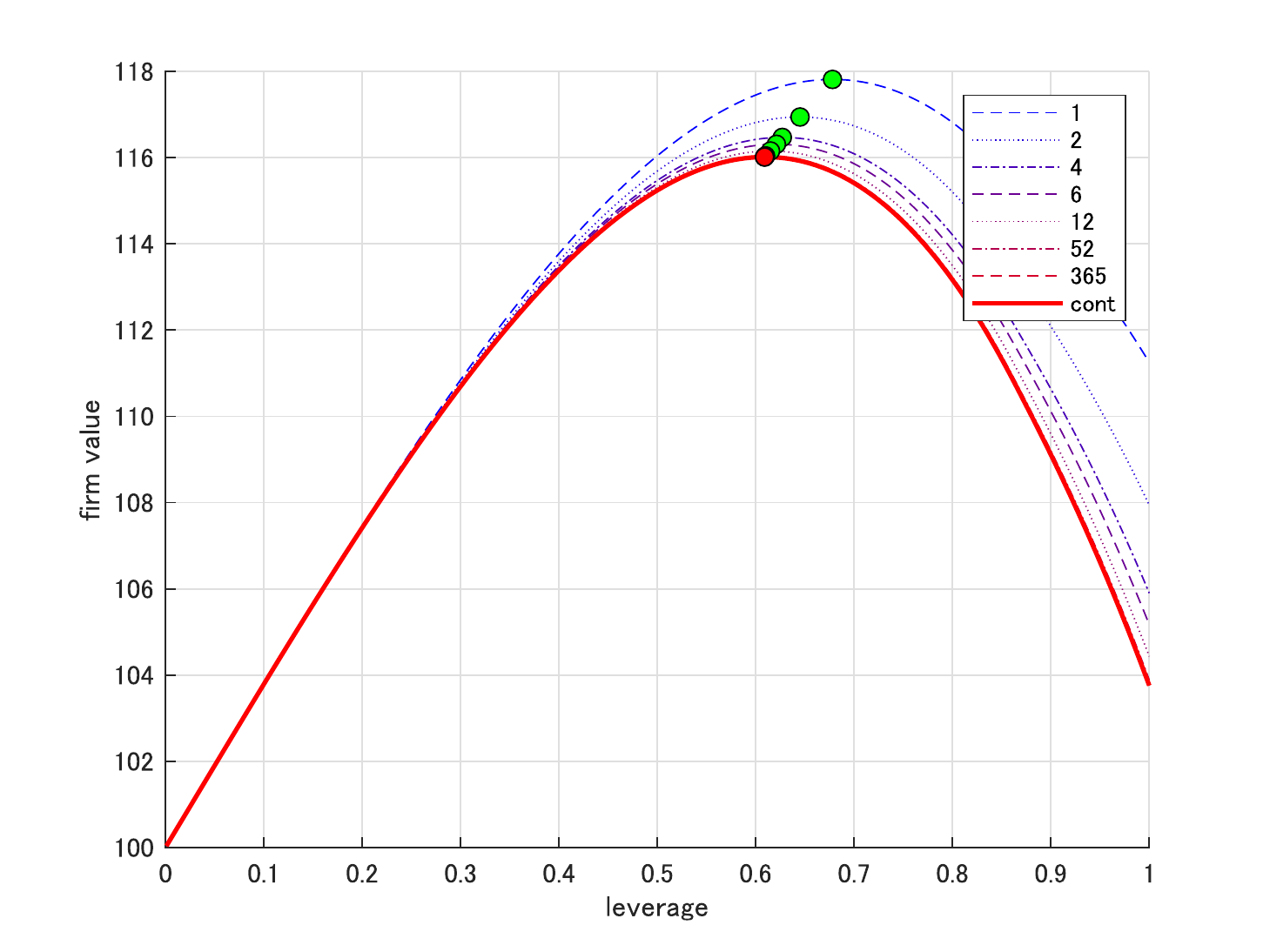}  & \includegraphics[scale=0.5]{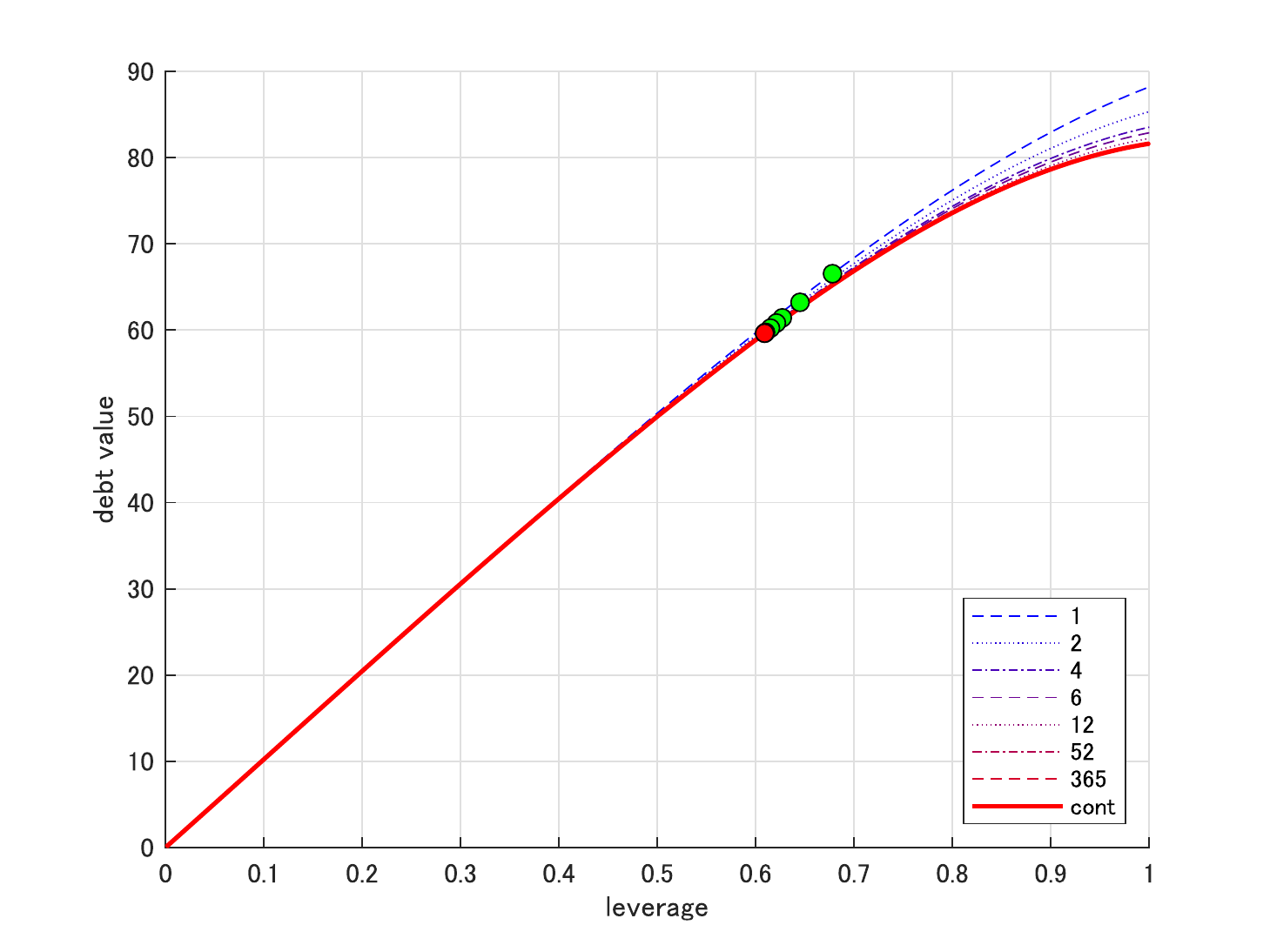}  \\
 firm value  &debt value  \\
\end{tabular}
\end{minipage}
\caption{\small{The firm (left) and debt (right) values with respect to  the leverage $P/V$ for the two-stage problem when $V = 100$. Those in the continuous-observation case are indicated by solid lines and periodic cases with $\lambda = 1,2,4,6,12,52,365$ are indicated by dotted lines. The points at the optimal leverage $P^*/V$ are shown by the circles.}} \label{figure_two_stage}
\end{center}
\end{figure}

We now move onto the two-stage problem as studied in Section \ref{Sec:TwoStage}. In Theorems \ref{theorem_two_stage_continuous} and \ref{theorem_two_stage_periodic}, the firm values $\mathcal{V}(V;V_B^*(P), P)$ and $\mathcal{V}(V;V_B^{*,\lambda}(P), P)$ have been confirmed to be concave in $P$ for the case $V_T = 0$.  To study if the same result holds when $V_T > 0$, 
 we keep using $V_T = P\rho/\delta$ as a function of $P$.

For $V = 100$,  we compute $V_B^*$ and  $V_B^{*,\lambda}$ for $P$ running from $0$ to $V=100$ (i.e.\ leverage $P/V$ running from $0$ to $1$) and then the corresponding firm and debt values for the continuous and periodic cases for various values of $\lambda$.  The firm and debt values are given in  Figure \ref{figure_two_stage}.
At least in the considered cases, the concavity with respect to $P$ is confirmed. In addition, monotonicity with respect to $\lambda$ of firm and debt values as well as the optimal barriers are also observed.

 \subsection{Sensitivity with respect to the jump rate}
 
 We now move onto analyzing the impact of the positive jumps of the process $X$ on the optimal bankruptcy levels as well as the optimal capital structure. To this end, we repeat the experiments conducted above for various values of  the jump rate $\gamma$ of positive jumps. In order to see easily the sensitivity with respect to $\gamma$, here we decide not to stick to the risk-neutral condition and simply change the value of $\gamma$ (without changing the drift).

In Figure \ref{figure_barrier_against_jump}, we plot the  equity values for various selections of $\gamma$ and the bankruptcy levels $V_B^*$ and $V_B^{*,\lambda}$ as functions of $\gamma$ for both continuous- and periodic-observation cases, respectively. Interestingly,  the equity value and the optimal bankruptcy level fail to be monotone. While this behavior with respect to $\gamma$ is not entirely intuitive, our interpretation is given as follows.

When $\gamma$ is sufficiently high, even when the current asset is low, one can expect sudden positive jumps to reach a healthy state above $V_T$ so that one can enjoy the tax shields, producing higher firm values. On the other hand, with low $\gamma$, this 
 is unlikely to be achieved -- as a result,  the firm value does not increase as rapidly as the debt value does, resulting in the decrease of the equity value. Because the optimal barrier is the root of $\Eq(V_B;V_B) = 0$, it tends to increase, in $\gamma$, when $\gamma$ is low but tends to decrease when $\gamma$ is high.
 
In Figure \ref{figure_two_stage_jumps}, we further study how $\gamma$ changes the optimal capital structures (similarly to what we studied in Figure \ref{figure_two_stage}). Contrary to what we observed in Figure \ref{figure_barrier_against_jump}, monotonicity with respect to $\gamma$ holds at least in this considered case. In particular, the firm and debt values both tend to increase in $\gamma$. In addition, the optimal leverage increases as $\gamma$ increases.
 
\begin{figure}[htbp]
\begin{center}
\begin{minipage}{1.0\textwidth}
\centering
\begin{tabular}{cc}
\includegraphics[scale=0.5]{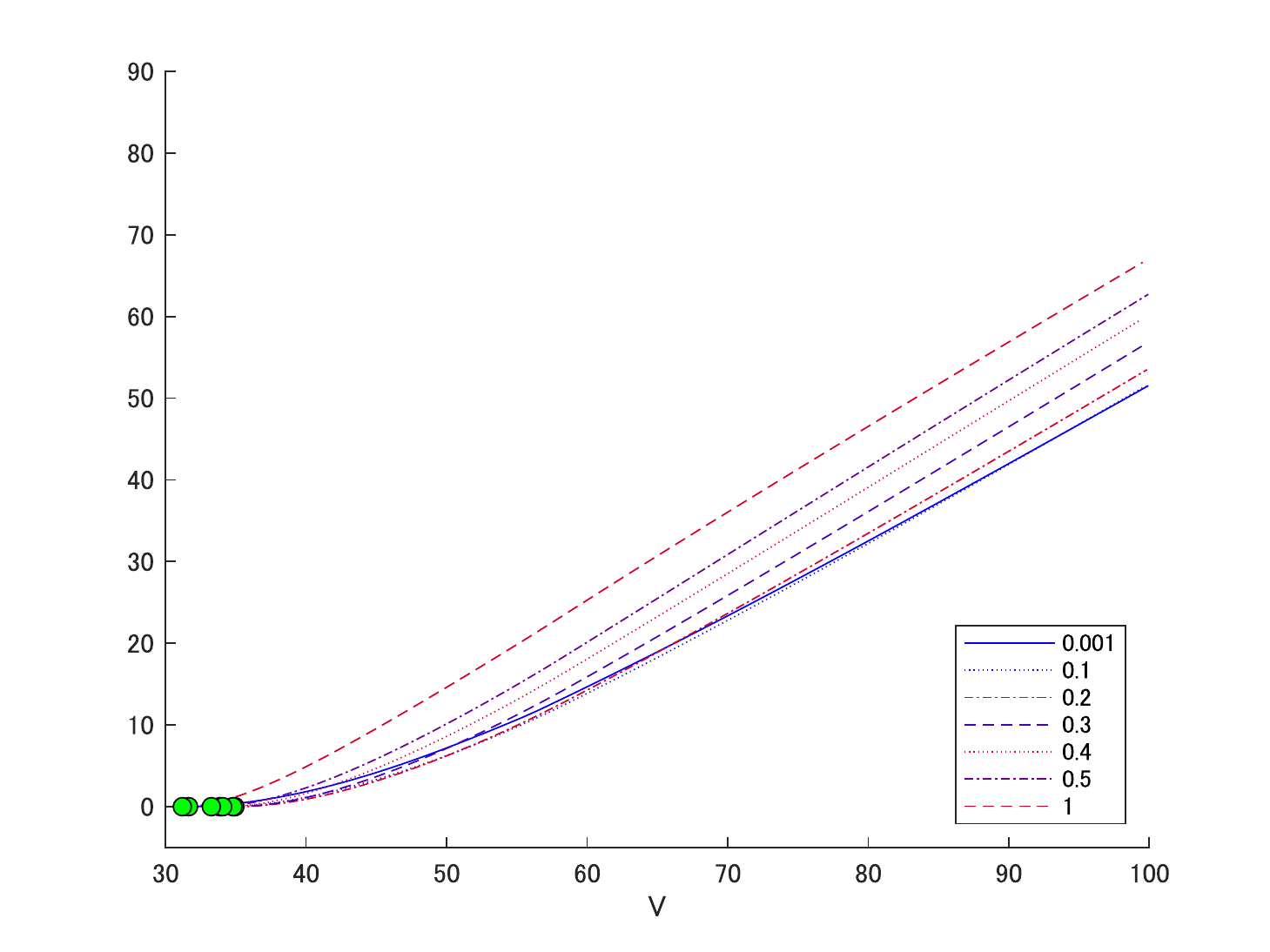}  & \includegraphics[scale=0.5]{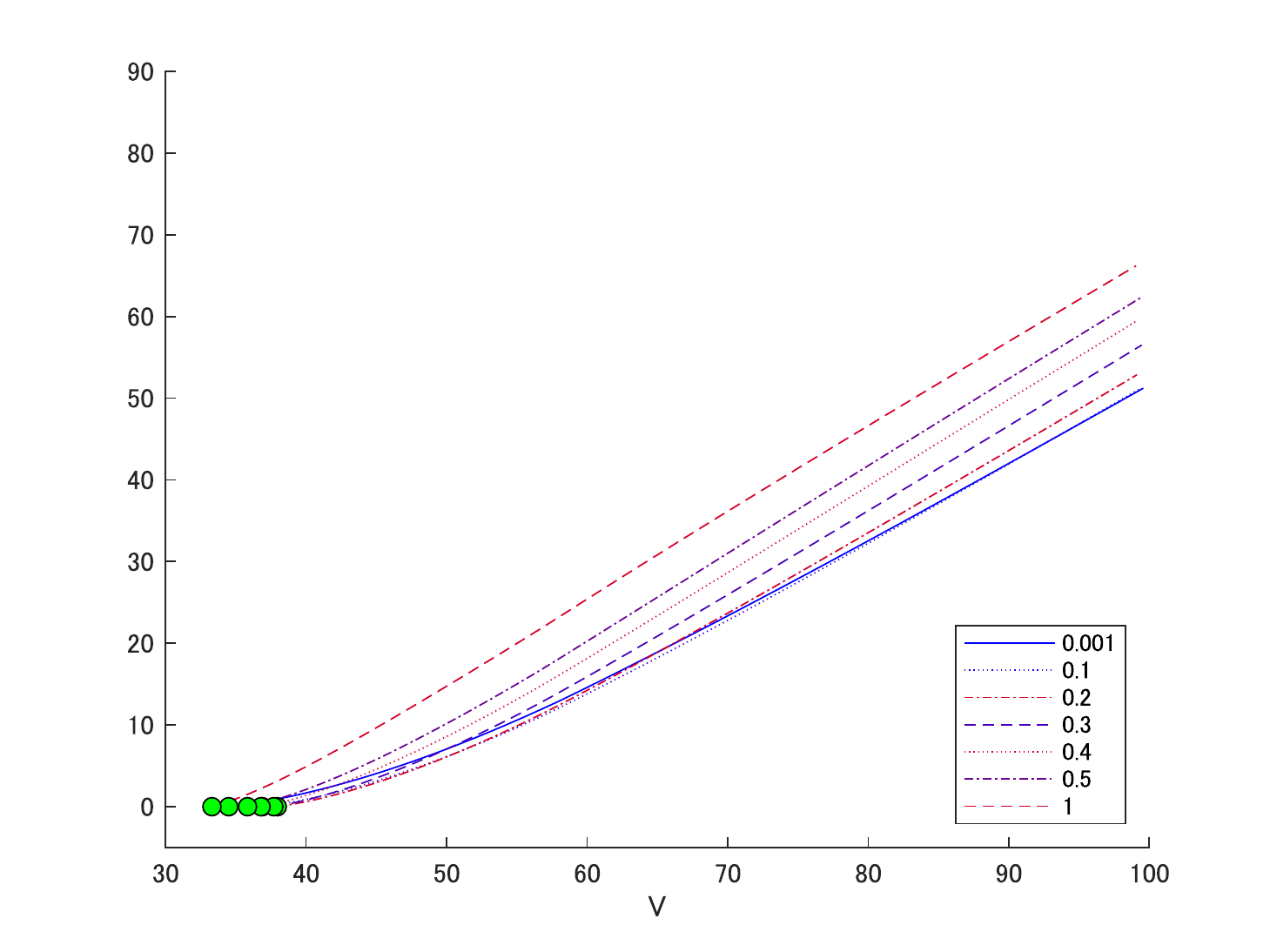}  \\ 
\includegraphics[scale=0.5]{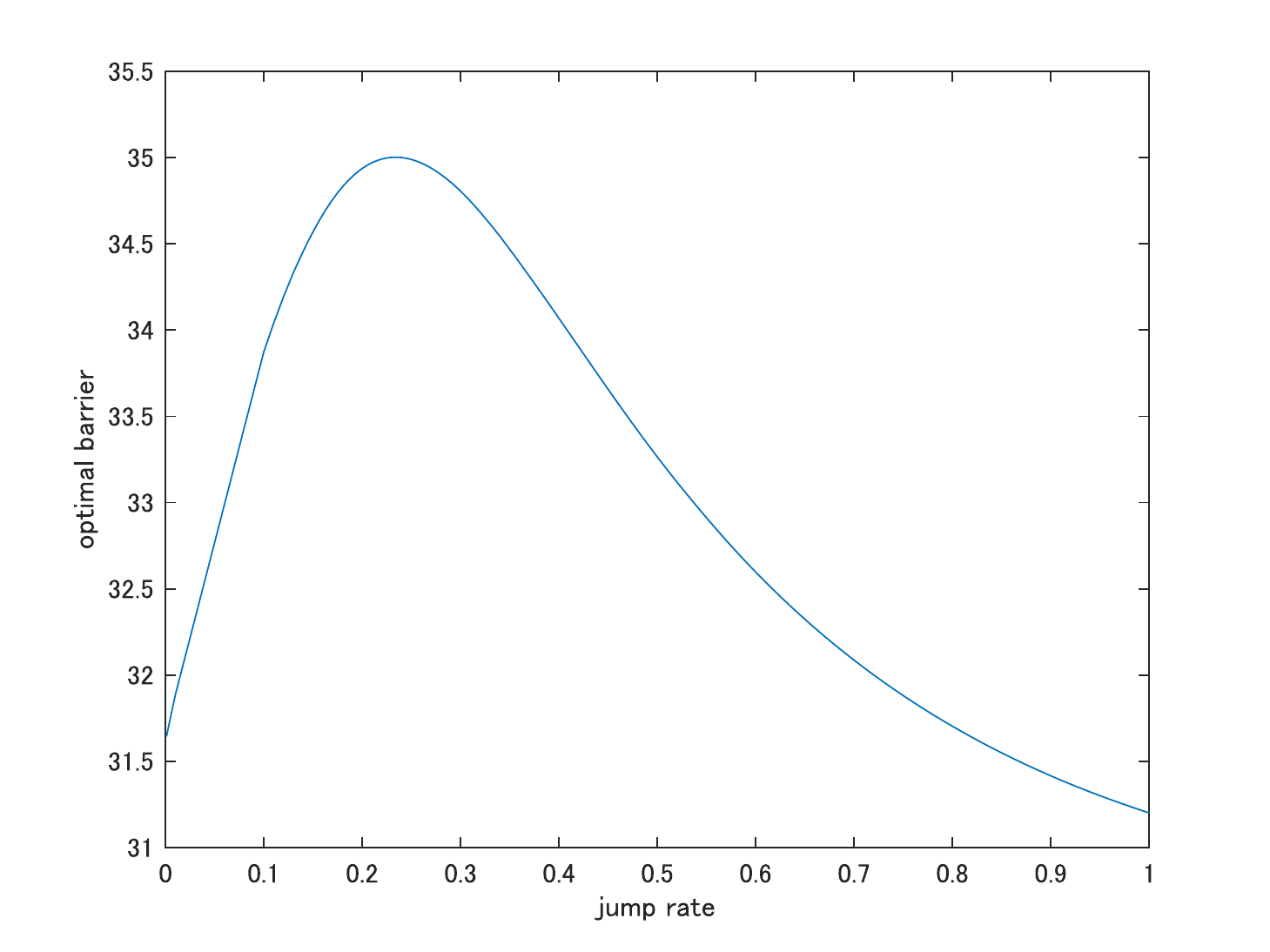}  & \includegraphics[scale=0.5]{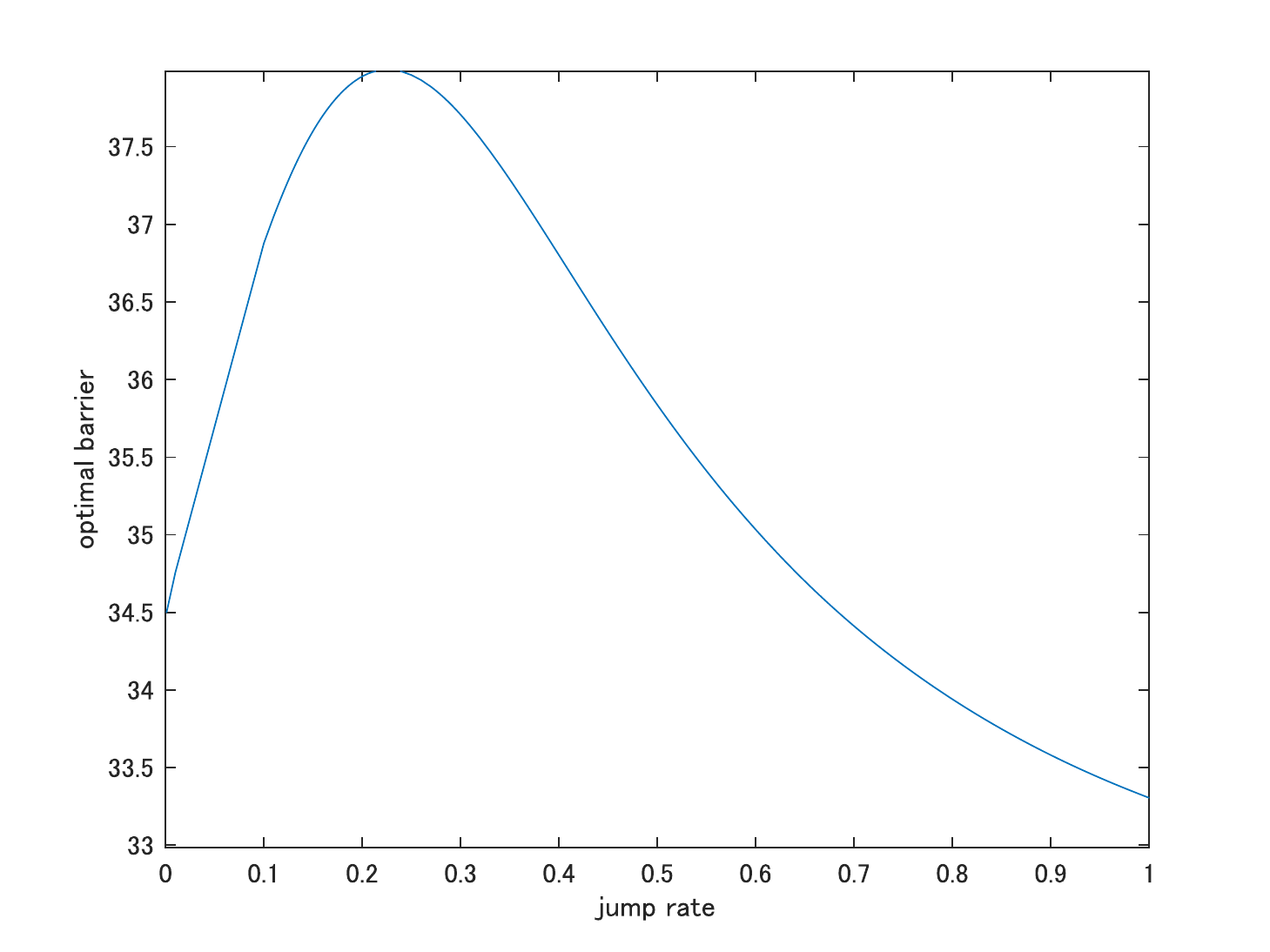}  \\
 continuous case & periodic case \\
\end{tabular}
\end{minipage}
\caption{\small{(Top) The equity values $\mathcal{E}(V; V_B^*)$ and $\mathcal{E}(V; V_B^{*,\lambda})$ for $\gamma = 0.001,0.1,0.2,0.3,0.4,0.5,1$ (left for the continuous-observation case and right for the periodic-observation case).  (Bottom) optimal barrier $V_B^*$ and $V_B^{*,\lambda}$ with respect to the jump rate $\gamma$.}} \label{figure_barrier_against_jump}
\end{center}
\end{figure}

\begin{figure}[htbp]
\begin{center}
\begin{minipage}{1.0\textwidth}
\centering
\begin{tabular}{cc}
\includegraphics[scale=0.5]{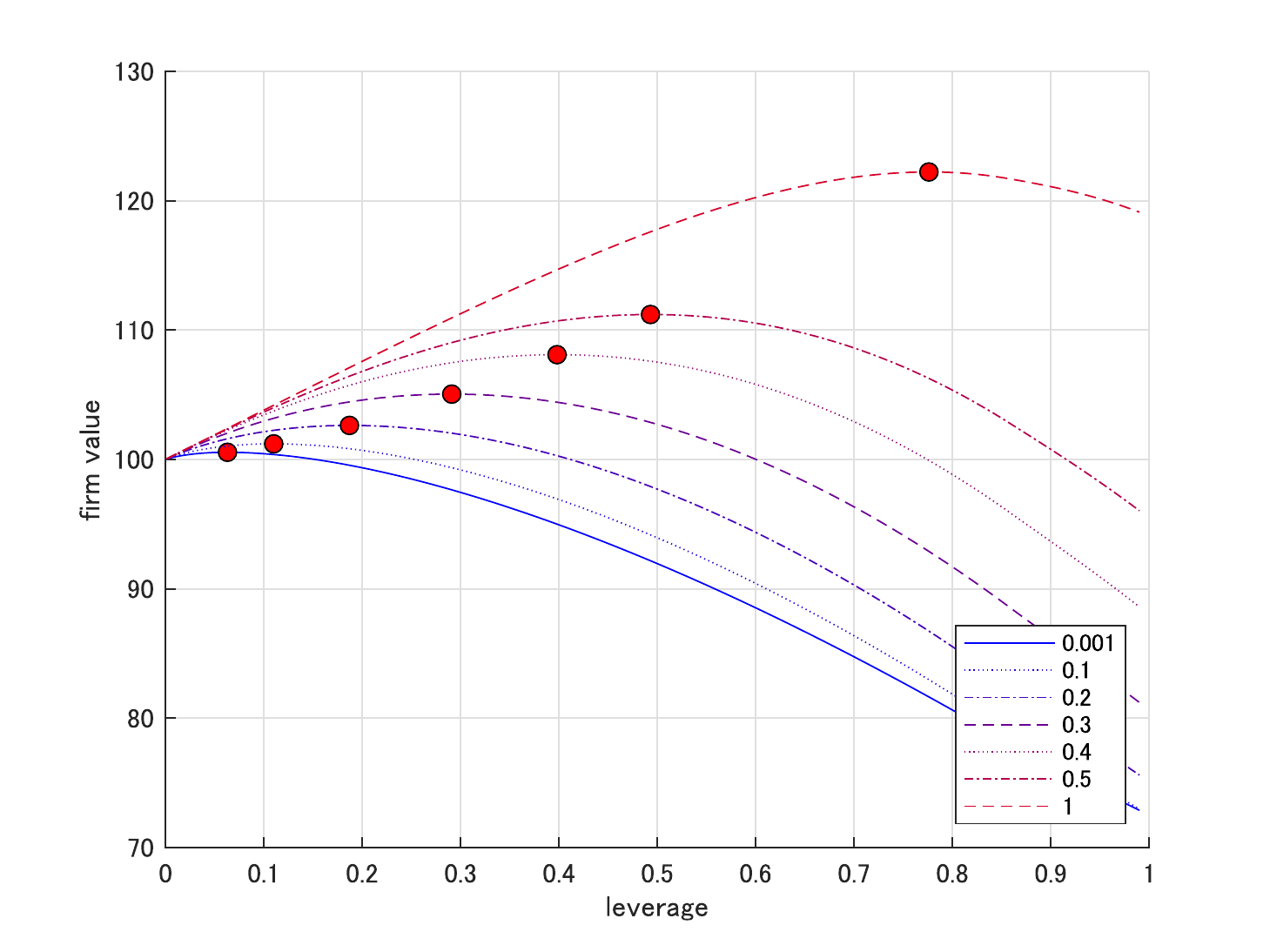}  & \includegraphics[scale=0.5]{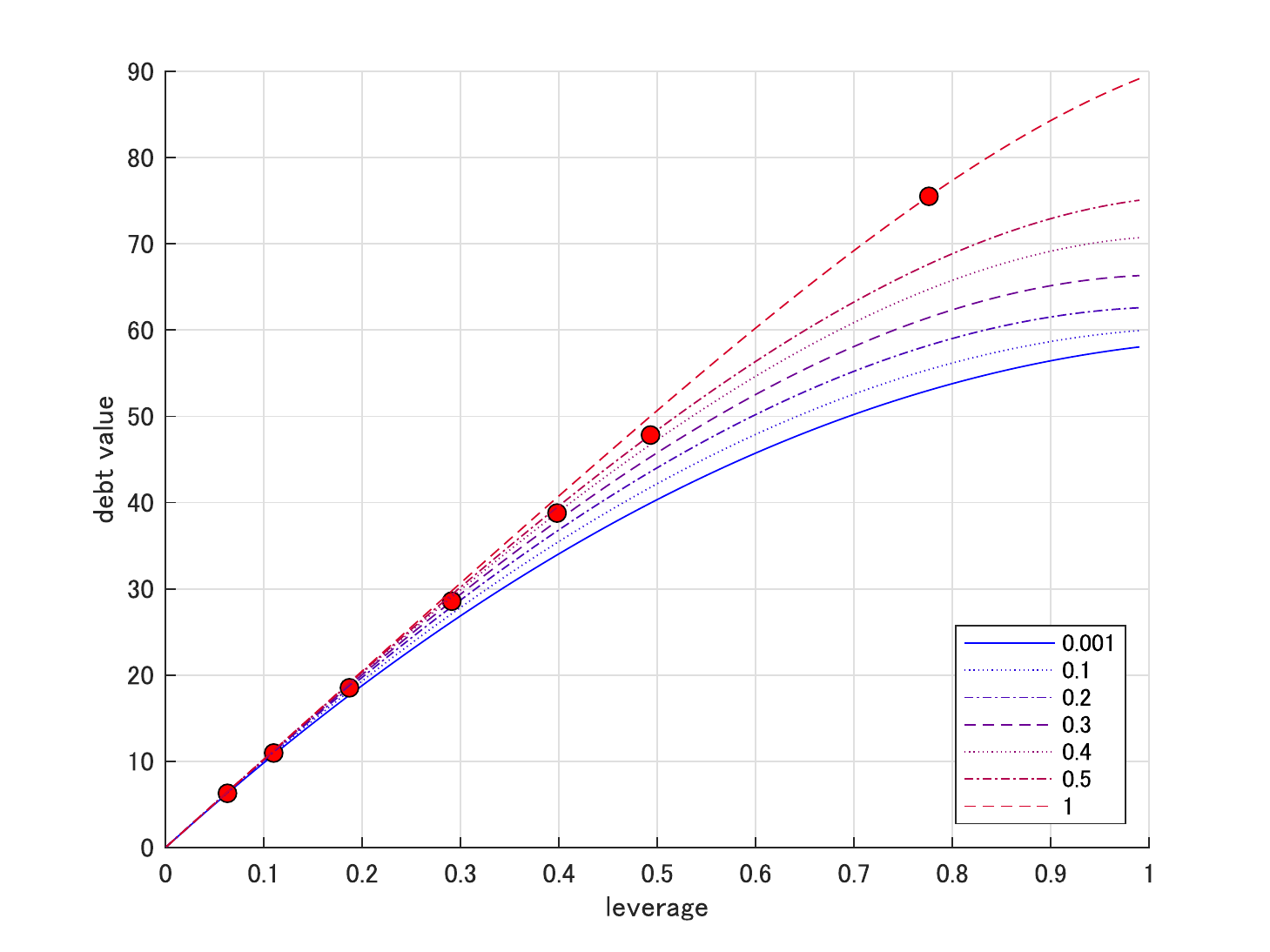} \\
\textbf{Continuous}:  firm value  & \textbf{Continuous}:  debt value  \\
\includegraphics[scale=0.5]{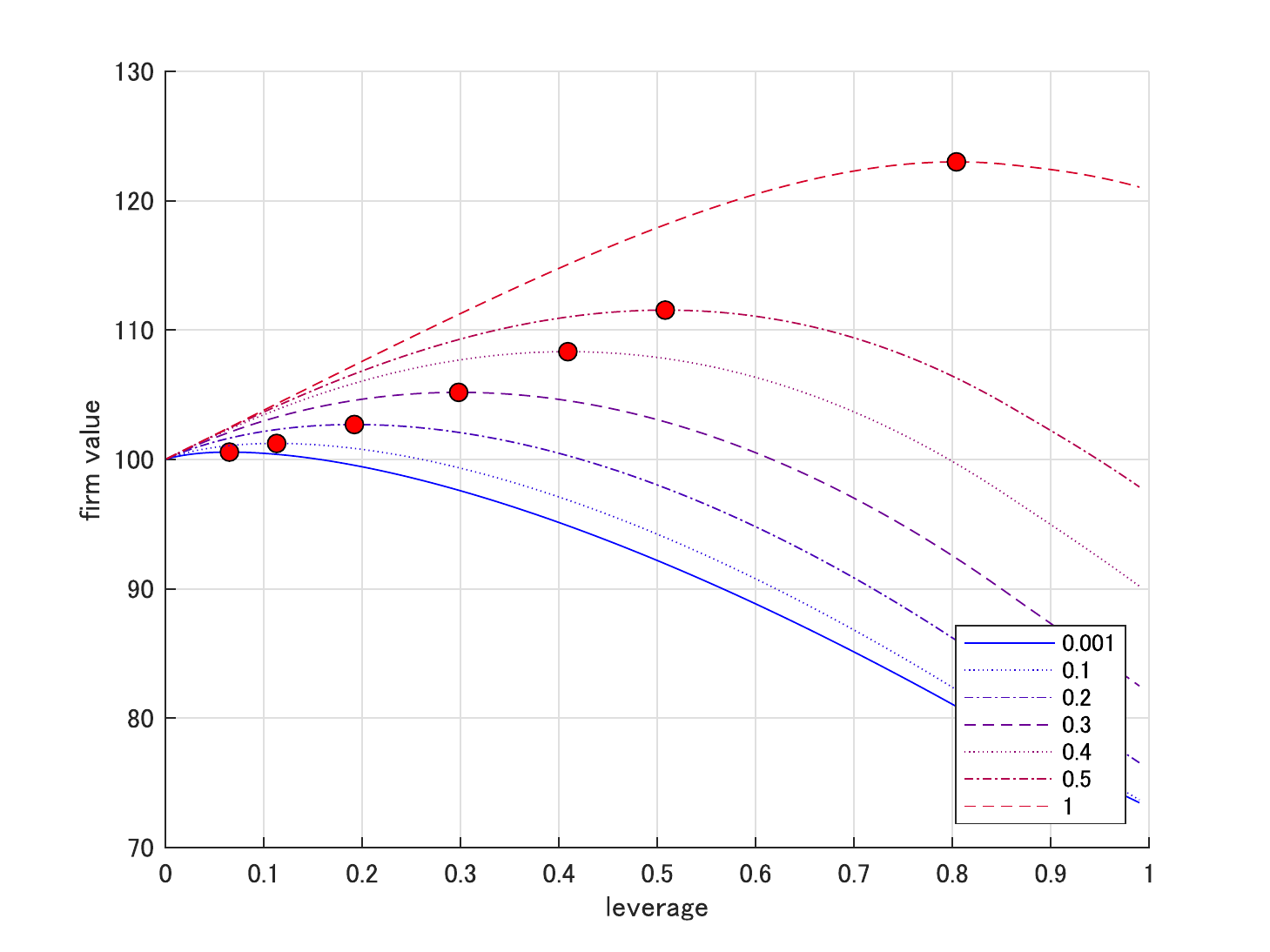}  & \includegraphics[scale=0.5]{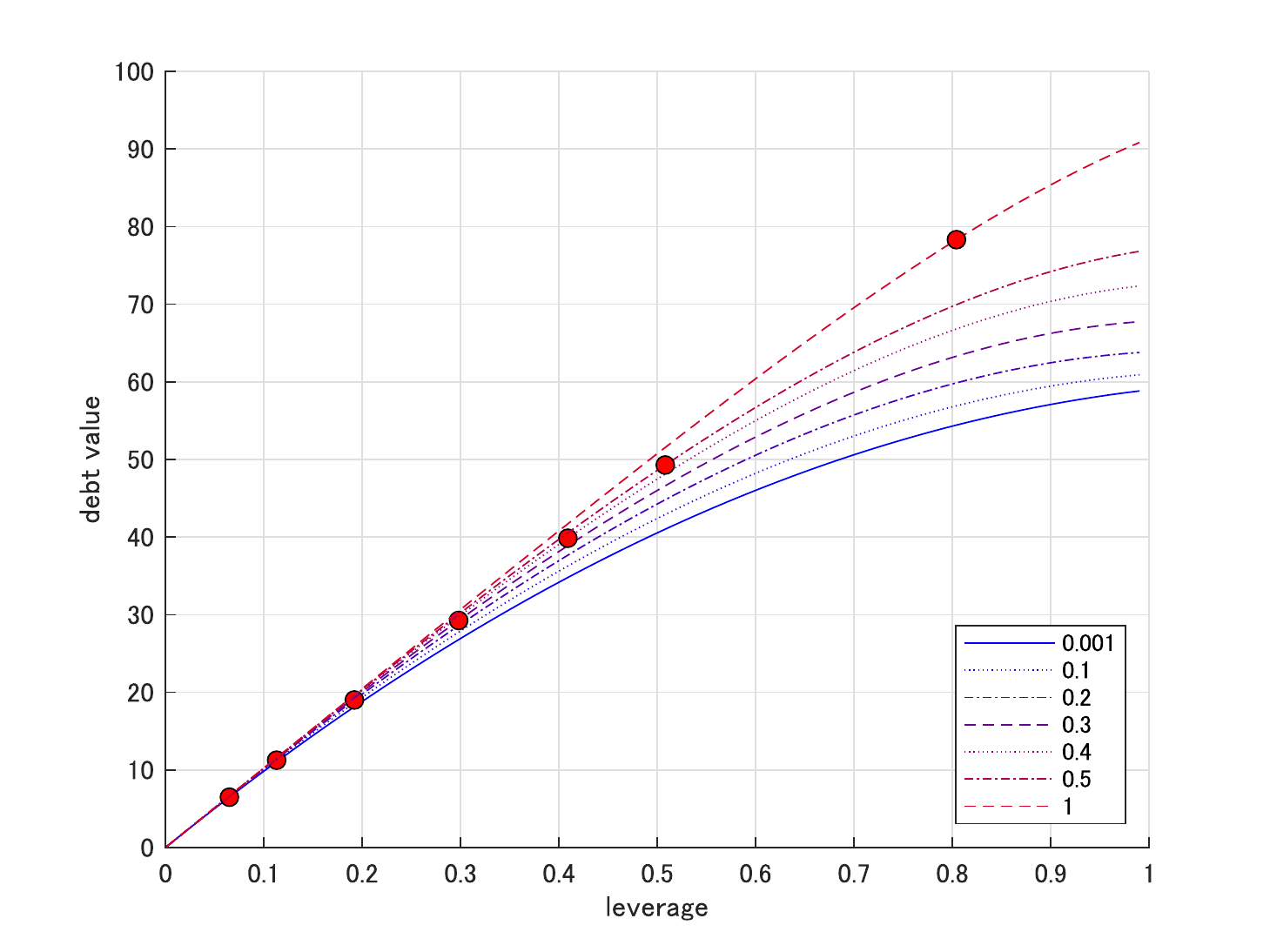}  \\
\textbf{Periodic}:  firm value  &\textbf{Periodic}: debt value  \\
\end{tabular}
\end{minipage}
\caption{\small{
The firm (left) and debt (right) values with respect to  the leverage $P/V$ for the two-stage problem when $V = 100$ for  $\gamma = 0.001,0.1,0.2,0.3,0.4,0.5,1$ (top for the continuous-observation case and bottom for the periodic-observation case). 
The points at the optimal leverage $P^*/V$ are shown by the circles.}}
 \label{figure_two_stage_jumps}
\end{center}
\end{figure}

\section{Concluding Remarks}\label{Sec:Final}


In this paper, we studied the Leland-Toft optimal capital structure model when the asset value follows a spectrally positive \lev process. We obtained explicitly the optimal endogenous bankruptcy level in both the continuous- and periodic-observation cases, written in terms of the scale function. The optimal capital structure was then obtained by considering the two-stage optimization problem. Our numerical results show that positive jumps of the asset value have significant impact on the firm's decision-making. In particular, the optimal bankruptcy barrier fails to be monotone in the rate of positive jumps.

There are various venues for future research. By considering positive jumps of the asset value, this paper complements the results obtained in the spectrally negative model as in \cite{HilbRog02, Kyprianou2007, surya2014optimal}. However, the assumption of no negative jumps has a drawback on the computation of credit spread,
it tends to converge to zero as the maturity approaches zero. In order to analyze the credit spread more accurately, as done in \cite{ChenKou09}, it is desirable to consider the cases with both positive and negative jumps. While a number of detailed analysis has been done in \cite{ChenKou09}, it is of great interest to consider more general jump distributions and also periodic-observation cases. This may be  possible for some \lev processes such as the phase-type \lev process \cite{asmussen2004russian} and the meromorphic \lev process \cite{kuznetsov2012meromorphic}.


It is also of interest to consider other models of bankruptcy. The bankruptcy in the periodic-observation model considered in this paper can be seen as the Parisian ruin with exponential delay as discussed in the introduction of \cite{palmowski2019leland}. Bankruptcy is triggered at the first time the asset value stays continuously below the bankruptcy barrier for an  independent exponential time.  In this setting, the ``distress level'' of the firm is reset to zero each time the asset value goes above the barrier (before the exponential clock rings). On the other hand, as considered in \cite{moraux2002valuing}, it is important to also consider the case the distress level accumulates (while the asset value is below the level) without being reset. For this extension, our approach using the fluctuation theory can potentially be used to obtain explicit solutions.

\appendix
\section{Review of fluctuation identities for spectrally negative L\'evy processes}\label{app_fi}

	In this section we review the fluctuation identities for \textit{spectrally negative} L\'evy processes. Let  $Y:=-X$ be the dual of $X$, and let $\tilde{\E}_x$ be the law of $Y$ when $Y_0 = x$ (in particular, $\tilde{\E} = \tilde{\E}_0$).

Let
\[ \tilde{\tau}_{-a}^{+} := \inf\lbrace t>0: Y_t > -a \rbrace,\qquad a\in\R.\]
By the duality relation between $Y$ and $X$, 
$\tilde{\tau}_{-a}^{+}$ under $\tilde{\E}$ has the same law as $\tau_a^-$ defined in \eqref{fptc} under $\E$.

We also recall the first passage time below a level $a$ under Poissonian observations defined in \eqref{Sp_Pos_Poisson_Passage}, which is equal in distribution to the first \textit{observed} passage time above level $-a$  of the dual process
\[
\tilde{T}_{-a}^+ = \inf\lbrace T \in \T: Y_T > -a \rbrace, \qquad a\in\R.
\]

Fix $q\geq0$. The first identity is related to the first passage time above a level, and so as in Theorem 3.12 in \cite{kyprianou2014fluctuations} we have, for $x \leq 0$,
\begin{equation}\label{Thm:Pass:Time:Cont}
\tilde{\E}\left[ e^{-q \tilde{\tau}_{-x}^{+}} ; \tilde{\tau}_{-x}^{+} < \infty \right] = e^{\Phi(q) x}.
\end{equation}
In addition, as in Theorem 2.7 in \cite{Kuznetsov2013} the \textit{$q$-resolvent measure} has a density written, for $x\leq b$, as
\begin{equation}\label{Thm:Res:Dens:Cont}
\tilde{\E}_x \left[ \int_0^{\tilde{\tau}_b^{+}} e^{-qt} \Ind_{\lbrace Y_t \in dy \rbrace} dt \right] =  \left[ e^{-\Phi(q)(b-x)} W^{(q)}(b-y) - W^{(q)}(x-y) \right] dy.
\end{equation}
For the case of periodic observations we have by Theorem 3.1 in \cite{albrecher2016} that, for $\theta \geq 0$,
\begin{equation}\label{Thm:Pass:Time:Per}
\tilde{\E}_x \left[ e^{-q \tilde{T}_a^+ - \theta (Y_{\tilde{T}_a^+} - a)} ; \tilde{T}_a^+ < \infty \right] = \frac{\Phi(\lambda + q) - \Phi(q)}{\Phi(\lambda + q) + \theta} e^{- \Phi(q)(a-x)},\qquad x\leq a,
\end{equation}
and by Theorem 3.1 in \cite{LANDRIAULT2018152}, for $x \leq 0$ and $y \in \mathbb{R}$, 
\begin{equation}\label{Thm:Res:Dens:Per}
\tilde{\E}_x \left[ \int_0^{\tilde{T}_0^+} e^{-qt} \Ind_{\lbrace Y_t \in dy \rbrace} dt \right] =  R^{(q,\lambda)}(x,y) dy,
\end{equation}
where, for $x\leq 0$,
\begin{equation}\label{Res:Dens:PO}
R^{(q,\lambda)}(x,y) :=  \left( \frac{\Phi(\lambda + q) - \Phi(q)}{\lambda} \right) e^{\Phi(q) x} Z^{(q)}(-y;\Phi(\lambda + q)) - W^{(q)}(x-y).
\end{equation} 

\section{Proofs} 
\subsection{Proof of Lemma \ref{Lem:1st:FI}.}
Since $Y=-X$, we can rewrite the expectation in \eqref{1st:Fluc:Id} as
\begin{align*}
\E_x \left[ e^{-q\tau_0^{-} } ; \tau_0^{-} < \infty \right] &= \tilde{\E}_{-x}\left[ e^{-q\tilde{\tau}_0^{+} } ; \tilde{\tau}_0^{+} < \infty \right]= \tilde{\E}\left[ e^{-q\tilde{\tau}_x^{+}} ; \tilde{\tau}_x^{+} < \infty \right].
\end{align*}
Now, by \eqref{Thm:Pass:Time:Cont}, the proof is complete.

\subsection{Proof of Lemma \ref{2nd:Fluc:Id_lemma}.}
First, we use duality to express the expectation in the definition of $g$ as
\begin{align*}
g(x;q,a) = \tilde{\E}_{-x}\left[ \int_0^{\tilde{\tau}_0^+} e^{-qt} \Ind_{ \lbrace Y_t\leq -a \rbrace} dt \right]= \tilde{\E}_{-x}\left[ \int_0^{\tilde{\tau}_0^+} e^{-qt} (1 - \Ind_{ \lbrace Y_t > -a \rbrace } )dt \right] \nonumber.
\end{align*}
We have
\begin{align}
\tilde{\E}_{-x}\left[ \int_0^{\tilde{\tau}_0^+} e^{-qt} dt \right] &= \tilde{\E} \left[ \int_0^{\tilde{\tau}_x^+} e^{-qt} dt \right]=\frac{1}{q} \tilde{\E} \left[ 1- e^{-q \tilde{\tau}_x^+} \right]= \frac{1}{q} \left( 1- e^{-\Phi(q)x} \right). \label{FI:Aux1}
\end{align}
On the other hand, by using the resolvent density as in \eqref{Thm:Res:Dens:Cont}, we obtain that the second integral is given by
\begin{align}
\tilde{\E}_{-x}\left[ \int_0^{\tilde{\tau}_0^+} e^{-qt} \Ind_{ \lbrace Y_t > -a \rbrace } dt \right] &= \tilde{\E} \left[ \int_0^{\tilde{\tau}_x^+} e^{-qt} \Ind_{ \lbrace Y_t > x -a \rbrace} dt \right] \nonumber\\
&= \int_{-\infty}^{x} \Ind_{\lbrace u > x -a \rbrace } \left( e^{-\Phi(q)x} W^{(q)}(x-u) - W^{(q)}(-u) \right) du \nonumber\\
&= e^{-\Phi(q)x} \overline{W}^{(q)}(a) - \overline{W}^{(q)}(a-x). \label{FI:Aux2}
\end{align}
The result follows after combining expressions \eqref{FI:Aux1} and \eqref{FI:Aux2}.
\subsection{Proof of Lemma \ref{1st:FI:Per_lemma}.} \label{proof_1st:FI:Per_lemma}

By the duality relationship between $X$ and $Y$, we have 
\[
\E_x \left[ e^{-q T_0^- + \beta X_{T_0^{-}} } ; T_0^{-} < \infty \right] = \tilde{\E}_{-x} \left[ e^{-q \tilde{T}_0^+ - \beta Y_{\tilde{T}_0^{+}} } ; \tilde{T}_0^{+} < \infty \right].
\]
Now the proof is complete by \eqref{Thm:Pass:Time:Per}.
\subsection{Proof of Proposition \ref{2nd:FI:Per_lemma}.} \label{appendix_2nd:FI:Per_lemma}
(i) Suppose $V_T > 0$.
First, by the definition and using duality, we can write 
\begin{align*}
\Lambda^{(q,\lambda)} (x,z) &= \E_x \left[ \int_0^{T_z^-} e^{-qt} ( 1 - \Ind_{\lbrace X_t < \log V_T \rbrace} ) dt \right] 
= A_1(x ,z) - A_2(x, z),
\end{align*}
where
\begin{align}
A_1(x,z) &:= \E_x \left[ q^{-1} \left( 1-e^{-q T_z^-} \right) \right] \quad \textrm{and} \quad A_2(x,z) := \tilde{\E}_{-x} \left[ \int_0^{\tilde{T}_{-z}^+} e^{-qt} \Ind_{\lbrace Y_t > - \log V_T \rbrace} dt \right]. \nonumber
\end{align}
We can use the identity \eqref{1st:FI:Per} to compute $A_1$ which yields
\begin{align}
A_1(x ,z)
&= q^{-1}\left( 1- J^{(q,\lambda)}(x-z;0) \right).\label{Tax:Aux1}
\end{align}
For $A_2$, we write the expectation in terms of the resolvent density.  By applying  \eqref{Thm:Res:Dens:Per}, we have 
\begin{align*}
A_2(x ,z)= \tilde{\E}_{z-x} \left[ \int_0^{\tilde{T}_{0}^+} e^{-qt} \Ind_{\lbrace Y_t > z - \log V_T \rbrace} dt \right]&=\int_{-z_T}^\infty
R^{(q,\lambda)}(z-x,y)dy
= \int_{-\infty}^{z_T} R^{(q,\lambda)}(z-x,-y)dy, \nonumber
\end{align*}
where $z_T := \log V_T -z$, and for $z-x\leq 0$ the resolvent density is given by \eqref{Res:Dens:PO}. Substituting the resolvent density 
gives
\begin{equation}\label{Tax:Aux:Sub}
\begin{split}
\int_{-\infty}^{z_T} R^{(q,\lambda)}(z-x,-y)dy &= \frac{\Phi(\lambda +q)-\Phi(q)}{\lambda}
e^{\Phi(q)(z-x)}
\int_{-\infty}^{z_T} Z^{(q)}(y;\Phi(\lambda +q))dy \\
&- \int_{-\infty}^{z_T} W^{(q)}(z-x+y) dy.
\end{split}
\end{equation}
By the change of variable,
\begin{align}
\int_{-\infty}^{z_T} Z^{(q)}(y;\Phi(\lambda +q))dy &= \frac{1}{\Phi(\lambda + q)} \left( e^{\Phi(\lambda + q) z_T} - \lambda \int_0^{z_T} e^{-\Phi(\lambda + q)u} W^{(q)}(u) (e^{\Phi(\lambda + q)z_T} - e^{\Phi(\lambda + q)u}) du \right) \nonumber\\
&= \frac{1}{\Phi(\lambda + q)} \left( \lambda \overline{W}^{(q)}(z_T) + Z^{(q)}(z_T; \Phi(\lambda +q)) \right),\label{Tax:Aux2}
\end{align}
and
\begin{equation}\label{Tax:Aux:Conv}
\int_{-\infty}^{z_T} W^{(q)}(z-x+y) dy = \int_{0}^{z-x+z_T} W^{(q)}(u) du=\overline{W}^{(q)}(z-x+z_T) =\overline{W}^{(q)}(\log V_T - x). %
\end{equation}
Substituting \eqref{Tax:Aux2} and \eqref{Tax:Aux:Conv} back into \eqref{Tax:Aux:Sub} yields 
\begin{align}
A_2(x ,z) = \frac{\Phi(\lambda +q)-\Phi(q)}{\lambda \Phi(\lambda + q)}
e^{\Phi(q)(z-x)}
\left( \lambda \overline{W}^{(q)}(z_T) +  Z^{(q)}(z_T; \Phi(\lambda +q)) \right)-  \overline{W}^{(q)}(\log V_T - x) .
 \label{Tax:Res:Final}
\end{align}
Now, the result follows after substracting \eqref{Tax:Res:Final} from \eqref{Tax:Aux1}.

(ii) On the other hand, if $V_T=0$ then $ \Ind_{ \lbrace X_t < \log V_T \rbrace } =  0$ for all $t \geq 0$ and hence
\begin{align*}
\Lambda^{(q,\lambda)}(x,z) =A_1 (x,z)= 
\frac{1}{q} \left( 1 - J^{(q,\lambda)}(x-z;0) \right).
\end{align*}
\bibliographystyle{siam}
\bibliography{CapitalStructure_SpectrallyPositive}
\end{document}